\newtheorem{theorem}{Theorem}[section]
\newtheorem{definition}{Definition}[section]
 \newtheorem{lemma}{Lemma}[section]
 \newtheorem{remark}{Remark}[section]
\numberwithin{equation}{section}
\theoremstyle{definition}
\theoremstyle{remark}
\numberwithin{equation}{section}
\DeclareMathSymbol{\C}{\mathalpha}{AMSb}{"43}
\newcommand{\bsub}{\begin{subequations}}
\newcommand{\esub}{\end{subequations}$\!$}
\begin{document}

\title{ Linear Response Theory for Nonlinear Stochastic Differential Equations with $\alpha$-stable L\'{e}vy Noises}

\author{ Qi Zhang\thanks{Department of Applied Mathematics, Illinois Institute of Technology, Email:qzhang82@hawk.iit.edu}, Jinqiao Duan\thanks{Department of Applied Mathematics, Illinois Institute of Technology, Email:duan@iit.edu}}

%\date{\date}

\smallbreak \maketitle

\begin{abstract}
We consider a nonlinear stochastic differential equation driven by an $\alpha$-stable L\'{e}vy process ($1<\alpha<2$). We first obtain some regularity results for the probability density of its invariant measure via establishing the a priori estimate of the corresponding stationary Fokker-Planck equation. Then by the a priori estimate of Kolmogorov backward equations and the perturbation property of Markov semigroup, we derive the response function and generalize the famous linear response theory in nonequilibrium statistical mechanics to non-Gaussian stochastic dynamic systems.
\end{abstract}

\vskip 0.2truein

\noindent {\it Keywords:} Linear response theory; Invariant measure; Fokker-Planck equations; $\alpha$-stable L\'{e}vy process.

\vskip 0.2truein

\section{Introduction}
We consider a stochastic dynamical system described by a stochastic differential equation (SDE) on $\mathbb{R}^n$:
\begin{equation}\label{Lequ}
 dX_t = b(X_{t})dt + \sigma(X_{t^{-}})dL_t,
\end{equation}
where $b(x):\mathbb{R}^n \rightarrow \mathbb{R}^n$ and $\sigma(x)=(\sigma_{ij}(x))_{n\times n}:\mathbb{R}^n \rightarrow  \mathbb{R}^{n \times n}$ are Borel measurable functions, and $L_t$ is a $n$-dimensional $\alpha$-stable L\'{e}vy process on the filtered probability space $(\Omega,\mathcal{F},(\mathcal{F}_t)_{t\geq 0},\mathbb{P})$ with $1<\alpha<2$. Assume the SDE (\ref{Lequ}) is ergodic with the unique invariant distribution $\mu$. If $\mu$ be the initial distribution of $(X_t)_{t\geq 0}$, then $(X_t)_{t\geq 0}$ is a stationary Markov process. In physics, $(X_t)_{t\geq 0}$ is a stationary Markov process means that the corresponding stochastic dynamical system is in a steady state (including the equilibrium state and nonequilibrium steady state).

In recent years, stochastic dynamical systems with L\'{e}vy noises have attracted a lot of attention in many areas, including  modeling the DNA-target search for binding sites \cite{SNG2011}, and active transport within cells \cite{LVS2015}. Signatures of L\'{e}vy noise and anomalous transport have
been found ubiquitous in nature.
In anomalous transport, the particle undergoing L\'{e}vy super diffusion is performing motion with random jumps following a power-law distribution. In complex physical systems, many experimental works demonstrate that the distribution of various fluctuations is also heavy-tailed typical of L\'{e}vy-type distributions \cite{KRS2008}.

In statistical physics, the fluctuation-dissipation theorem holds for dissipative systems near the equilibrium states, and is a useful tool in investigation of physical properties of systems at thermodynamic equilibrium. It connects the energy dissipation in an irreversible process to the thermal fluctuation in equilibrium through suitable correlation functions.
This is explained in the following Langevin equation
\begin{equation*}
    \dot{x}=v, \quad m\dot{v}= - \gamma v+\sqrt{2D} \dot{W}(t),
\end{equation*}
where $x$ is the position of one particle with velocity $v$, $\gamma >0$ is friction coefﬁcient, $D$ is diffusion coefficient, and $\dot{W}(t)$ is a Gaussian white noise which could be understood as a random force. The fluctuation-dissipation  theorem \cite{Kubo196,Z2001,ZQQ2012} provides a precise connection between the dissipation term $\gamma$ and the fluctuation term $\sqrt{2D}\dot{W}(t)$, such that the covariance satisfies
\begin{equation}\label{FDTG}
    D\mathbb{E}(\dot{W}(t)\dot{W}(s))= k_BT\gamma \delta(t-s),
\end{equation}
where $k_B$ is the Boltzmann constant and $T$ is the absolute temperature, leading to the important Einstein relation $D=k_B T\gamma$. By virtue of the fluctuation-dissipation theorem, measurable macroscopic physical quantities like the average kinetic energy or susceptibilities can be related to correlation functions of spontaneous flﬂuctuations.

On the other hand, many works \cite{CG2008,CJ2020,Dy2012,PJP2009,ZQQ2012} indicate that the classical fluctuation-dissipation theorem is a special case of a more general fluctuation relation, and it is still hold for many different non-equilibrium systems. This fluctuation relation can be described by the linear response theory.
The linear response theory can be viewed as a generalization of the well-known fluctuation-dissipation theorem when systems near steady states. Moreover, it is valid under more general conditions with many variables, including positions, velocities, concentrations, and order parameters.

Let us review the linear response theory introduced in \cite{PJP2009,ZQQ2012}. The theorem only requires that the system $(X_t)_{t\geq 0}$ is a Markov process with a invariant measure. Consider a stochastic dynamics system in steady state, i.e. the initial distribution of $(X_t)_{t\geq 0}$ is a invariant measure of Markov semigroup, and $(X_t)_{t\geq 0}$ is a stationary Markov process. And an small external perturbation is applied to the system. Let $X^{ F}_t$ be the perturbed process. Given an arbitrary observable $O(x)$, the response (evaluated to first order in the perturbation) can be written as
\begin{equation}\label{resf}
     \mathbb{E}O(X^{F}_t)-\mathbb{E}O(X_{0})\approx \int_0^t \mathcal{R}_{O}(t-s) F(s) ds,
\end{equation}
where $\mathcal{R}_{O}(t)$ is the time-dependent susceptibility of variable $O$, and is called response function in linear response theory.
The linear response theory states the relationship between the response function and a cross-correlation function
\begin{equation}\label{resrelation}
    \mathcal{R}_{O}(t) =\frac{d}{dt}\mathbb{E}(O(X^F_t)U(X_{0})),
\end{equation}
where $U(x)$ is the variable conjugate with respect to the perturbation. The linear response theory reveals the susceptibility of every observable when the stochastic dynamic system closed to the steady state and then the response to an small time-dependent perturbation.

The mathematical formulation of linear response theory for dissipative stochastic dynamical systems are considered in many works. Dembo and Deuschel \cite{DD2010} have developed the mathematical theory of linear response theory for homogenous Markov processes based on the methods of strongly continuous semigroups and Dirichlet forms. Chen and Jia \cite{CJ2020} provided rigorous mathematical proofs of linear response theory and the Agarwal-type fluctuation-dissipation theorem for a stochastic differential equation deriven by a Brownian motion with unbounded coefficients and a general perturbation.
In recent years, some physicists begin to consider the linear response theory to stochastic differential equations driven by L\'{e}vy processes. In \cite{Dy2012}, the anthers considered the linear stochastic differential equations with stable L\'{e}vy noise and constant coefficients and established the linear response theory. The linear response theory and Onsagers fluctuation theory to linear stochastic differential equations driven by a Gaussian noise and a Cauchy noise have been studied in \cite{KDG2018,KESG2013}.

In this present paper, we study the linear response theory for nonlinear stochastic differential equations driven by an $\alpha$-stable L\'{e}vy process ($1<\alpha<2$) with rigorous mathematical formulation. We assume that there is a perturbation $F(t)K(x)$ to the drift term, where $F(t)\in L^{\infty}(\mathbb{R}^{+})$, $ K(x), \text{div}(K(x)) \in  L^{\infty}(\mathbb{R}^n)$, and $\|F\|_{L^{\infty}} \ll 1$. Under the external perturbation $F(t)K(x)$, the perturbed process $X^{F}_t$ is the solution of following stochastic differential equation
\begin{equation}\label{LequU}
  \left\{
   \begin{aligned}
   & dX^{ F}_t = (b(X^{ F}_t)+ F(t)K(X^{ F}_{t}))dt + \sigma(X^{ F}_{t^{-}})dL_t,\\
   & X^{ F}_0=X_{0},
   \end{aligned}
   \right.
\end{equation}
where the distribution of $X_{0}$ is an invariant measure of the SDE (\ref{Lequ}). We prove that the fluctuation relation (\ref{resrelation}) is true for the SDE (\ref{Lequ}).

The main tools to establish the linear response theory is the Markov semigroup and Kolmogorov backward equations. We obtain the perturbation property of the corresponding Markov semigroup by the a priori estimate of the Kolmogorov backward equation. Then we establish the linear response theory and the Agarwal-type fluctuation-dissipation theorem for SDE (\ref{Lequ}. In the present paper, combine with nonlocal heat kernel estimates, we also prove a new ergodicity result of SDE (\ref{Lequ}) by the Bogoliubov-Krylov argument. Moreover, we derive a new form of Fokker-Planck equation associated with the SDE (\ref{Lequ}), and establish regularity for the density of invariant measure of SDE (\ref{Lequ}) via to establish the aprior estimate for the corresponding stationary Fokker-Planck equation.

This paper is organized as follows. In section 2, we revisit some basic notation and definitions of the SDE driven by an $\alpha$-stable L\'evy process, and introduce some well-posedness and ergodicity results for the SDE (\ref{Lequ}) from \cite{CZZ2017,XZ2020}. In section 3, we prove the ergodicity of SDE (\ref{Lequ}) and the existence and uniqueness of invariant measure by the Bogoliubov-Krylov argument. Then we derive the Fokker-Planck equation associated with the SDE (\ref{Lequ}) and establish regularity results for the invariant measure. In section 4, we obtain the response function, and establish the linear response theory as Theorem \ref{LRThm}. In addition, the Agarwal-type fluctuation-dissipation theorem for SDE (\ref{Lequ}) is also obtained as Theorem \ref{AFDT}. The paper ends with some summary and discussions in section 5.

\section{Preliminaries}
In this section, we recall some basic notation and definitions. After making some assumptions, we introduce a well-posedness result of SDE (\ref{Lequ}) and the corresponding Kolomogrov equation. In the end, we recall some basic notions about suitable invariant measure and ergodicity, and make the dissipativity assumption for SDE (\ref{Lequ}).
\subsection{Basic notations and definitions}
We first introduce some spaces and notations.
For $p\in [1,\infty]$, let $L^p(\mathbb{R}^n)$ be the usual Lebesgue space of all Borel functions on $\mathbb{R}^n$ with $L^p$ norm.
For $0<\alpha \leq 2$ and $1<p<\infty$, let $H^{\alpha}_p(\mathbb{R}^n)$ be the usual Bessel potential space with the norm
\begin{equation*}
  \|f\|_{H^{\alpha}_p}=\|((I-\Delta)^{\frac{\alpha}{2}})^{-1}f\|_{L^p},
\end{equation*}
where $(I-\Delta)^{\frac{\alpha}{2}}$ and $\Delta^{\frac{\alpha}{2}}$ are defined by
\begin{equation*}
  (I-\Delta)^{\frac{\alpha}{2}}f:=\mathcal{F}^{-1}((1+|\cdot|^2)^{\frac{\alpha}{2}}\mathcal{F}f), \quad (-\Delta)^{\frac{\alpha}{2}}f:=\mathcal{F}^{-1}(|\cdot|^{\alpha}\mathcal{F}f).
\end{equation*}
When $p\in [1,\infty]$ and $\alpha \in \mathbb{N}$, $H^{\alpha}_p(\mathbb{R}^n)$ is denoted for usual Sobolev space with the norm
\begin{equation*}
  \|f\|_{H^{\alpha}_p}=(\sum_{|\theta|\leq \alpha}\|\partial^{\theta}f \|^p_{L^p})^{\frac{1}{p}}.
\end{equation*}
We recall the following Sobolev embedding. Let $0\leq \theta \leq\alpha\leq 2$, and let $1 \leq p \leq q \leq \infty$ such that $\theta-\frac{n}{q}<\alpha-\frac{n}{p}.$
Then $H^{\alpha}_p(\mathbb{R}^n) \hookrightarrow H^{\theta}_q(\mathbb{R}^n)$.

Let $X$ denote a Banach space. Let $L(X)$ be the Banach space of linear bounded operators from $X$ to $X$. For every $p \in [1,\infty]$ and $0\leq s<t \leq \infty$, the space $L^p(s,t;X)$ consists of all strongly measurable $u:[s,t]\rightarrow X$ with
\begin{equation*}
  \|u\|_{L^p(s,t;X)}:=\left(\int^t_s \|u(r)\|^p_X dr \right)^{\frac{1}{p}}<\infty
\end{equation*}
for $1\leq p \leq \infty$, and
\begin{equation*}
  \|u\|_{L^{\infty}(s,t;X)}:=\text{esssup}_{s\leq r \leq t}\|u(r)\|_X <\infty.
\end{equation*}

Now we recall some basic facts for $\alpha$-stable L\'{e}vy processes from \cite{A2004,D2015}. Let $(\Omega,\mathcal{F},\mathbb{P})$ be a filtered probability space satisfying the usual conditions.
Consider the $n$ dimensional $\alpha$-stable L\'{e}vy process $L_t$ on $(\Omega,\mathcal{F},(\mathcal{F}_t)_{t\geq 0},\mathbb{P})$ with $1<\alpha<2$.
The characteristic function of $L_t$ is
\begin{equation*}
   \varphi_{L_t}(\xi)= \mathbb{E}\exp(i\langle \xi, L_t \rangle) = e^{t|\xi|^{\alpha}}.
\end{equation*}

For the $\alpha$-stable L\'{e}vy process $L_t$, the corresponding L\'{e}vy measure $\nu(dy) = \frac{c_{\alpha}dy}{|y|^{d+\alpha}} $. We denote by $N^L(dt,dy)$ the Poisson random measure associated to the pure jump-process $\Delta L_t =L_{t}-L_{t^-}$ such that $\mathbb{E}[N^L(dt,dy)] = dt\nu(dy)$, which is defined as
\begin{equation*}
    N^{L}(t,B)(\omega):= \sharp \{s\in [0,t]:\Delta L_{s}(\omega) \in B\}, \quad t\geq 0, B\in \mathcal{B}(\mathbb{R}^{n}\backslash \{0\}).
\end{equation*}
And the corresponding compensated Poisson random measure $\widetilde{N}^L$ is defined as
\begin{equation*}
   \widetilde{N}^{L}(dt,dy)=N^{L}(dt,dy)-dt\nu(dy) .
\end{equation*}
Then by L\'{e}vy-It\^{o} decomposition theorem, we have following path-wise description of $L_t$
\begin{equation*}
    L_t = \int_0^t \int_{0<|y|<1}y\widetilde{N}^L(dt,dy)+ \int_0^t \int_{|y|\geq 1}yN^L(dt,dy).
\end{equation*}

\subsection{SDE driven by $\alpha$-stable L\'evy process}
Consider the following stochastic differential equation on $\mathbb{R}^n$:
\begin{equation}\label{Lequt}
 dX_t = \hat{b}(t,X_{t})dt + \sigma(X_{t^{-}})dL_t,
\end{equation}
where $\hat{b}(t,x):\mathbb{R}^{+}\times\mathbb{R}^n \rightarrow \mathbb{R}^n$ and $\sigma(x)=(\sigma_{ij}(x))_{n\times n}:\mathbb{R}^n \rightarrow  \mathbb{R}^{n \times n}$ are Borel measurable functions, and $L_t$ is a $n$-dimensional $\alpha$-stable Levy process on $(\Omega,\mathcal{F},(\mathcal{F}_t)_{t\geq 0},\mathbb{P})$ with $1<\alpha<2$. Note that here the drift term $\hat{b}(t,x)$ is dependent of $t$ and $x$, and the form of (\ref{Lequt}) includes the SDE (\ref{Lequ}) and the perturbed SDE (\ref{LequU}). So the following results in this subsection are hold for these two SDEs.

The SDE (\ref{Lequt}) is equivalent to
\begin{equation}\label{Lequu}
 dX_t = \hat{b}(t,X_{t})dt + \int_{|y|<1}\sigma(X_{t^{-}})y\widetilde{N}^L(dt,dy)+\int_{|y|\geq 1}\sigma(X_{t^{-}})yN^L(dt,dy).
\end{equation}

We make the following assumptions on the drift coefficient $b$ and the diffusion coefficient $\sigma$.\\
{\bf (A)}(H\"{o}lder continuous) For all $(t,x) \in  \mathbb{R}^{+}\times\mathbb{R}^n$, there are $c_0 >0$ and $\beta \in (1-\alpha/2, 1)$ such that
\begin{equation*}
    |\hat{b}(t,x)-\hat{b}(t,y)|\leq c_0 |x-y|^{\beta}, \quad \|\sigma(x)-\sigma(y)\| \leq c_0 |x-y|^{\beta}.
\end{equation*}
Here and below, $\|\cdot\|$ denotes the Hilbert-Schmidt norm of a matrix, and $|\cdot|$ denotes the Euclidean norm.\\
{\bf (B)}(Uniform ellipticity) There exists a constant $\Lambda>0$ such that for all $x \in \mathbb{R}^n$,
\begin{equation*}
    \Lambda^{-1}|\xi| \leq |\sigma(x)\xi)| \leq \Lambda|\xi|, \quad \forall \xi\in \mathbb{R}^n.
\end{equation*}
{\bf (C)}(Uniform boundedness) There exists a constant $c_0>0$ such that for all $(t,x) \in  \mathbb{R}^{+}\times\mathbb{R}^n$,
\begin{equation*}
  |\text{div}(\hat{b}(x)|,|\hat{b}(t,x)|,|\nabla \sigma(x)|,|\nabla \sigma^{-1}(x)|< c_0.
\end{equation*}
The following well-posedness result of the SDE (\ref{Lequt}) is proved by Zhen, Zhang and Zhao (see \cite{CZZ2017}, Theorem 1.1).

\begin{theorem}\label{ExD}
Suppose that $({\bf A})$-$({\bf C})$ hold. Then for each $X_0=x \in \mathbb{R}^n$, the SDE (\ref{Lequt}) admits a unique strong solution $X_{t}$.
\end{theorem}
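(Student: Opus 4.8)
Since Theorem~\ref{ExD} is quoted from \cite{CZZ2017}, I only describe the route one would take to prove it. The coefficients $\hat b$ and $\sigma$ are merely H\"older continuous in $x$ (assumption (A)) rather than Lipschitz, so the classical Picard/contraction argument is unavailable, and one instead follows the Yamada--Watanabe program: (i) produce a weak solution of (\ref{Lequt}); (ii) establish pathwise uniqueness; (iii) conclude existence and uniqueness of a strong solution from (i)+(ii) via the Yamada--Watanabe theorem for jump SDEs.

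For step (i) I would mollify $\hat b$ and $\sigma$ into smooth coefficients $\hat b^{\eps},\sigma^{\eps}$ still satisfying (A)--(C) with constants independent of $\eps$, solve the regularized SDEs (which have strong solutions since their coefficients are now Lipschitz), and prove tightness of the laws of the solutions $X^{\eps}$ on the Skorokhod space. The bounds of (C) control the large-jump component of $L$ through interlacing, while uniform ellipticity (B) together with Krylov-type $L^p$ estimates for the nonlocal operator $\int_{\R^n}[f(x+\sigma(x)y)-f(x)-\mathbf 1_{|y|<1}\sigma(x)y\cdot\nabla f(x)]\nu(dy)$ supply the moment bounds needed for tightness and for identifying any subsequential limit as a solution of the martingale problem associated with (\ref{Lequt}).

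Step (ii) is the core. Adapting Zvonkin's transformation to the nonlocal setting, for $\lambda$ large I would study the ($\R^n$-valued) Kolmogorov equation
\begin{equation*}
 \partial_t u + \mathcal{L}^{\sigma}u + \hat b\cdot\nabla u - \lambda u = -\hat b \quad\text{on } [0,T]\times\R^n,\qquad u(T,\cdot)=0,
\end{equation*}
where $\mathcal{L}^{\sigma}$ is the nonlocal part of the generator appearing in (\ref{Lequu}). A priori Schauder (or Bessel-potential) estimates for this equation yield $H^{\alpha+\beta}_p$-type regularity for $u(t,\cdot)$ together with $\|\nabla u\|_{L^{\infty}}\to 0$ as $\lambda\to\infty$; hence, for $\lambda$ fixed large enough, $\Phi_t(x):=x+u(t,x)$ is for each $t$ a $C^1$ diffeomorphism of $\R^n$ with uniformly bounded inverse. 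Applying It\^o's formula for jump processes to $Y_t:=\Phi_t(X_t)$ cancels the irregular part of the drift and transforms (\ref{Lequt}) into an SDE for $Y_t$ whose drift and jump coefficients carry exactly the regularity required for the classical pathwise-uniqueness argument for L\'{e}vy-driven SDEs. Pathwise uniqueness for $Y_t$, plus invertibility of $\Phi_t$, gives pathwise uniqueness for $X_t$, and then Yamada--Watanabe closes the argument.

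The principal obstacle is the PDE input behind step (ii): Schauder-type a priori estimates for the nonlocal Kolmogorov equation when $\sigma$ is only H\"older continuous. One freezes the diffusion at a point $x_0$, treats the full equation as a perturbation of the constant-coefficient equation driven by $z\mapsto\sigma(x_0)z$, and must show that the remainder generated by the frozen-coefficient expansion of the $\sigma$-dependent jump kernel is of strictly lower order; this is precisely where the restriction $\beta\in(1-\alpha/2,1)$, i.e.\ $\alpha+2\beta>2$, is used. Coupled with the subcriticality $\alpha>1$, which makes the first-order drift $\hat b\cdot\nabla$ a lower-order perturbation of the order-$\alpha$ operator $\mathcal{L}^{\sigma}$, this remainder control is the technical heart of \cite{CZZ2017}.
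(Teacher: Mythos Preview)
Your proposal is appropriate: the paper does not prove Theorem~\ref{ExD} at all but simply cites it from \cite{CZZ2017}, Theorem~1.1, so there is no ``paper's own proof'' to compare against. Your sketch of the Yamada--Watanabe scheme combined with a Zvonkin-type transformation, together with Schauder/Bessel-potential estimates for the associated nonlocal Kolmogorov equation, accurately reflects the strategy of \cite{CZZ2017}, and your identification of the roles of the constraints $\alpha>1$ and $\beta\in(1-\alpha/2,1)$ is on point. In short, you have supplied more than the paper does, and what you supplied is correct in outline.
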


The stochastic process $(X_t)_{t\geq 0}$ is a Markov process with a Markov transition kernel
\begin{equation*}
    \pi_{s,t}(x,B):=\mathbb{P}(X_{t}\in B|X_{s}=x), \quad 0\leq s \leq t, x\in \mathbb{R}^n, B\in \mathcal{B}(\mathbb{R}^n).
\end{equation*}
We denote by $(P_{s,t})_{t-s\geq 0}$ the associated Feller semigroup of $(X_t)_{t\geq 0}$, i.e.
\begin{equation*}
    P_{s,t}f(x):=\int_{\mathbb{R}^n}f(y)\pi_{s,t}(x,dy)=\mathbb{E}(f(X_t)|X_s =x), \quad 0\leq s\leq t, \ x\in\mathbb{R}^n,
\end{equation*}
where $f\in \mathcal{B}_b(\mathbb{R}^n)$. If the Markov process is time-homogeneous, we denote $P_{s,t}=P_{t-s}$ for all $0\leq s\leq t$.  If the transition probability densities $p(s,x;t,y)$ exists, then
\begin{equation*}
    P_{s,t}f(x):=\int_{\mathbb{R}^n}f(y)p(s,x;t,y)dy.
\end{equation*}

The generator $A(s)$ of $P_{s,t}$ is the following integro-differential operator
\begin{align*}
    A(s)u(x) := & \hat{b}(s,x) \cdot \nabla u + \int_{\mathbb{R}^{n}\setminus \{0\}} [u(x+\sigma(x)y)-u(x)] \nu(dy),
\end{align*}
where $u \in Dom(A)\subset L^2(\mathbb{R}^n) $.

By It\^{o}'s formula, for each $f\in \mathcal{B}_b(\mathbb{R}^n)$ and $0\leq s\leq t$, the function $u(t,x):=\mathbb{E}[f(X_t)|B_s =x]$ satisfies the following Kolmogorov backward equation
\begin{equation}\label{backward}
  \left\{
   \begin{aligned}
   & \partial_s u(s,x) = -A_x(s)u(s,x),\\
   & u(t,x)=f(x).
   \end{aligned}
   \right.
\end{equation}
for all $s\in [0,t]$. Moreover, the transition density $p(s,x;t,y)$ of Markov process $(X_t)_{t\geq 0}$ is the fundamental solution of following Kolmogorov backward equation
\begin{equation}\label{Kbequ}
  \left\{
   \begin{aligned}
   & \partial_s u(s,x) = -A_x(s) u(s,x),\\
   & u(t,x)=\delta (x-y).
   \end{aligned}
   \right.
\end{equation}
 Here and below, $A_x$ denotes the operator $A$ act on functions of $x$.

Furthermore, for $0 \leq s < t\leq T$, if the probability density of $X_s$ is $p_s(x)$, then the probability density $p_t$ of the Markov process $(X_t)_{t\geq 0}$ is the solution of the following Kolmogorov forward equation, or Fokker-Planck equation
\begin{equation}\label{FKequ}
  \left\{
   \begin{aligned}
   & \partial_t u(t,x) = A_x^{\ast}(t) u(t,x),\\
   & u(s,x)=p_s(x),
   \end{aligned}
   \right.
\end{equation}
where the operator $A^{\ast}(t)$ is the adjoint operator of $A$ deﬁned through $\langle Af,g \rangle_{L^2}=\langle f,A^{\ast}g \rangle_{L^2}$. And the probability density $p_t$ of $X_t$ is given by
\begin{equation*}
   p_{t}(y)=\int_{\mathbb{R}^n}p_s(x)p(s,x;t,y)dx.
\end{equation*}

Moreover, since $b(t,x)$ is uniformly bounded, from \cite{CZ2018}, Theorem 1.5, the transition probability densities $p(s,x;t,y)$ of $X_t$ exists, and it enjoys the following estimates.
\begin{theorem}\label{tpdE}
Under ${\bf (A)}$-${\bf (C)}$, there is a unique continuous function $p(s,x;t,y)$ satisfying (\ref{Kbequ}), i.e. the transition probability densities $p(s,x;t,y)$ of $X_t$ exists. Moreover, $p(s,x;t,y)$  enjoys the following properties.\\
(i) (Two-sides estimate) For every $T > 0$, there are two positive constants $c_1,c_2$ such that for $0\leq s < t \leq T$ and $x,y\in \mathbb{R}^n$,
\begin{equation}\label{2sidesE}
    c_1 (t-s)((t-s)^{\frac{1}{\alpha}}+|x-y|)^{-n-\alpha}\leq p(s,x;t,y) \leq  c_2 (t-s)((t-s)^{\frac{1}{\alpha}}+|x-y|)^{-n-\alpha}.
\end{equation}
(ii) (Gradient estimate) For every $T > 0$, there exists a positive constant $c_3$ such that for $0\leq s < t \leq T$ and $x,y\in \mathbb{R}^n$,
\begin{equation}\label{GraE}
    |\nabla_x p(s,x;t,y)|\leq  c_3 (t-s)^{-\frac{1}{\alpha}}p(s,x;t,y).
\end{equation}
(iii) (Fractional derivative estimate) For every $\theta \in [0,\alpha)$, there exists a positive constant $c_4$ such that for $0\leq s < t \leq T$ and $x,y\in \mathbb{R}^n$,
\begin{equation}\label{FDE}
    |\Delta^{\theta/2}_x p(s,x;t,y)|\leq  c_4 (t-s)^{1-\frac{\theta}{\alpha}}((t-s)^{\frac{1}{\alpha}}+|x-y|)^{-n-\alpha}.
\end{equation}
(vi) (Continuity) For every bounded and uniformly continuous function $f(x)$, we have
\begin{equation}\label{Con}
    \lim_{|t-s|\rightarrow 0} \|\int_{\mathbb{R}^n}p(s,x;t,y)f(y)dy-f(x)\|_{L^{\infty}}=0.
\end{equation}
\end{theorem}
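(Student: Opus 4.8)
\medskip
\noindent\textit{Proof strategy.}
The plan is to construct $p(s,x;t,y)$ by Levi's parametrix method (freezing the coefficients) and then to read off the four properties from the construction. Since $1<\alpha<2$, the drift term $\hat b(s,x)\cdot\nabla$ is of strictly lower order than the nonlocal part of the generator $A(s)$, so the leading behaviour of the kernel is dictated by the jump operator with $\sigma$ frozen at a point. First I would fix $z\in\mathbb{R}^n$ and introduce the frozen operator $\mathcal{L}^z u(x):=\int_{\mathbb{R}^n\setminus\{0\}}[u(x+\sigma(z)y)-u(x)]\,\nu(dy)$, whose transition kernel $p^z(\tau,x)$ is the density of $\sigma(z)L_\tau$. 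From the uniform ellipticity (B) together with the classical sharp estimates for $\alpha$-stable densities one obtains, uniformly in $z$,
\[
 c\,\tau\,(\tau^{1/\alpha}+|x|)^{-n-\alpha}\le p^z(\tau,x)\le C\,\tau\,(\tau^{1/\alpha}+|x|)^{-n-\alpha},
\]
together with the scaling bounds $|\nabla_x p^z(\tau,x)|\lesssim\tau^{-1/\alpha}p^z(\tau,x)$ and $|\Delta_x^{\theta/2}p^z(\tau,x)|\lesssim\tau^{1-\theta/\alpha}(\tau^{1/\alpha}+|x|)^{-n-\alpha}$ for $\theta\in[0,\alpha)$. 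These are precisely the inequalities that must propagate to $p$.

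Next I would seek $p$ in the form
\[
 p(s,x;t,y)=p^{y}(t-s,x-y)+\int_s^t\!\!\int_{\mathbb{R}^n}p^{z}(r-s,x-z)\,\Phi(r,z;t,y)\,dz\,dr ,
\]
which forces $\Phi$ to solve the Volterra equation $\Phi=\Phi_0+\Phi_0\ast\Phi$, where $\ast$ is the space-time convolution and the parametrix error is
\[
 \Phi_0(s,x;t,y)=\bigl(A_x(s)-\mathcal{L}^{y}_x\bigr)p^{y}(t-s,x-y)+\hat b(s,x)\cdot\nabla_x p^{y}(t-s,x-y).
\]
The crucial estimate is $|\Phi_0(s,x;t,y)|\lesssim (t-s)^{-1+\gamma}\,\varrho_{t-s}(x-y)$ for some $\gamma>0$, where $\varrho_\tau(x):=\tau(\tau^{1/\alpha}+|x|)^{-n-\alpha}$: the drift term contributes the factor $(t-s)^{-1/\alpha}$, which is an \emph{integrable} singularity exactly because $\alpha>1$, while the nonlocal term is handled by expanding $p^{y}(\cdot+\sigma(x)y)-p^{y}(\cdot+\sigma(y)y)$ and pairing the H\"older modulus $\|\sigma(x)-\sigma(y)\|\le c_0|x-y|^{\beta}$ from (A) against the gradient and fractional-derivative bounds on $p^{y}$; the hypothesis $\beta\in(1-\alpha/2,1)$ is calibrated so that the resulting exponent $\gamma$ is strictly positive. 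Iterating then shows $\Phi=\sum_{k\ge1}\Phi_0^{\ast k}$ converges, with $\Gamma$-function coefficients, to a kernel obeying the same bound as $\Phi_0$; inserting it back, the correction integral is dominated by $\int_s^t(r-s)^{-1+\gamma}\,(\varrho_{r-s}\ast\varrho_{t-r})(x-y)\,dr\lesssim (t-s)^{\gamma}\varrho_{t-s}(x-y)$, which is of lower order than the leading term $p^{y}(t-s,x-y)$.

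From here the four claims follow. The two-sided estimate (\ref{2sidesE}) comes from the leading term for small $t-s$ (the lower-order correction being absorbed into $p^{y}$ via its lower bound) and, for general $0\le s<t\le T$, from the semigroup (Chapman--Kolmogorov) property. Differentiating the representation in $x$ and using $|\nabla_x p^z|\lesssim\tau^{-1/\alpha}p^z$ yields the gradient estimate (\ref{GraE}); applying $\Delta_x^{\theta/2}$ to the representation and using the corresponding bound on the frozen kernel yields (\ref{FDE}). Finally, (\ref{Con}) follows by writing $\int_{\mathbb{R}^n}p(s,x;t,y)f(y)\,dy-f(x)=\int_{\mathbb{R}^n}p(s,x;t,y)(f(y)-f(x))\,dy$ (conservativeness $\int_{\mathbb{R}^n}p(s,x;t,y)\,dy=1$ being guaranteed by the boundedness assumption (C), which precludes explosion) and then using (\ref{2sidesE}) together with the uniform continuity of $f$; uniqueness of the continuous fundamental solution of (\ref{Kbequ}) is obtained from a standard duality (or maximum-principle) argument for the nonlocal parabolic equation.

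The main obstacle is the parametrix-error bound $|\Phi_0|\lesssim(t-s)^{-1+\gamma}\varrho_{t-s}$ with $\gamma>0$ under merely H\"older-continuous $\sigma$ (and bounded $\hat b$): one must carefully balance the H\"older modulus $|x-y|^{\beta}$ against the fractional smoothing of $p^{y}$, converting part of $|x-y|^{\beta}$ into powers of $t-s$ and part into decay of the $\varrho$-profile, and it is precisely here that $\beta>1-\alpha/2$ is consumed. Everything downstream --- summing the Neumann series, bootstrapping via Chapman--Kolmogorov to all $t-s\le T$, and differentiating the kernel representation --- is routine, though lengthy.
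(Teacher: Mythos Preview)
Your parametrix (Levi) construction is the correct and standard route, but you should be aware that the paper does not actually prove this theorem: it is quoted verbatim from \cite{CZ2018}, Theorem~1.5 (Chen--Zhang, ``Heat kernels for time-dependent non-symmetric stable-like operators''), and no argument is given in the paper beyond that citation. So there is no ``paper's own proof'' to compare against here.

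That said, your sketch is precisely the method used in \cite{CZ2018}: freeze the jump coefficient, take the $\alpha$-stable kernel as zeroth-order approximation, control the parametrix error $\Phi_0$ via the H\"older regularity of $\sigma$ (this is where $\beta>1-\alpha/2$ enters) and the subcriticality $\alpha>1$ of the drift, sum the Neumann series, and then read off (i)--(iv) from the representation. Your identification of the key technical point --- turning $|x-y|^\beta$ into a gain of $(t-s)^\gamma$ with $\gamma>0$ in the bound for $\Phi_0$ --- is exactly right, and the downstream steps (Chapman--Kolmogorov bootstrap, differentiation under the integral, uniqueness by duality/maximum principle) are indeed routine. If you want to write this up fully rather than cite, the only caution is that the nonlocal part of $\Phi_0$ requires a somewhat delicate decomposition of the integral over small and large jumps, and the convolution inequality $\varrho_{r-s}\ast\varrho_{t-r}\lesssim\varrho_{t-s}$ (the ``3P''-type inequality for stable-type kernels) needs to be stated and used carefully; both are handled in \cite{CZ2018}.
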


Now from above estimates of the transition probability density, we have the following results of  solvability and
regularity of corresponding Kolomogrov equation.
By Minkowski's inequality for integral, the following result is a direct consequence of two-sides estimate (\ref{2sidesE}) and fractional derivative estimate (\ref{FDE}).

\begin{lemma}\label{EXuBack}
Assume that condition $({\bf A})$-$({\bf C})$ hold. Assume $f(x)\in L^p$ with some $1\leq p \leq \infty$. Let $\theta \in (0,\alpha)$. Then for $0\leq s \leq t$, the function $u(s,t):=P_{s,t}f(x) \in H^{\theta}_p$ is the unique solution to the Kolmogorov backward equation
\begin{equation*}
  \left\{
   \begin{aligned}
   & \partial_s u(s,x) = -A(s)u(s,x),\quad (s,x)\in [0,t)\times\mathbb{R}^n\\
   & u(t,x)=f(x),\quad x\in \mathbb{R}^n,
   \end{aligned}
   \right.
\end{equation*}
where $A(s)$ is the generator of SDE (\ref{Lequt}).
Moreover, there is a constant $C>0$ such that for all $0\leq s \leq t$,
\begin{equation*}
    \|u(s,x)\|_{L^p} \leq C \|f\|_{L^p}
\end{equation*}
and
\begin{equation*}
    \| u(s,x)\|_{H^{\theta}_p} \leq C (t-s)^{-\frac{\theta}{\alpha}}\|f\|_{L^p}.
\end{equation*}
\end{lemma}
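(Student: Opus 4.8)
The plan is to write $u(s,x)=P_{s,t}f(x)=\int_{\mathbb{R}^n}p(s,x;t,y)\,f(y)\,dy$ and to exploit that the right-hand sides of the two-sided estimate \eqref{2sidesE} and of the fractional-derivative estimate \eqref{FDE} are convolution kernels in $x-y$. Set $\rho_\tau(z):=c_2\,\tau\,(\tau^{1/\alpha}+|z|)^{-n-\alpha}$ and $\eta_\tau(z):=c_4\,\tau^{1-\theta/\alpha}\,(\tau^{1/\alpha}+|z|)^{-n-\alpha}$, so that \eqref{2sidesE} and \eqref{FDE} give $p(s,x;t,y)\le\rho_{t-s}(x-y)$ and $|\Delta_x^{\theta/2}p(s,x;t,y)|\le\eta_{t-s}(x-y)$. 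The one computation needed is the $L^1$-norm of these kernels: the scaling substitution $z=\tau^{1/\alpha}w$ gives $\int_{\mathbb{R}^n}(\tau^{1/\alpha}+|z|)^{-n-\alpha}\,dz=\kappa_{n,\alpha}\,\tau^{-1}$, where $\kappa_{n,\alpha}:=\int_{\mathbb{R}^n}(1+|w|)^{-n-\alpha}\,dw<\infty$ because the exponent exceeds $n$; hence $\|\rho_\tau\|_{L^1}=c_2\kappa_{n,\alpha}$ is independent of $\tau$, while $\|\eta_\tau\|_{L^1}=c_4\kappa_{n,\alpha}\,\tau^{-\theta/\alpha}$.

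For the $L^p$ bound I would dominate $|u(s,x)|\le(\rho_{t-s}*|f|)(x)$ and apply Minkowski's inequality for integrals (which here reduces to Young's convolution inequality), obtaining $\|u(s,\cdot)\|_{L^p}\le\|\rho_{t-s}\|_{L^1}\|f\|_{L^p}=c_2\kappa_{n,\alpha}\|f\|_{L^p}$ for every $p\in[1,\infty]$. For the $H^\theta_p$ bound one first checks that $\Delta_x^{\theta/2}$ may be moved inside the $y$-integral — licensed by the integrability of the bound \eqref{FDE} together with a dominated-convergence argument on the singular-integral (or Fourier-multiplier) definition of $\Delta^{\theta/2}$ — so that $|\Delta_x^{\theta/2}u(s,x)|\le(\eta_{t-s}*|f|)(x)$ and hence $\|\Delta_x^{\theta/2}u(s,\cdot)\|_{L^p}\le c_4\kappa_{n,\alpha}(t-s)^{-\theta/\alpha}\|f\|_{L^p}$. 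Combining this with $\|u(s,\cdot)\|_{L^p}\le c_2\kappa_{n,\alpha}\|f\|_{L^p}\le c_2\kappa_{n,\alpha}T^{\theta/\alpha}(t-s)^{-\theta/\alpha}\|f\|_{L^p}$ on $0\le s<t\le T$, and the standard norm equivalence $\|g\|_{H^\theta_p}\simeq\|g\|_{L^p}+\|\Delta^{\theta/2}g\|_{L^p}$ (for $1<p<\infty$, the endpoints handled with the corresponding definition of $H^\theta_p$), one gets $u(s,\cdot)\in H^\theta_p$ with $\|u(s,\cdot)\|_{H^\theta_p}\le C(t-s)^{-\theta/\alpha}\|f\|_{L^p}$.

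That $u=P_{s,t}f$ solves $\partial_s u=-A(s)u$ on $[0,t)\times\mathbb{R}^n$ (in the mild/distributional sense appropriate to the value of $\theta$) follows by differentiating under the integral and using that, by Theorem \ref{tpdE}, $p(s,x;t,y)$ is the fundamental solution of \eqref{Kbequ}; the terminal condition $u(t,\cdot)=f$ is taken in $L^p$ and follows from strong continuity of $(P_{s,t})$ on $L^p$, itself a consequence of the uniform bound just proved, the continuity property \eqref{Con}, and density of bounded uniformly continuous functions in $L^p$. Uniqueness follows either from the uniqueness of the fundamental solution $p$ asserted in Theorem \ref{tpdE}, or directly from the representation $v(s,x)=\mathbb{E}[v(t,X_t)\mid X_s=x]=P_{s,t}f(x)$ obtained by applying It\^{o}'s formula to $v(r,X_r)$ along the solution of \eqref{Lequt}. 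I expect the convolution estimates to be routine — they are, as the paper states, a direct consequence of \eqref{2sidesE} and \eqref{FDE} — and the genuine care to lie in the two "soft" points: interchanging the nonlocal operators $\Delta^{\theta/2}$ and $A(s)$ with the $y$-integral at the low regularity $H^\theta_p$, and making the It\^{o}/Feynman--Kac uniqueness argument rigorous when a competing solution $v(s,\cdot)$ is only known to lie in $H^\theta_p$ with $\theta<\alpha$, which forces a careful interpretation of $A(s)v$ and an approximation or localization in the martingale argument.
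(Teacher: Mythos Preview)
Your proposal is correct and follows essentially the same route as the paper: represent $u(s,x)=\int p(s,x;t,y)f(y)\,dy$, dominate $|u|$ and $|\Delta^{\theta/2}u|$ pointwise by the convolution kernels coming from \eqref{2sidesE} and \eqref{FDE}, and apply Minkowski's inequality for integrals to pass to $L^p$. The paper's proof is terser---it simply invokes Theorem~\ref{tpdE} for existence/uniqueness and does not spell out the scaling computation of $\|\rho_\tau\|_{L^1}$, the interchange of $\Delta^{\theta/2}$ with the $y$-integral, or the terminal-condition/uniqueness arguments you flag---so your additional care on those ``soft'' points is welcome but not a departure in method.
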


\begin{proof}
From Theorem \ref{tpdE}, the unique solution $u(s,x)$ is given by
\begin{equation*}
    u(s,x)=\int_{\mathbb{R}^n} p(s,x,t,y)f(y)dy.
\end{equation*}
The two-sides estimate (\ref{2sidesE}) yields that
\begin{align*}
    |u(s,x)| & \lesssim \int_{\mathbb{R}^n}(t-s)[(t-s)^{\frac{1}{\alpha}}+|y|]^{-n-\alpha}|f(x-y)|dy
\end{align*}
Then for $1 \leq p<\infty$, by Minkowski's inequality for integral, we have
\begin{align*}
    \|u(s,x)\|_{L^p}& \lesssim \int_{\mathbb{R}^n}(t-s)[(t-s)^{\frac{1}{\alpha}}+|y|]^{-n-\alpha}\|f(y)\|_{L^p}dy  \\
    & \lesssim  \|f\|_{L^p}\int_{\mathbb{R}^n}(t-s)[(t-s)^{\frac{1}{\alpha}}+|y|]^{-n-\alpha}dy \\
    & \leq C \|f\|_{L^p}.
\end{align*}
This estimate is obvious when $p=\infty$.

By the fractional derivative estimate (\ref{GraE}) and two-sides estimate (\ref{2sidesE}), we have
\begin{align*}
    |\Delta^{\theta/2}_x u(s,x)| & \leq \int_{\mathbb{R}^n} |\Delta^{\theta/2}_x p(s,x;t,y)||f(y)|dy \\
                    & \lesssim (t-s)^{-\frac{\theta}{\alpha}}\int_{\mathbb{R}^n}  p(s,x;t,y)|f(y)|dy.
\end{align*}
Then we get
\begin{equation*}
    \|\Delta^{\theta/2} u(s,x)\|_{L^p}\leq C (t-s)^{-\frac{\theta}{\alpha}} \|f\|_{L^p}.
\end{equation*}
The proof is complete.
\end{proof}

We now consider the following nonlocal parabolic equation corresponding to SDE (\ref{Lequt}):
\begin{equation}\label{bacdK}
  \left\{
   \begin{aligned}
   & \partial_s w(s,x) = -A(s)w(s,x)+\lambda w(s,x)- g(s,x), \quad (s,x)\in [0,t]\times\mathbb{R}^n,\\
   & w(t,x)=0, \quad x\in \mathbb{R}^n
   \end{aligned}
   \right.
\end{equation}
where $A(s)$ is the generator of SDE (\ref{Lequt}), $\lambda\geq 0$.

As in proof of Theorem 4.5 in \cite{XZ2020}, by fractional derivative estimate (\ref{GraE}), two- sides estimate (\ref{2sidesE}) and Young’s convolution inequality, we have the following solvability and $L^p$-estimate of (\ref{bacdK}).

\begin{lemma}\label{Backe}
Assume that condition $({\bf A})$-$({\bf C})$ hold. Let $p,q\in (1,\infty)$ and $p'\in [p,\infty]$, $q'\in [q,\infty]$, $\vartheta \in [1,\alpha)$ with
\begin{equation*}
    \frac{n}{p}+\frac{\alpha}{q}<\alpha -\vartheta +\frac{n}{p'}+\frac{\alpha}{q'}.
\end{equation*}
Then for every $t>0$, $g\in L^q((0,t);L^p)$, there are constants $ c>1$ and unique mild solution $u(s,x)=\int^t_s P_{s,v}g(v,x)dv$ to (\ref{bacdK}) such that for all $\lambda \geq 0$ and $s\in [0,t]$,
\begin{equation*}
   (1\vee\lambda)^{\frac{1}{\alpha}(\alpha-\vartheta+\frac{n}{p'}+\frac{\alpha}{q'}-\frac{n}{p}-\frac{\alpha}{q})} \|w\|_{L^{q'}(s,t;H^{\vartheta}_{p'})}\leq c\|g\|_{L^q(s,t;L^p)}.
\end{equation*}
\end{lemma}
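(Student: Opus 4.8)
The plan is to build the solution explicitly by Duhamel's formula and then to reduce the stated estimate to two applications of Young's convolution inequality --- one in the space variable and one in the time variable --- with the heat-kernel bounds of Theorem~\ref{tpdE} supplying all the decay rates. I would take as candidate the mild solution
\[
  w(s,x)=\int_s^t e^{-\lambda(v-s)}P_{s,v}g(v,x)\,dv
        =\int_s^t e^{-\lambda(v-s)}\int_{\mathbb{R}^n}p(s,x;v,y)\,g(v,y)\,dy\,dv ,
\]
where the exponential factor is exactly what generates the $(1\vee\lambda)$-weight in the final bound (for $\lambda=0$ it reduces to the formula written in the statement). Differentiating in $s$ under the integral, using $\partial_s P_{s,v}=-A(s)P_{s,v}$ (Lemma~\ref{EXuBack}) together with the continuity property (\ref{Con}) to handle the diagonal term as $v\downarrow s$, one checks that $w$ solves (\ref{bacdK}) with $w(t,\cdot)=0$ in the mild sense. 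Uniqueness in the mild class is immediate from the representation; a genuine uniqueness statement follows by running the same Duhamel identity on the difference of two solutions, which solves the homogeneous problem with zero terminal data and hence vanishes by the two-sided estimate (\ref{2sidesE}).

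For the spatial step, fix $v>s$ and set $\Phi_\tau(z):=\tau(\tau^{1/\alpha}+|z|)^{-n-\alpha}$. The two-sided estimate (\ref{2sidesE}) gives $|P_{s,v}g(v,\cdot)|\lesssim \Phi_{v-s}*|g(v,\cdot)|$, and the fractional derivative estimate (\ref{FDE}), applied with exponent $\vartheta\in[1,\alpha)$, gives $|\Delta^{\vartheta/2}_x P_{s,v}g(v,\cdot)|\lesssim (v-s)^{-\vartheta/\alpha}\,\Phi_{v-s}*|g(v,\cdot)|$. A scaling computation yields $\|\Phi_\tau\|_{L^r}\simeq\tau^{\frac n\alpha(\frac1r-1)}$ for every $r$ with $(n+\alpha)r>n$, so Young's inequality with $\frac1{p'}+1=\frac1r+\frac1p$ --- which forces $r\geq1$ since $p'\geq p$, with $(n+\alpha)r>n$ automatic because $1<p<\infty$ --- combined with the two kernel bounds gives
\[
  \|P_{s,v}g(v,\cdot)\|_{H^{\vartheta}_{p'}}\lesssim (v-s)^{-\gamma}\,\|g(v,\cdot)\|_{L^p},
  \qquad \gamma:=\tfrac1\alpha\Bigl(\vartheta+\tfrac np-\tfrac n{p'}\Bigr),
\]
where on the bounded interval $v-s\in(0,t]$ the lower-order $L^{p'}$-term is absorbed since its exponent $\tfrac n\alpha(\tfrac1{p'}-\tfrac1p)=-\gamma+\vartheta/\alpha$ is larger than $-\gamma$. (The passage from $\Delta^{\vartheta/2}$ to the full $H^{\vartheta}_{p'}$ norm is the standard equivalence $\|(I-\Delta)^{\vartheta/2}f\|_{L^{p'}}\simeq\|f\|_{L^{p'}}+\|\Delta^{\vartheta/2}f\|_{L^{p'}}$, the borderline case $p'=\infty$ being treated as in \cite{XZ2020}.)

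For the temporal step, insert this into the Duhamel formula: with $K(\tau):=e^{-\lambda\tau}\tau^{-\gamma}\mathbf{1}_{(0,t]}(\tau)$ we obtain the pointwise bound $\|w(\sigma,\cdot)\|_{H^{\vartheta}_{p'}}\lesssim\int_\sigma^t K(v-\sigma)\,\|g(v,\cdot)\|_{L^p}\,dv$, and Young's convolution inequality in the time variable with exponents $q',q,m$ satisfying $1+\tfrac1{q'}=\tfrac1m+\tfrac1q$ then gives, for every lower endpoint $s\in[0,t]$, $\|w\|_{L^{q'}(s,t;H^{\vartheta}_{p'})}\lesssim\|K\|_{L^m(0,t)}\,\|g\|_{L^q(s,t;L^p)}$ (uniformly in $s$, since $\|K\|_{L^m(0,t-s)}\leq\|K\|_{L^m(0,t)}$). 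The substitution $\tau=u/(m\lambda)$ produces $\|K\|_{L^m(0,t)}\lesssim(1\vee\lambda)^{\gamma-1/m}$, finite precisely when $\gamma m<1$. The decisive bookkeeping is that $\gamma m<1$ is literally the hypothesis $\tfrac np+\tfrac\alpha q<\alpha-\vartheta+\tfrac n{p'}+\tfrac\alpha{q'}$ rewritten, and that
\[
  \tfrac1m-\gamma=1+\tfrac1{q'}-\tfrac1q-\gamma
  =\tfrac1\alpha\Bigl(\alpha-\vartheta+\tfrac n{p'}+\tfrac\alpha{q'}-\tfrac np-\tfrac\alpha q\Bigr),
\]
so moving $\|K\|_{L^m(0,t)}$ to the left-hand side reproduces exactly the claimed $(1\vee\lambda)$-power. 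The main obstacle is therefore not any single estimate but keeping these three scaling exponents --- the space-Young exponent, the time-Young exponent, and the $\lambda$-rescaling exponent --- all consistent with the single hypothesis, while making the Duhamel computation rigorous at the low regularity $g\in L^q((0,t);L^p)$; this is exactly where one leans on the sharp kernel bounds of Theorem~\ref{tpdE}, following the proof of Theorem~4.5 in \cite{XZ2020}.
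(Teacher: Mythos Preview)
Your proposal is correct and follows exactly the route the paper indicates: the paper does not give a detailed proof here but simply refers to Theorem~4.5 of \cite{XZ2020} and names the three ingredients---the two-sided estimate (\ref{2sidesE}), the fractional derivative estimate (\ref{FDE}), and Young's convolution inequality---all of which you deploy in the same way. Your write-up in fact fleshes out the exponent bookkeeping (the identification of $\gamma m<1$ with the hypothesis and of $\tfrac1m-\gamma$ with the $(1\vee\lambda)$-power) and correctly inserts the factor $e^{-\lambda(v-s)}$ that the Duhamel formula for (\ref{bacdK}) requires, which the paper's displayed formula for the mild solution omits.
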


\subsection{Steady states, invariant measures and ergodicity}
We recall some basic notions about the invariant measure and ergodicity. Now we assume that the drift term $b$ and diffusion term $\sigma$ of SDE (\ref{Lequ}) is independent of time $t$. Thus the solution $(X_t)_{t\geq 0}$ is a homogeneous Markov process with Markov Feller semigroup $P_t$.

\begin{definition}
A probability measure $\mu$ on $(\mathbb{R}^{n}, \mathcal{B}(\mathbb{R}^n))$ is said to be an invariant measure under Markov semigroup $P_t$ if it satisfies
\begin{equation*}
    \int_{\mathbb{R}^n} Af(x)\mu(dx)=0, \ \forall f\in C^{\infty}_0(\mathbb{R}^n),
\end{equation*}
where $A$ is the generator of Markov semigroup $P_t$.
\end{definition}

This means $\int_{\mathbb{R}^n}(P_tf)(x)\mu(dx)=\int_{\mathbb{R}^n}f(x)\mu(dx)$ for all $t\geq 0$ and $f\in C^{\infty}_0(\mathbb{R}^n)$.
If the invariant measure $\mu$ has probability density $p_{ss}(x)$, then $p_{ss}(x)$ is a solution of stationary Fokker-Planck equation (\ref{FKequ}), i.e. $A^{\ast}p_{ss}(x)=0,$ or $P^{\ast}_{t}p_{ss}(x)=p_{ss}(x)$ for all $t>0$. And if initial distribution of $(X_{t})_{t\geq 0}$ is above invariant measure $\mu$, then $(X_{t})_{t\geq 0}$ is a stationary Markov process, which satisfies that for every $f\in \mathcal{B}_b(\mathbb{R}^n)$ and $t>0$, $P_t f(X_{0})=f(X_{0})$.

It is known that a stochastic dynamic system $(X_{t})_{t\geq 0}$ is said in an steady state if its initial distribution is the invariant measure $\mu$ of the corresponding SDE and $(X_{t})_{t\geq 0}$ is a stationary Markov process (see \cite{BL1955}). Moreover, a steady state $(X_{t})_{t\geq 0}$ is said to be in an equilibrium state if $(X_{t})_{t\geq 0}$ is a symmetric Markov process with respect to the invariant measure $\mu$, and $(X_{t})_{t\geq 0}$ is said to be in an non-equilibrium steady state if $(X_{t})_{t\geq 0}$ is a non-symmetric Markov process with respect to the invariant measure $\mu$ (see \cite{CG2008,ZQQ2012}).

\begin{definition}\label{ergodic}
A Morkov semigroup $P_t$ is ergodic if $P_t$ admits a unique invariant probability measure $\mu$, which amounts to say that
\begin{equation*}
    \lim_{t\rightarrow \infty}\frac{1}{t}\int_0^t P_s f(x)ds = \int_{\mathbb{R}^n}f(x)\mu(dx), \quad \forall f\in \mathcal{B}_b(\mathbb{R}^n).
\end{equation*}
\end{definition}
To give the ergodicity result, we need a dissipativity assumption for drift term $b(x)$.\\
$({\bf D})$(Dissipativity) For all $x\in \mathbb{R}^n$, there exits a constant $k_1>0$, such that
\begin{equation*}
     \langle x,b(x) \rangle \leq -k_1|x|
\end{equation*}
Moreover, the constant $k_1$ satisfies
\begin{equation*}
  \sqrt{2} k_1 > \Lambda^2 \int_{0<|y|<1}|y|^2 \nu(dy)+\Lambda\int_{|y|\geq 1}|y| \nu(dy),
\end{equation*}
where $\Lambda$ is the constant in uniformly elliptic assumption $(\bf B)$.

A usual method for proving the existence of invariant measures of Markov processes is the Bogoliubov-Krylov argument, which is based on Lyapunov functions (e.g. \cite{D2006}, Theorem 7.1 and Proposition 7.10).
\begin{theorem}\label{BKthm}
Let $X_t(x_0)$ be a Morkov process with initial value $X_0=x_0 \in\mathbb{R}^n$, and $P_t$ be the corresponding Markov Feller semigroup of $X_t$. Let $V:\mathbb{R}^n\rightarrow \mathbb{R}^{+}$ be a Borel measurable function whose level sets
\begin{equation*}
  K_a:= \{x\in \mathbb{R}^n:\ V(x)\leq a\},\quad a>0,
\end{equation*}
are compact for every $a>0$. Assume that there exists $x_0\in\mathbb{R}^n$ and $C(x_0)>0$ such that
\begin{equation*}
  \mathbb{E}(V(X_t(x_0)))<C(x_0), \quad \forall t\geq 0.
\end{equation*}
Then there is an invariant measure for $P_t$.
\end{theorem}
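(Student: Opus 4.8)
The statement is the classical Krylov--Bogoliubov existence theorem, so the plan is the standard three-step argument: form the time-averaged occupation measures of the process started at $x_0$, prove this family is tight using the Lyapunov bound, extract a weakly convergent subsequence via Prokhorov's theorem, and verify that the limit is invariant using the Feller property. Concretely, fix the point $x_0$ supplied by the hypothesis and, for each $T>0$, define a Borel probability measure $\mu_T$ on $\mathbb{R}^n$ by
\[
  \mu_T(B):=\frac1T\int_0^T \mathbb{P}\big(X_t(x_0)\in B\big)\,dt,\qquad B\in\mathcal{B}(\mathbb{R}^n);
\]
by Fubini this is the same as $\int_{\mathbb{R}^n}f\,d\mu_T=\frac1T\int_0^T P_tf(x_0)\,dt$ for $f\in\mathcal{B}_b(\mathbb{R}^n)$, and the time integrals make sense because $t\mapsto P_tf(x_0)=\mathbb{E}f(X_t(x_0))$ is measurable — in fact continuous, since a Feller process is stochastically continuous.

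For tightness, apply Markov's inequality: for every $a>0$ and every $t\ge 0$,
\[
  \mathbb{P}\big(X_t(x_0)\notin K_a\big)=\mathbb{P}\big(V(X_t(x_0))>a\big)\le\frac{\mathbb{E}\big(V(X_t(x_0))\big)}{a}\le\frac{C(x_0)}{a},
\]
so $\mu_T(\mathbb{R}^n\setminus K_a)\le C(x_0)/a$ uniformly in $T>0$. Given $\varepsilon>0$, take $a:=C(x_0)/\varepsilon$; then $K_a$ is compact by the hypothesis on the level sets and $\mu_T(K_a)\ge1-\varepsilon$ for every $T$. Hence $\{\mu_T\}_{T>0}$ is tight, and by Prokhorov's theorem there are $T_k\uparrow\infty$ and a Borel probability measure $\mu$ on $\mathbb{R}^n$ with $\mu_{T_k}$ converging weakly to $\mu$.

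It remains to show $\mu$ is invariant. Fix $s\ge0$ and $f\in C_b(\mathbb{R}^n)$. Using the semigroup identity $P_tP_s=P_{t+s}$ (valid because $b$ and $\sigma$ are time-independent in this subsection) and shifting the integration variable,
\[
  \int_{\mathbb{R}^n}P_sf\,d\mu_T-\int_{\mathbb{R}^n}f\,d\mu_T
  =\frac1T\int_0^T\big(P_{t+s}f(x_0)-P_tf(x_0)\big)\,dt
  =\frac1T\Big(\int_T^{T+s}P_tf(x_0)\,dt-\int_0^s P_tf(x_0)\,dt\Big),
\]
whose absolute value is at most $2s\|f\|_{L^\infty}/T\to0$ as $T\to\infty$. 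Since $P_t$ is Feller, $P_sf\in C_b(\mathbb{R}^n)$, so along $T_k$ both $\int P_sf\,d\mu_{T_k}\to\int P_sf\,d\mu$ and $\int f\,d\mu_{T_k}\to\int f\,d\mu$; therefore $\int_{\mathbb{R}^n}P_sf\,d\mu=\int_{\mathbb{R}^n}f\,d\mu$ for all $s\ge0$ and $f\in C_b(\mathbb{R}^n)$. Restricting to $f\in C^{\infty}_0(\mathbb{R}^n)$, this says $\int_{\mathbb{R}^n}(P_sf)(x)\mu(dx)=\int_{\mathbb{R}^n}f(x)\mu(dx)$, equivalently $\int_{\mathbb{R}^n}Af\,d\mu=0$, i.e. $\mu$ is an invariant measure for $P_t$ in the sense recalled above.

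The tightness step is the heart of the matter and is immediate from the uniform Lyapunov bound $\sup_{t\ge0}\mathbb{E}V(X_t(x_0))\le C(x_0)$ together with compactness of the sublevel sets $K_a$, so I expect no genuine obstacle. The only points deserving a word of care are the measurability (really continuity) of $t\mapsto P_tf(x_0)$, which makes the occupation averages well defined, and the passage to the weak limit $\mu_{T_k}(P_sf)\to\mu(P_sf)$, which is exactly where the Feller property $P_s(C_b)\subseteq C_b$ is used. This is the Bogoliubov--Krylov argument recalled from \cite{D2006}, carried out here in the nonlocal setting.
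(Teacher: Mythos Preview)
Your argument is the standard Krylov--Bogoliubov proof and is correct. The paper does not supply its own proof of this theorem at all: it simply quotes the result from Da Prato's book (\cite{D2006}, Theorem~7.1 and Proposition~7.10), so your write-up is in fact more detailed than what appears in the paper, and it follows exactly the classical route the cited reference gives.
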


\begin{remark}
In Theorem \ref{BKthm}, the Borel measurable function $V(x)$ is called a Lyapunov function for $P_t$.
\end{remark}
We recall the following notations for Markov Feller semigroup $P_t$.\\
$\bf{(Strong \ Feller)}$ $P_t$ has the strong Feller property if for all $f \in\mathcal{B}_b(\mathbb{R}^n)$, $P_t f\in C_b(\mathbb{R}^n)$.\\
$\bf{(Irreduciblility)}$ $P_t$ is irreducible if for each open ball $B$ and for all $t>0$, $x\in\mathbb{R}^n$, $P_t 1_{B_1}(x)>0$.\\
We have a sufficient condition for the uniqueness of invariant measure of Markov semigroup $P_t$.
\begin{lemma}
If $P_t$ is strong Feller and irreducible, then it possesses at most one invariant measure.
\end{lemma}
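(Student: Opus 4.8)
The statement to prove is the classical fact that a strong Feller, irreducible Markov semigroup has at most one invariant measure. The plan is to argue by contradiction using the equivalence of any two invariant measures, which follows from irreducibility, and then to use the strong Feller property to upgrade equivalence to equality via an ergodic decomposition argument (Doob's theorem).

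First I would recall that if $\mu$ is an invariant measure then by the Chapman--Kolmogorov equation and irreducibility one shows $\mu$ and the kernel $P_t(x,\cdot)$ are mutually interlocked: for any Borel set $B$ with $\mu(B)>0$ and any $x$, $P_t 1_B(x)>0$, so $\mu$ has full topological support; conversely, if $\mu(B)=0$ then $\int P_t 1_B\, d\mu = \mu(B)=0$ forces $P_t 1_B(x)=0$ for $\mu$-a.e. $x$. Now suppose $\mu_1,\mu_2$ are two invariant measures. The key claim is that $\mu_1$ and $\mu_2$ are equivalent. To see this, one may pass to the extremal (ergodic) invariant measures in the convex set of invariant measures; distinct ergodic invariant measures are mutually singular. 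So it suffices to show that two ergodic invariant measures that are both supported everywhere (by irreducibility) cannot be singular. Here is where strong Feller enters: if $\mu_1 \perp \mu_2$, pick a Borel set $B$ with $\mu_1(B)=1$, $\mu_2(B)=0$. By invariance, $P_t 1_B = 1_B$ $\mu_1$-a.e. and $=0$ $\mu_2$-a.e. But $P_t 1_B$ is continuous (strong Feller), hence $\{P_t 1_B > 1/2\}$ is open and $\mu_1$-full, so nonempty; similarly $\{P_t 1_B < 1/2\}$ is open and $\mu_2$-full, hence nonempty. Using that both $\mu_i$ charge every open set (irreducibility plus invariance gives full support), one derives that $P_t 1_B$ must be both close to $1$ and close to $0$ on overlapping dense regions — more carefully, one shows the continuous function $h := P_t 1_B$ takes the value $1$ on a dense set and $0$ on a dense set, contradicting continuity unless $h$ is constant, which contradicts $0 = \int h\, d\mu_2 \ne 1 = \int h\, d\mu_1$.

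A cleaner route, which I would actually write down, is: let $\mu_1,\mu_2$ be invariant and set $\mu = \tfrac12(\mu_1+\mu_2)$, also invariant. By irreducibility, $\mu$ has full support. Fix a Borel set $B$; the function $h = P_t 1_B$ is continuous and satisfies $0\le h\le 1$ with $\int h\, d\mu_i = \mu_i(B)$. Invariance gives $\int (h - 1_B)\, d\mu_i = 0$, and iterating, $P_t h = h$ $\mu_i$-a.e. for the invariant version; combined with the strong Feller regularity and full support one concludes $P_t 1_B$ is a genuine continuous harmonic function. Then an argument using that $\sup$ of a harmonic function over a support-full set is attained (or the maximum principle for $P_t$) forces $h$ constant, hence $\mu_1(B) = \mu_2(B)$ for all $B$, i.e. $\mu_1 = \mu_2$.

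The main obstacle is the step turning ``$P_t 1_B$ is continuous and $\mu_i$-a.e.\ invariant'' into ``$P_t 1_B$ is constant.'' This is the heart of Doob's theorem and requires care: one needs the strong Feller property at a single positive time together with irreducibility to conclude that all invariant sets are trivial (the zero-one law), and then that the continuous representative of the indicator of an invariant set is constant on the support. I would handle this by invoking the standard Doob--Khasminskii theorem directly, or reproduce its short proof: strong Feller plus irreducibility implies that for $x,y \in \mathbb{R}^n$ and $t>0$ the measures $P_{2t}(x,\cdot)$ and $P_{2t}(y,\cdot)$ are not mutually singular (since $P_{2t}(x,\cdot) = \int P_t(z,\cdot) P_t(x,dz)$ and $P_t(x,\cdot)$, $P_t(y,\cdot)$ both give positive mass to every ball), and from non-singularity of transition kernels the uniqueness of the invariant measure follows by a coupling/Cesàro-averaging argument. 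Everything else is routine measure theory.
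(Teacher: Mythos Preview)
The paper does not prove this lemma at all: it is stated without proof as a standard background fact, alongside the ergodicity criterion that follows it. So there is no ``paper's own proof'' to compare against; what you have written is a sketch of the classical Doob--Khas'minskii theorem, which is exactly the result being quoted.

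Your sketch is essentially correct in outline, though it meanders through three variants before settling on the cleanest one. The sharpest version is the last paragraph: strong Feller at time $t$ plus irreducibility at time $t$ imply that for any $x,y$ the measures $P_{2t}(x,\cdot)$ and $P_{2t}(y,\cdot)$ are not mutually singular, and from this the uniqueness of the invariant measure follows (distinct ergodic invariant measures would have to be mutually singular, but each is equivalent to $P_{2t}(x,\cdot)$ for $\mu$-a.e.\ $x$ in its support). If you were to actually write this up, I would drop the first two exploratory paragraphs and go straight to that argument, or simply cite Doob's theorem (e.g.\ Da Prato--Zabczyk, \emph{Ergodicity for Infinite Dimensional Systems}, Theorem 4.2.1). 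One small slip: in your first approach you write ``$P_t 1_B = 1_B$ $\mu_1$-a.e.'', which is not what invariance says directly; invariance gives $\int P_t 1_B\, d\mu_1 = \mu_1(B) = 1$, hence $P_t 1_B = 1$ $\mu_1$-a.e.\ since $0\le P_t 1_B\le 1$. This is what you need anyway, so the argument survives.
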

The following ergodic result is standard.
\begin{theorem}
If a Markov process has a unique invariant measure, then it is ergodic.
\end{theorem}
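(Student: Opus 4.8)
The plan is to derive the Cesàro convergence in Definition \ref{ergodic} from the uniqueness hypothesis via the Krylov--Bogoliubov argument, using the Feller and strong Feller properties that hold under the standing assumptions. Write $\mu$ for the unique invariant measure (uniqueness presupposes existence). Fix $x\in\mathbb{R}^n$ and, for $T>0$, introduce the averaged occupation measures
\begin{equation*}
  \mu^x_T(B):=\frac1T\int_0^T \pi_t(x,B)\,dt, \qquad B\in\mathcal{B}(\mathbb{R}^n),
\end{equation*}
where $\pi_t(x,\cdot)$ is the transition kernel, so that $\frac1T\int_0^T P_t f(x)\,dt=\int_{\mathbb{R}^n}f\,d\mu^x_T$ for every $f\in\mathcal{B}_b(\mathbb{R}^n)$. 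It then suffices to prove $\mu^x_T\to\mu$ weakly as $T\to\infty$ and afterwards to enlarge the test class from $C_b(\mathbb{R}^n)$ to $\mathcal{B}_b(\mathbb{R}^n)$.

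For the weak convergence I would first establish tightness of $\{\mu^x_T:T\ge1\}$. This is where the dissipativity assumption $(\mathbf{D})$ enters: it supplies a Lyapunov function $V$ with compact sublevel sets and a uniform-in-$t$ moment bound $\sup_{t\ge0}\mathbb{E}[V(X_t(x))]\le C(x)$ for each starting point $x$ (the hypothesis of Theorem \ref{BKthm}, verified in Section 3 under $(\mathbf{D})$), hence $\sup_{T\ge1}\int V\,d\mu^x_T\le C(x)$, and Chebyshev's inequality gives tightness. By Prokhorov's theorem any sequence $T_k\to\infty$ admits a subsequence along which $\mu^x_{T_k}$ converges weakly to a probability measure $\nu$. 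To see $\nu$ is invariant, observe that by the Chapman--Kolmogorov identity $\mu^x_TP_s-\mu^x_T=\frac1T\big(\int_T^{T+s}\pi_t(x,\cdot)\,dt-\int_0^s\pi_t(x,\cdot)\,dt\big)$ has total variation at most $2s/T$; since $P_s$ is Feller, letting $k\to\infty$ in the identity $\int P_sf\,d\mu^x_{T_k}=\int f\,d\mu^x_{T_k}+\int f\,d(\mu^x_{T_k}P_s-\mu^x_{T_k})$, valid for $f\in C_b$, yields $\int P_sf\,d\nu=\int f\,d\nu$, i.e. $\nu P_s=\nu$ for all $s>0$. Thus $\nu$ is an invariant probability measure, so $\nu=\mu$ by the uniqueness hypothesis. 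As the limit does not depend on the subsequence, $\mu^x_T\to\mu$ weakly, that is, $\frac1T\int_0^T P_tf(x)\,dt\to\int f\,d\mu$ for all $f\in C_b(\mathbb{R}^n)$.

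To reach an arbitrary $f\in\mathcal{B}_b(\mathbb{R}^n)$ I would use the strong Feller property. Fix any $\varepsilon>0$; then $P_\varepsilon f\in C_b(\mathbb{R}^n)$, so by the previous step applied to $P_\varepsilon f$ together with the invariance of $\mu$,
\begin{equation*}
  \frac1T\int_0^T P_{t+\varepsilon}f(x)\,dt=\frac1T\int_0^T P_t(P_\varepsilon f)(x)\,dt \longrightarrow \int P_\varepsilon f\,d\mu=\int f\,d\mu \qquad (T\to\infty).
\end{equation*}
Since moreover $\big|\frac1T\int_0^T P_tf(x)\,dt-\frac1T\int_0^T P_{t+\varepsilon}f(x)\,dt\big|=\frac1T\big|\int_0^\varepsilon P_tf(x)\,dt-\int_T^{T+\varepsilon}P_tf(x)\,dt\big|\le 2\varepsilon\|f\|_{L^\infty}/T\to0$, the two estimates combine to give $\frac1T\int_0^T P_tf(x)\,dt\to\int f\,d\mu$. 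As $x\in\mathbb{R}^n$ was arbitrary, this is exactly the ergodicity of $P_t$ in the sense of Definition \ref{ergodic}.

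The main obstacle is the tightness step: on the non-compact state space $\mathbb{R}^n$, the mere existence of a unique invariant measure does not by itself prevent the occupation measures $\mu^x_T$ from losing mass at infinity, and one must rule this out through an a priori moment bound — precisely the role played by the dissipativity condition $(\mathbf{D})$ and the Lyapunov estimate it yields. The remaining ingredients (Krylov--Bogoliubov, Prokhorov, and the strong Feller upgrade of the test class) are routine.
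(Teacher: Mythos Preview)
The paper does not actually prove this theorem: it simply declares the result ``standard'' and states it without argument. In fact, the paper's Definition~\ref{ergodic} already \emph{defines} ergodicity as the existence of a unique invariant probability measure, and asserts that this ``amounts to'' the Ces\`aro convergence; the theorem is then essentially a restatement of that claimed equivalence, deferred to the literature (the cited reference is Da~Prato's book).

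Your argument is correct, but it proves something slightly different from the bare statement: you establish the Ces\`aro convergence for every $x\in\mathbb{R}^n$ and every $f\in\mathcal{B}_b(\mathbb{R}^n)$ \emph{under the standing hypotheses} $(\mathbf{A})$--$(\mathbf{D})$ of the paper, not for an abstract Markov process. You are explicit about this, correctly identifying that tightness of $\{\mu^x_T\}$ requires the Lyapunov control from $(\mathbf{D})$, and that the upgrade from $C_b$ to $\mathcal{B}_b$ uses the strong Feller property (which in the paper comes from the heat-kernel estimates of Theorem~\ref{tpdE}). Within that framework the steps---tightness, Prokhorov, invariance of subsequential limits via the Feller property, identification by uniqueness, and the $P_\varepsilon$-shift trick---are all sound.

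By contrast, the ``standard'' proof the paper has in mind (e.g.\ in Da~Prato) typically runs through Von~Neumann's mean ergodic theorem in $L^2(\mu)$: uniqueness of $\mu$ forces the space of $P_t$-invariant functions in $L^2(\mu)$ to be the constants, whence the time averages converge in $L^2(\mu)$ (and then $\mu$-a.e.) to $\int f\,d\mu$. That route needs no Lyapunov structure but only yields convergence for $\mu$-almost every $x$, not for every $x$. Your Krylov--Bogoliubov approach buys the stronger pointwise-in-$x$ conclusion at the price of the extra hypotheses---which is exactly what the paper needs later, so your version is arguably the more useful one in context.
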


\section{Existence, uniqueness and regularity of invariant measure}
\subsection{Existence of invariant measure and ergodic property}

We show the following moment estimate of the unique strong solution to (\ref{LequU}).
\begin{lemma}\label{GEME}
Assume that $({\bf A})$-$({\bf D})$ hold. Let $X_0$ be the initial value of SDE (\ref{Lequ}) with initial distribution $\mu_0$. Suppose that $X_0$ has a finite first moment, i.e. $\mathbb{E}|X_0|<\infty$. Then for solution $(X_t)_{t \geq 0}$ to SDE (\ref{LequU}) with initial value $X_0$, there exits a positive constant $C$ such that
\begin{equation}\label{momentE}
  \mathbb{E}|X_t| \leq \mathbb{E}\sqrt{1+|X_0|^2}+C, \quad \forall t>0.
\end{equation}
\end{lemma}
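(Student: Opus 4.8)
The plan is to apply Itô's formula to the smooth Lyapunov-type function $V(x) = \sqrt{1+|x|^2}$ along the solution $(X_t)_{t\geq 0}$ of the perturbed SDE~(\ref{LequU}), take expectations, and show that the resulting differential inequality forces the bound~(\ref{momentE}). The function $V$ is a natural choice: it is comparable to $|x|$ for large $|x|$ but is $C^2$ everywhere, with bounded first and second derivatives — $\nabla V(x) = x/\sqrt{1+|x|^2}$ so $|\nabla V|\leq 1$, and the Hessian decays like $|x|^{-1}$. This smoothness is what lets us handle both the drift term and the jump term cleanly.

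First I would write out the Itô formula for jump processes applied to $V(X_t)$: the drift contribution is $\nabla V(X_t)\cdot(b(X_t)+F(t)K(X_t))$, and the jump contribution, after compensation, is the integral against $\nu(dy)$ of $V(X_{t^-}+\sigma(X_{t^-})y) - V(X_{t^-}) - \mathbf{1}_{\{|y|<1\}}\nabla V(X_{t^-})\cdot\sigma(X_{t^-})y$, with the compensated-measure part being a martingale of mean zero (one should note the first moment assumption plus assumption~(\textbf{D}) keeps things integrable; a localization/stopping-time argument handles this rigorously). For the drift, assumption~(\textbf{D}) gives $\langle x, b(x)\rangle \leq -k_1|x|$, and since $\nabla V(x)\cdot b(x) = \langle x,b(x)\rangle/\sqrt{1+|x|^2}$, this is $\leq -k_1|x|/\sqrt{1+|x|^2}$, which tends to $-k_1$ as $|x|\to\infty$; the perturbation term $F(t)\nabla V\cdot K(X_t)$ is bounded by $\|F\|_{L^\infty}\|K\|_{L^\infty}$, a constant. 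For the jump term I would split the $\nu$-integral at $|y|=1$: on $|y|\geq 1$ use $|V(x+z)-V(x)|\leq |z| \leq \Lambda|y|$ (mean value bound, since $|\nabla V|\leq 1$, with assumption~(\textbf{B})), giving a bound $\Lambda\int_{|y|\geq 1}|y|\,\nu(dy)$; on $|y|<1$ use the second-order Taylor remainder controlled by $\sup\|D^2 V\|\cdot|\sigma(x)y|^2 \leq C\Lambda^2|y|^2$, giving $C\Lambda^2\int_{0<|y|<1}|y|^2\,\nu(dy)$.

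Collecting terms, $\tfrac{d}{dt}\mathbb{E}V(X_t) \leq -k_1 \mathbb{E}\big[|X_t|/\sqrt{1+|X_t|^2}\big] + C_1$ where $C_1$ lumps together the perturbation bound and the two jump bounds; assumption~(\textbf{D}) is designed precisely so that $k_1$ dominates the jump constants (up to the $\sqrt 2$ factor), so that for $|x|$ large the negative term beats $C_1$ while for $|x|$ bounded $\mathbb{E}V(X_t)$ is trivially controlled. The cleanest way to close is: on the region $|X_t|\geq R$ for a suitable $R$, $-k_1|X_t|/\sqrt{1+|X_t|^2}+C_1 \leq -\delta < 0$, so $\mathbb{E}V(X_t)$ cannot grow; splitting the expectation over $\{|X_t|<R\}$ and $\{|X_t|\geq R\}$ and using $\mathbb{E}V(X_t)\leq \mathbb{E}V(X_0) + \int_0^t(\text{drift})\,ds$ yields $\mathbb{E}V(X_t)\leq \mathbb{E}V(X_0) + C$ uniformly in $t$, and since $|x|\leq V(x) = \sqrt{1+|x|^2}$ this gives~(\ref{momentE}).

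The main obstacle is the rigorous justification of the martingale step — showing the compensated Poisson integral $\int_0^t\!\int (V(X_{s^-}+\sigma y)-V(X_{s^-}) - \cdots)\widetilde N^L(ds,dy)$ is a true martingale with zero mean, rather than just a local martingale. Because $|\nabla V|\leq 1$ the large-jump part of the integrand grows at most linearly in $|y|$, which is \emph{not} $\nu$-integrable near $|y|=\infty$ in the naive sense — one must use the compensated form and the fact that $\int_{|y|\geq 1}|y|\,\nu(dy)<\infty$ for $\alpha<1$... but here $1<\alpha<2$, so $\int_{|y|\geq 1}|y|\,\nu(dy)=\infty$! This is why assumption~(\textbf{D}) as stated with $\int_{|y|\geq 1}|y|\,\nu(dy)$ must be interpreted carefully, or why one truncates. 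I would handle this by a stopping-time localization $\tau_N = \inf\{t: |X_t|\geq N\}$, derive the inequality for $t\wedge\tau_N$ where everything is bounded, and pass to the limit $N\to\infty$ using Fatou's lemma on the left and monotone/dominated convergence on the right, together with a preliminary (possibly cruder) moment bound ensuring $\tau_N\to\infty$ a.s. and $\mathbb{E}V(X_{t\wedge\tau_N})\to\mathbb{E}V(X_t)$.
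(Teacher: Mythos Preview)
Your setup—applying It\^o's formula to $V(x)=\sqrt{1+|x|^2}$ and bounding the drift and jump contributions separately—is exactly the paper's approach, and your estimates on those pieces match the paper's. Two remarks, one minor and one substantive.

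First, your worry that $\int_{|y|\geq 1}|y|\,\nu(dy)=\infty$ for $1<\alpha<2$ is misplaced: since $\nu(dy)=c_\alpha|y|^{-n-\alpha}\,dy$, in polar coordinates this integral is proportional to $\int_1^\infty r^{-\alpha}\,dr$, which is finite precisely because $\alpha>1$. (It is the \emph{small}-jump integral $\int_{|y|<1}|y|\,\nu(dy)$ that diverges in this regime.) So assumption~(\textbf{D}) is well-posed as written, and the standard localization $t\wedge\tau_N$ plus Fatou suffices to justify taking expectations—there is no real obstacle here.

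Second—and this is the genuine gap—your closing argument does not work. From the It\^o computation you (and the paper) arrive at
\[
\mathbb{E}V(X_t)\;\leq\;\mathbb{E}V(X_0)+\Bigl(\tfrac{C_1}{2}-\tfrac{k_1}{\sqrt2}\Bigr)t+\tfrac{k_1}{\sqrt2}\int_0^t\mathbb{P}(|X_s|<1)\,ds,
\]
with the middle coefficient negative by (\textbf{D}). Your ``splitting over $\{|X_t|<R\}$ and $\{|X_t|\geq R\}$'' does not by itself control the last integral: knowing $AV<0$ outside a ball tells you nothing about how much time the process spends \emph{inside} the ball. A differential inequality of the form $\phi'(t)\leq -\delta+(C+\delta)\,\mathbb{P}(|X_t|<R)$ does not yield $\sup_t\phi(t)<\infty$ without further input—if $\mathbb{P}(|X_t|<R)$ remained near $1$ the bound would grow linearly. (You do not have geometric drift $AV\leq -cV+C$ here, only $AV\leq -c\,\mathbf{1}_{\{|x|\geq 1\}}+C$, which is strictly weaker.) The paper closes this by invoking the \emph{two-sided heat-kernel estimate} of Theorem~\ref{tpdE}: integrating the upper bound~(\ref{2sidesE}) over the unit ball gives $\mathbb{P}(|X_s|<1)\lesssim s^{-n/\alpha}$, so $\int_1^t\mathbb{P}(|X_s|<1)\,ds$ grows at most like $t^{1-n/\alpha}$ (bounded if $n\geq 2$, sublinear if $n=1$), and the negative linear term $(\tfrac{C_1}{2}-\tfrac{k_1}{\sqrt2})t$ absorbs it. This appeal to the transition-density estimate is the missing ingredient; without it, the Lyapunov inequality you derived does not deliver the uniform-in-$t$ bound~(\ref{momentE}).
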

\begin{proof}
Define $r(x):=\sqrt{1+|x|^2}$. Then by It\^{o}'s formula, for $\forall t>0$ we have
\begin{align*}
     & r(X_{t})-r(X_{0})\\
    =& \int_{0}^{ t} b(X_{s^{-}})\nabla r(X_{s^{-}})ds\\
     & + \int_{0}^{t} \int_{\mathbb{R}^N\setminus \{0\}} r(X_{s^{-}}+\sigma(X_{s^{-}})y)-rh(X_{s^{-}}) \widetilde{N}^L(ds,dy)\\
     & +\int_{0}^{t} \int_{\mathbb{R}^N\setminus \{0\}}[r(X_{s^{-}}+\sigma(X_{s^{-}})y)-r(X_{s^{-}})-1_{B_1(0)}(y) \sigma(X_{s^{-}})y\nabla r(X_{s^{-}})]\nu(dy)ds.
\end{align*}
Take expectation for two sides, and we have
\begin{equation*}
  \mathbb{E}r(X_{t})=\mathbb{E}r(X_{0})+ \mathbb{E}\int_{0}^{t}Ar(X_s)ds.
\end{equation*}
The stochastic Fubini theorem implies that
\begin{equation*}
  \mathbb{E}r(X_{t})=\mathbb{E}r(X_{0})+\int_{0}^{ t}\mathbb{E}Ar(X_s)ds.
\end{equation*}
By the dissipativity assumption $(\bf{D})$, we have
\begin{align}\label{drift}
  b(x)\cdot\nabla r(x)=(b(x)\cdot x)(1+|x|^2)^{-\frac{1}{2}}\leq & -k_1 |x|(1+|x|^2)^{-\frac{1}{2}} \leq -\frac{k_1}{\sqrt{2}}1_{\{ |x|\geq 1\} },
\end{align}
Note that
\begin{equation*}
  |r(x+y)-r(x)|\leq |y|\int_0^1 |\nabla r(x+sy)|ds \leq \frac{|y|}{2},
\end{equation*}
and
\begin{equation*}
  r(x+y)-r(x)-y\cdot \nabla r(x) \leq \frac{|y|^2}{2}.
\end{equation*}
We have
\begin{align}\label{intergral}
    & \int_{\mathbb{R}^{n}\setminus \{0\}} [r(x+\sigma(x)y)-r(x)-1_{B_{1}(0)}(y)\sigma(x)y \cdot \nabla r(x)] \nu(dy) \nonumber \\
  \leq & \frac{1}{2}\int_{0<|y|<1}|\sigma(x)y|^2 \nu(dy)+\frac{1}{2}\int_{|y|\geq 1}|\sigma(x)y| \nu(dy) \nonumber  \\
  \leq & \frac{1}{2}\left( \Lambda^2 \int_{0<|y|<1}|y|^2 \nu(dy)+\Lambda\int_{|y|\geq 1}|y| \nu(dy)\right) < \infty.
\end{align}
Denote $C_1=\Lambda^2 \int_{0<|y|<1}|y|^2 \nu(dy)+\Lambda\int_{|y|\geq 1}|y| \nu(dy)>0$. Combining with (\ref{drift}) and (\ref{intergral}), we have
\begin{align*}
 Ar(x) =& b(x) \cdot \nabla r(x) + \int_{\mathbb{R}^{n}\setminus \{0\}} [r(x+\sigma(x)y)-r(x)-1_{B_{1}(0)}(y)\sigma(x)y \cdot \nabla r(x)] \nu(dy) \\
 \leq & -\frac{k_1}{\sqrt{2}}1_{\{|x|\geq 1\} }+ \frac{C_1}{2},
\end{align*}
Then we get
\begin{align*}
  \mathbb{E}r(X_t) = & \mathbb{E}r(X_{0})+\int_{0}^{ t}\mathbb{E}Ar(X_s)ds \\
  \leq & \mathbb{E}r(X_{0})-\frac{k_1}{\sqrt{2}}\int_{0}^{t} P(|X_s|\geq 1)ds + \frac{t C_1}{2} \\
  = & \mathbb{E}r(X_{0})+(\frac{C_1}{2}-\frac{k_1}{\sqrt{2}})t + \frac{k}{\sqrt{2}}\int_{0}^{t}P(|X_s|<1)ds.
\end{align*}
By two-sides estimate (\ref{2sidesE}) and Fubini theorem, there exits a constant $C_2$ such that
\begin{align*}
  P(|X_s|<1)= & \int_{|y|<1}\int_{\mathbb{R}^n}p(0,x;s,y)\mu_0(dx)dy \\
   < & c_2\int_{\mathbb{R}^n}\int_{|y|<1}s((s)^{\frac{1}{\alpha}}+|x-y|)^{-n-\alpha}dy\mu_0(dx)\\
   < & \frac{c_2 \pi^{n/2}}{\Gamma(\frac{n}{2}+1)} s^{-\frac{n}{\alpha}}.
\end{align*}
Then for each $t>1$, we have
\begin{equation*}
  \int_{0}^{t}P(|X_s|<1)ds\leq 1+\int_{1}^{t}P(|X_s|<1)ds=1-\frac{\alpha}{\alpha-n}(t^{1-\frac{n}{\alpha}}-1) \frac{c_2 \pi^{n/2}}{\Gamma(\frac{n}{2}+1)}.
\end{equation*}
By dissipativity condition $\bf {(D})$, $-\sqrt{2} k_1+C_1 <0$. Thus there exists a constant $C>0$ such that
\begin{equation*}
  \mathbb{E}|X_t| \leq \mathbb{E}r(X_t) \leq \mathbb{E}r(X_0)+C, \quad \forall t>0.
\end{equation*}
The proof is complete.
\end{proof}

Now we prove the following ergodic result for SDE (\ref{Lequ}).
\begin{lemma}\label{1orderM}
Assume that $({\bf A})$-$({\bf D})$ hold. Then there exists a unique invariant measure $\mu$ for SDE (\ref{Lequ}), and the SDE (\ref{Lequ}) is ergodic. Moreover,  if $X_{ss}$ is a random variable with invariant distribution $\mu$, then $X_{ss}$ has finite first moment, i.e.  $\int_{\mathbb{R}^n}|x|\mu(dx)<\infty$.
\end{lemma}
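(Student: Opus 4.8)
The plan is to establish, in order, existence of an invariant measure by the Krylov--Bogoliubov argument, its uniqueness (hence ergodicity of SDE (\ref{Lequ})) from the strong Feller property together with irreducibility, and finiteness of the first moment of $\mu$ by transporting the uniform estimate of Lemma \ref{GEME} to a weak limit. For existence I would apply Theorem \ref{BKthm} with the Lyapunov function $V(x):=|x|$, whose sublevel sets $K_a=\{x:|x|\le a\}$ are compact balls. Fixing a deterministic initial point $X_0=x_0\in\mathbb{R}^n$, the Dirac mass $\delta_{x_0}$ has finite first moment, so Lemma \ref{GEME} (for SDE (\ref{Lequ}), i.e. $F\equiv 0$) gives $\mathbb{E}\,V(X_t(x_0))=\mathbb{E}|X_t(x_0)|\le\sqrt{1+|x_0|^2}+C$ for all $t\ge 0$, uniformly in $t$; Theorem \ref{BKthm} then yields an invariant probability measure $\mu$ for the homogeneous Feller semigroup $P_t$ of SDE (\ref{Lequ}).

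For uniqueness, by Theorem \ref{tpdE} the transition density $p(0,x;t,y)$ exists, is continuous in $x$, and satisfies the two-sided bound (\ref{2sidesE}). To see that $P_t$ is strong Feller, for $f\in\mathcal{B}_b(\mathbb{R}^n)$ write $P_tf(x)=\int_{\mathbb{R}^n}p(0,x;t,y)f(y)\,dy$; the upper estimate in (\ref{2sidesE}) dominates the integrand by $c_2\,t\,(t^{1/\alpha}+|x-y|)^{-n-\alpha}\|f\|_{L^\infty}$, integrable in $y$ locally uniformly in $x$, so continuity of $x\mapsto p(0,x;t,y)$ and dominated convergence give $P_tf\in C_b(\mathbb{R}^n)$. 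For irreducibility, the lower estimate in (\ref{2sidesE}) gives $p(0,x;t,y)\ge c_1\,t\,(t^{1/\alpha}+|x-y|)^{-n-\alpha}>0$, hence $P_t\mathbf{1}_B(x)=\int_B p(0,x;t,y)\,dy>0$ for every nonempty open ball $B$ and every $t>0$. The recalled sufficient condition then shows $P_t$ has at most one invariant measure, so $\mu$ is unique, and by the quoted ergodic theorem SDE (\ref{Lequ}) is ergodic.

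For the first moment, recall that in the Krylov--Bogoliubov construction $\mu$ is the weak limit, along some sequence $T_k\uparrow\infty$, of the time-averaged laws $\nu_T(\cdot):=\frac1T\int_0^T\pi_{0,t}(x_0,\cdot)\,dt$. By Lemma \ref{GEME}, $\int_{\mathbb{R}^n}|x|\,\nu_T(dx)=\frac1T\int_0^T\mathbb{E}|X_t(x_0)|\,dt\le\sqrt{1+|x_0|^2}+C$ for every $T>0$. Since $x\mapsto|x|$ is continuous and nonnegative, applying the portmanteau theorem to the bounded continuous truncations $|x|\wedge M$ and letting $M\to\infty$ by monotone convergence gives $\int_{\mathbb{R}^n}|x|\,\mu(dx)\le\liminf_{k\to\infty}\int_{\mathbb{R}^n}|x|\,\nu_{T_k}(dx)\le\sqrt{1+|x_0|^2}+C<\infty$, so any $X_{ss}\sim\mu$ has finite first moment.

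I expect the last step to be the main obstacle: the uniform $L^1$ bound cannot be pushed through weak convergence directly because $x\mapsto|x|$ is unbounded, so one has to argue through truncation and lower semicontinuity, and for this one must make explicit (from the proof of Theorem \ref{BKthm}) that the invariant measure is exactly the weak limit of the Cesàro averages $\nu_T$. The strong Feller and irreducibility checks, by contrast, are immediate from the kernel bounds of Theorem \ref{tpdE}.
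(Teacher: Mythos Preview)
Your argument for existence and uniqueness matches the paper's exactly: Lyapunov function $V(x)=|x|$ with Lemma \ref{GEME} and Theorem \ref{BKthm}, then strong Feller and irreducibility read off from the two-sided heat kernel bound (\ref{2sidesE}).

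For the finite first moment you take a correct but different route. You reopen the Krylov--Bogoliubov construction to realise $\mu$ as a weak subsequential limit of the Ces\`aro averages $\nu_T$, push the uniform bound $\int|x|\,\nu_T(dx)\le\sqrt{1+|x_0|^2}+C$ through via truncation and lower semicontinuity, and note (rightly) that the unbounded integrand is the only subtlety. The paper instead uses the ergodic identity already in hand: once uniqueness gives ergodicity in the sense of Definition \ref{ergodic}, one applies $\lim_{t\to\infty}\tfrac1t\int_0^tP_sf(x_0)\,ds=\int f\,d\mu$ directly to the bounded truncation $f(x)=|x|\,1_{B_m(0)}(x)$, bounds $P_sf(x_0)\le\mathbb{E}_{x_0}|X_s|\le\sqrt{1+|x_0|^2}+C$ uniformly in $s$ and $m$, and then lets $m\to\infty$ by monotone convergence. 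The two arguments are close in spirit (both truncate and use Ces\`aro averages), but the paper's version is a bit cleaner because it does not need to revisit the proof of Theorem \ref{BKthm} or track the particular subsequence along which $\mu$ arises; your version, on the other hand, makes the mechanism by which the moment bound passes to the limit more transparent.
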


\begin{proof}
Assume $X_0=x_0$ for some $ x_0 \in \mathbb{R}^n$. Then Lemma \ref{GEME} implies $\mathbb{E}|X_t|<|x_0|+C$ for each $t>0$. If we choose $V(x)=|x|$ as the Lyapunov function
for $P_t$, then by Theorem \ref{BKthm}, there is an invariant measure for the Markov Feller semigroup $P_t$.

Now we prove the uniqueness of invariant measure. By two-sides estimates (\ref{2sidesE}) for transition probability density $p(s,x;t,y)$, the Markov semigroup $P_t$ of $X_t$ is irreducible. Moreover, since the transition probability density $p(s,x;t,y)$ is unique continuous, for each $f\in\mathcal{B}_b(\mathbb{R}^n)$,
\begin{equation*}
  P_t f(x)=\int_{\mathbb{R}^n}p(0,x;t,y)f(y)dy\in C_b(\mathbb{R}^n).
\end{equation*}
So the Markov semigroup $P_t$ is strong Feller. Thus there exists a unique invariant measure for SDE (\ref{Lequ}), and the SDE (\ref{Lequ}) is ergodic.

For each $m \in \mathbb{N}$, consider the bounded measurable function $x1_{B_{m}(0)}(x)\in \mathcal{B}_b(\mathbb{R}^n)$. Then from the definition of ergodic property, we have
\begin{align*}
  \mathbb{E}(|X_{ss}1_{\{|X_{ss}|<m\}}|) = & \int_{\mathbb{R}^n}|x|1_{B_{m}(0)}(x)\mu(dx) \\
  = & \lim_{t\rightarrow \infty}\frac{1}{t}\int_0^t P_s |x|1_{B_{m}(0)}(x)ds \\
  = & \lim_{t\rightarrow \infty}\frac{1}{t}\int_0^t \mathbb{E}(|X_{s}1_{\{|X_{s}|<m\}}(\omega)|) ds \\
  \leq & \sup_{t\geq 0} \mathbb{E}|X_t| \\
  < & |x_0| +C.
\end{align*}
Note that $x_0+C$ is fixed, and it is independent with $m$. Let $m\rightarrow \infty$, we obtain $\mathbb{E}|X_{ss}|<\infty$.
\end{proof}

Now we prove that the invariant measure $\mu$ has a density $p_{ss}$.

\begin{lemma}\label{density}
Suppose that $(\bf{A})$-$(\bf{C})$ holds. Then the invariant measure $\mu$ has a density $p_{ss}\in L^{p'}(\mathbb{R}^n)$ with $p'<\frac{n}{n-\alpha}$ for $n\geq 2$ and $ p'<\infty$ for $n=1$.
\end{lemma}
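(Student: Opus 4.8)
The plan is to use the two-sided heat kernel estimate (\ref{2sidesE}) to realize $p_{ss}$ as an average of transition densities and then to exploit the decay of the kernel together with the finiteness of the first moment from Lemma \ref{1orderM}. Concretely, since $\mu$ is invariant and the transition density $p(0,x;t,y)$ exists (Theorem \ref{tpdE}), for every $t>0$ one has
\begin{equation*}
  \mu(dy) = \left( \int_{\mathbb{R}^n} p(0,x;t,y)\,\mu(dx) \right) dy =: p_{ss}(y)\,dy,
\end{equation*}
which already shows $\mu$ is absolutely continuous with a density $p_{ss}(y)=\int_{\mathbb{R}^n} p(0,x;t,y)\,\mu(dx)$ for any fixed $t$ (the value is $t$-independent $\mu$-a.e., and all these representatives agree since each is continuous in $y$ by the continuity of the kernel). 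First I would fix, say, $t=1$ and write $p_{ss}(y) = \int_{\mathbb{R}^n} p(0,x;1,y)\,\mu(dx)$.

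Next I would estimate the $L^{p'}$ norm. Using the upper bound in (\ref{2sidesE}) with $t-s=1$, $p(0,x;1,y)\lesssim (1+|x-y|)^{-n-\alpha}$, so by Minkowski's integral inequality,
\begin{equation*}
  \|p_{ss}\|_{L^{p'}} \leq \int_{\mathbb{R}^n} \|p(0,x;1,\cdot)\|_{L^{p'}}\,\mu(dx)
  \lesssim \int_{\mathbb{R}^n} \left( \int_{\mathbb{R}^n} (1+|x-y|)^{-(n+\alpha)p'}\,dy \right)^{1/p'} \mu(dx).
\end{equation*}
The inner integral is finite and independent of $x$ precisely when $(n+\alpha)p'>n$, i.e. for all $p'\geq 1$, so this crude bound actually gives $p_{ss}\in L^{p'}$ for every $p'\in[1,\infty]$ — too strong, and in fact wrong, so the constant-$t$ argument above must be too lossy. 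The point I am missing is that one should not fix $t$ but rather optimize, or equivalently use the small-time singularity: for $t$ small the kernel $p(0,x;t,y)$ concentrates and its $L^{p'}$ norm blows up like $t^{-\frac{n}{\alpha}(1-1/p')}$, which is integrable in $t$ near $0$ only for $p'<\frac{n}{n-\alpha}$. So the correct route is: write $p_{ss}(y)=\int_0^1 \big(\text{something}\big)$? No — rather, I would instead use that $p_{ss}=P_t^\ast p_{ss}$ and bootstrap, \emph{or} more cleanly observe that Lemma \ref{1orderM} gives $\int |x|\,\mu(dx)<\infty$ and combine this with the kernel bound to control the tail of $p_{ss}$ while the local integrability exponent $\frac{n}{n-\alpha}$ comes from the Sobolev-type embedding governing the stationary Fokker–Planck equation $A^\ast p_{ss}=0$.

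The cleanest argument, and the one I would actually carry out: from $p_{ss}=P_1^\ast p_{ss}$ and the fractional-derivative/gradient estimates one shows $p_{ss}\in H^{\theta}_1$ for $\theta<\alpha$ small, essentially by the reasoning of Lemma \ref{EXuBack} applied to the forward equation; then the Sobolev embedding stated in Section 2, $H^{\theta}_1(\mathbb{R}^n)\hookrightarrow H^0_{p'}(\mathbb{R}^n)=L^{p'}(\mathbb{R}^n)$, holds as long as $-\frac{n}{p'}<\theta-n$, i.e. $p'<\frac{n}{n-\theta}$, and letting $\theta\uparrow\alpha$ (with strict inequality) yields $p'<\frac{n}{n-\alpha}$ for $n\geq2$ and no restriction for $n=1$ (where $\alpha>1=n$). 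The main obstacle is making the step "$p_{ss}\in H^\theta_1$" rigorous: $p_{ss}$ is only known to be a finite measure's density, so $\|p_{ss}\|_{L^1}=1$ is fine, but to differentiate under the integral in $p_{ss}(y)=\int p(0,x;1,y)\mu(dx)$ one needs the gradient estimate (\ref{GraE}) and fractional estimate (\ref{FDE}) to be integrable against $\mu(dx)\,dy$; here (\ref{FDE}) gives $|\Delta^{\theta/2}_x p(0,x;1,y)|\lesssim (1+|x-y|)^{-n-\alpha}$ (acting in $x$, but a symmetry/duality of the kernel or a parallel estimate in $y$ is needed — this is the delicate point), whose double integral $\int\int \mu(dx)\,dy$ is finite, giving $\|\Delta^{\theta/2}p_{ss}\|_{L^1}<\infty$ as desired. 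I expect verifying that the fractional-derivative estimate can be transferred from the $x$-variable to the $y$-variable (or invoking the adjoint kernel's analogous bounds) to be the real work; everything else is Minkowski's inequality plus the quoted Sobolev embedding.
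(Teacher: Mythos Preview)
Your very first ``crude'' approach is in fact correct, and you abandon it for the wrong reason. The representation $p_{ss}(y)=\int_{\mathbb{R}^n} p(0,x;1,y)\,\mu(dx)$ together with the upper bound in (\ref{2sidesE}) at $t-s=1$ gives $p(0,x;1,y)\le c_2(1+|x-y|)^{-n-\alpha}\le c_2$, hence $p_{ss}\in L^\infty$ immediately, and by interpolation with $\|p_{ss}\|_{L^1}=1$ you get $p_{ss}\in L^{p'}$ for every $p'\in[1,\infty]$. There is nothing ``wrong'' about this; the lemma simply states a weaker conclusion than what the kernel bound delivers. (Indeed the paper itself uses exactly this representation-plus-upper-bound argument later, in Lemma~\ref{3434}, to upgrade to $L^\infty$.) Your subsequent attempts---optimizing in $t$, bootstrapping via $P_t^\ast$, or proving $p_{ss}\in H^\theta_1$ and embedding---are unnecessary, and the gap you flag in the last one (transferring the fractional-derivative estimate from the $x$-variable to the $y$-variable) is a genuine obstacle you would have to resolve, but the whole detour is moot.

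For comparison, the paper's own proof of this lemma takes a genuinely different route: a Krylov-type duality argument. For $f\in C_0^\infty$ one solves the backward equation $\partial_t u=-Au-f$, $u(T,\cdot)=0$, and applies It\^o's formula to get $\mathbb{E}\int_0^T f(X_t)\,dt=\mathbb{E}\,u(0,X_0)-\mathbb{E}\,u(T,X_T)$; the parabolic estimate of Lemma~\ref{Backe} then yields $|\mathbb{E}\int_0^T f(X_t)\,dt|\le C\|f\|_{L^p}$ for $p>n/\alpha\vee 1$, and ergodicity plus Riesz representation produce a density in $L^{p'}$ with $p'<n/(n-\alpha)$. This explains the particular threshold in the statement: it is exactly the dual exponent coming from the admissible range in Lemma~\ref{Backe}. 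Your direct kernel argument is more elementary and gives more; the paper's duality argument is the one that generalizes when the kernel is not known explicitly but parabolic $L^q(L^p)$ estimates are available.
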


\begin{proof}
From Lemma \ref{Backe}, for each $f\in C^{\infty}_0(\mathbb{R}^n)$ and $T>0$, there is a unique solution $u\in L^{\infty}(0,T;H^{\vartheta}_{\infty}(\mathbb{R}^n))$ solving the following equation
\begin{equation*}
  \left\{
   \begin{aligned}
   & \partial_t u(t,x) = -Au(t,x)- f(x), \quad (t,x)\in [0,T]\times\mathbb{R}^n,\\
   & u(T,x)=0, \quad x\in \mathbb{R}^n
   \end{aligned}
   \right.
\end{equation*}
By It\^{o}'s formula, we have
\begin{align*}
  \mathbb{E}u(T,X_T)& =\int_{0}^{T}\mathbb{E}(\partial_t u(t,x)+Au(t,x))dt \\
    & = \mathbb{E} \int_{0}^{T} f(X_t)dt
\end{align*}
Then the a priori estimate implies that for all $(\frac{n}{\alpha}\vee 1)<p< \infty$,
\begin{equation*}
  \mathbb{E}\int_{0}^{T}|f(X_t)|dt \leq \|u\|_{L^{\infty}(0,T;L^{\infty}(\mathbb{R}^n))}\leq C \|f\|_{L^p}.
\end{equation*}
Since $C^{\infty}_0(\mathbb{R}^n)$ is dense in $L^p(\mathbb{R}^n)$, by the ergodic property of $X_t$, we get for all $(\frac{n}{\alpha}\vee 1)<p< \infty$,
\begin{equation*}
  |\int_{\mathbb{R}^n}f(x)\mu(dx)| \leq C \|f\|_{L^p}, \quad \forall f\in L^p(\mathbb{R}^n).
\end{equation*}
Thus $f \longmapsto\int_{\mathbb{R}^n}f(x)\mu(dx)$ is linear bounded functional on $L^p(\mathbb{R}^n)$. Then by Riesz’s representation theorem, the unique invariant measure $\mu$ has a density $p_{ss}\in p'$ with $1\leq  p'<\frac{n}{n-\alpha}$ for $n\geq 2$ and $ 1\leq p'<\infty$ for $n=1$.
\end{proof}

\subsection{The adjoint operator of generator}
In order to obtain a form of the stationary Fokker-Planck equation corresponding to the SDE (\ref{Lequ}), we need derive the adjoint operator $A^{\ast}$.

By assumption $(\bf{B})$, the diffusion coefficient $\sigma(x)$ is a invertible matrix, and $\sigma(x)$ and $\sigma^{-1}(x)$ are uniform bounded for all $x\in \mathbb{R}^n$.
After changing of variables $\sigma(x)y\rightarrow y$, we can rewrite the generator $A$ as
\begin{align}\label{generator2}
    Au(x)  = & b(x)\cdot \nabla u(x) +\int_{\mathbb{R}^{n}\setminus \{0\}}(u(x+y)-u(x))\frac{\det(\sigma^{-1}(x))}{|\sigma^{-1}(x)y|^{n+\alpha}}dy \nonumber\\
    = & b(x)\cdot \nabla u(x)+\int_{\mathbb{R}^{n}\setminus \{0\}}(u(x+y)-u(x))k(x,y) \nu(dy),
\end{align}
where
\begin{equation*}
 k(x,y)=\frac{1}{\det(\sigma(x))}\left(\frac{|y|}{|\sigma^{-1}(x)y|}\right)^{n+\alpha}.
\end{equation*}
By assumption $(\bf{B})$, $k(x,y)$ is a positive and bounded function on $\mathbb{R}^n \times \mathbb{R}^n$ satisfying
\begin{equation*}
  0<r_0 \leq k(x,y) \leq r_1, \quad k(x,y)=k(x,-y),
\end{equation*}
and for all $y,z\in \mathbb{R}^n$,
\begin{equation*}
  |k(x,y)-k(z,y)| \leq r_2 |x-z|^{\beta},
\end{equation*}
where $\beta \in (0,1)$ is same in assumption $(\bf{B})$.

Consider the generator $A$ with following form
\begin{align}
    Au(x)  = & b(x)\cdot \nabla u(x) +\int_{\mathbb{R}^{n}\setminus \{0\}}(u(x+y)-u(x))\frac{\det(\sigma^{-1}(x))}{|\sigma^{-1}(x)y|^{n+\alpha}}dy \nonumber\\
    = & b(x)\cdot \nabla u(x)+\int_{\mathbb{R}^{n}\setminus \{0\}}(u(x+y)-u(x))k(x,y) \nu(dy),
\end{align}
where
\begin{equation*}
 k(x,y)=\frac{1}{\det(\sigma(x))}\left(\frac{|y|}{|\sigma^{-1}(x)y|}\right)^{n+\alpha}.
\end{equation*}
For every $\varphi \in C_0^{\infty}(\mathbb{R}^n)$, by Fubini Theorem, we have
\begin{align}\label{innerproduct}
  \langle \varphi,Au \rangle_{L^2} = &  \int_{\mathbb{R}^n} \varphi(x)(b(x))\cdot \nabla u(x) dx \nonumber \\
     & + \int_{\mathbb{R}^{n}\setminus \{0\}}\int_{\mathbb{R}^n}\varphi(x-y) u(x)k(x-y,y)-\varphi(x)u(x)k(x,y) dx \nu(dy) \nonumber \\
     = & -\int_{\mathbb{R}^n} u(x) \text{div}[b(x)\varphi(x)] dx \nonumber \\
     & + \int_{\mathbb{R}^n} u(x) \int_{\mathbb{R}^{n}\setminus \{0\}} k(x-y,y)\varphi(x-y)-k(x,y)\varphi(x) \nu(dy)dx.
\end{align}
Since $k(x,y)=k(x,-y)$, the adjoint operator $A^{\ast}$ is given by
\begin{align}\label{adjointoperator}
 A^{\ast}\varphi(x) =& -\text{div} [b(x)\varphi(x)] +\int_{\mathbb{R}^{n}\setminus \{0\}}\varphi(x-y)k(x-y,y)-\varphi(x)k(x,y) \nu(dy) \nonumber\\
                = & - \text{div} [b(x)\varphi(x)]+\int_{\mathbb{R}^{n}\setminus \{0\}}(k(x-y,y)-k(x,y))\varphi(x-y)\nu(dy)\nonumber\\
              &+\int_{\mathbb{R}^{n}\setminus \{0\}}(\varphi(x-y)-\varphi(x))k(x,y) \nu(dy) \nonumber\\
                 = & -\text{div} [b(x)\varphi(x)]+ \lim_{\epsilon\searrow 0}\int_{\mathbb{R}^n\setminus \{|x-y|>\epsilon\}}\frac{k(y,x-y)-k(x,x-y)}{|x-y|^{n+\alpha}}\varphi(y)dy \nonumber\\                  &+\int_{\mathbb{R}^{n}\setminus \{0\}}(\varphi(x+y)-\varphi(x))k(x,y) \nu(dy) \nonumber\\
                =& A\varphi(x)-2 b(x)\cdot\nabla \varphi(x) -\text{div}(b(x))\varphi(x) + \mathcal{S}\varphi(x),
\end{align}
where $\mathcal{S}$ is a singular intergal operator defined by
\begin{equation}\label{singular}
    \mathcal{S}\varphi(x)=\lim_{\epsilon\searrow 0}\int_{ \{|x-y|>\epsilon\}} \mathcal{K}(x,x-y)\varphi(y)dy,
\end{equation}
with the kernel
\begin{equation*}
    \mathcal{K}(x,y)=\frac{k(x-y,y)-k(x,y)}{|y|^{n+\alpha}}=\frac{\det(\sigma^{-1}(x-y))}{|\sigma^{-1}(x-y)y|^{n+\alpha}}-\frac{\det(\sigma^{-1}(x))}{|\sigma^{-1}(x)y|^{n+\alpha}}.
\end{equation*}

\begin{remark}
Since $k(x,-y)=k(x,y)$, we have
\begin{align*}
 A^{\ast}\varphi(x) =& -\text{div} [b(x)\varphi(x)] +\int_{\mathbb{R}^{n}\setminus \{0\}}\varphi(x+y)k(x+y,y)-\varphi(x)k(x,y) \nu(dy) .
\end{align*}
When $n=1$, function $k(x,y)=|\sigma(x)|^{\alpha}$. So
\begin{equation*}
  A^{\ast}\varphi(x)=-\text{div} [b(x)\varphi(x)] - (-\Delta)^{\frac{\alpha}{2}}(|\sigma(x)|^{\alpha}\varphi(x)).
\end{equation*}
It is consistent with the form of Fokker-Planck equation which was given in \cite{SLDY2000}. The authors also establish the existence and uniqueness of weak solutions to Fokker-Planck equations when $b$ and $\sigma$ are Lipschitz in \cite{SLDY2000}.
\end{remark}

\subsection{Regularity of invariant measure}
The probability density of invariant measure $p_{ss}(x)$ satisfies the nonlocal elliptic equation $A^{\ast}p_{ss}(x)=0$ in weak sense:
\begin{equation*}
    \int_{\mathbb{R}^n} A\varphi(x)p_{ss}(x)dx=0, \quad \forall \varphi\in C^{\infty}_0(\mathbb{R}^n).
\end{equation*}
But in order to make sure that the conjugate variable which given in next section is well-defined, we need higher regularity of $p_{ss}$.
\begin{lemma}\label{singularoperator}
Assume that $({\bf A})$-$({\bf C})$ hold. Then\\
(i) Singular integral operator $\mathcal{S}$ is a bounded linear operator from $L^p(\mathbb{R}^n)$ to $L^p(\mathbb{R}^n)$, for all $1<p<\infty $.\\
(ii) $\mathcal{S}^{\ast}=-\mathcal{S}$ in $L^p(\mathbb{R}^n)$ with $1<p<\infty$.
\end{lemma}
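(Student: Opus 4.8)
The plan is to prove part (i) by exhibiting $\mathcal{S}$ as a Calder\'{o}n--Zygmund--type singular integral operator and invoking the Calder\'{o}n--Zygmund $L^p$ theory, and then to read off part (ii) from an antisymmetry of the kernel under the interchange $x\leftrightarrow y$.

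For (i), start from the kernel form furnished by (\ref{singular})--(\ref{adjointoperator}): up to the fixed constant $c_{\alpha}$,
\[
\mathcal{S}\varphi(x)=\lim_{\eps\searrow 0}\int_{\{|x-y|>\eps\}}K(x,y)\,\varphi(y)\,dy,\qquad
K(x,y)=\frac{k(y,x-y)-k(x,x-y)}{|x-y|^{n+\alpha}},
\]
which, setting $G(a,w):=\det(\sigma^{-1}(a))/|\sigma^{-1}(a)w|^{n+\alpha}$, can be written as a first difference in the base point, $K(x,y)=G(y,x-y)-G(x,x-y)$. By the uniform ellipticity $({\bf B})$ one has $G(a,w)\asymp|w|^{-n-\alpha}$ uniformly in $a$, and by $({\bf A})$--$({\bf C})$ the map $a\mapsto G(a,w)$ is H\"{o}lder (indeed Lipschitz, since $({\bf C})$ bounds $\nabla\sigma^{-1}$) with modulus $\lesssim |w|^{-n-\alpha}|a-a'|^{\beta}$; hence $|K(x,y)|\lesssim |x-y|^{\beta-n-\alpha}$ for $|x-y|\le 1$ and $|K(x,y)|\lesssim |x-y|^{-n-\alpha}$ for $|x-y|\ge 1$. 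The far-diagonal part $K\,1_{\{|x-y|>1\}}$ has an integrable kernel, uniformly in both variables, hence is bounded on every $L^p$ by Schur's test; so matters reduce to the near-diagonal piece, for which one also records the H\"{o}rmander-type regularity bounds
\[
|K(x,y)-K(x',y)|+|K(y,x)-K(y,x')|\lesssim \frac{|x-x'|^{\gamma}}{|x-y|^{n+\gamma}},\qquad |x-x'|\le\tfrac12|x-y|,
\]
for a suitable $\gamma\in(0,\beta]$, obtained by estimating the $a$- and $w$-derivatives of $G$.

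The genuine content is the cancellation hidden in the first-difference structure of $K$: to leading order $G(y,x-y)-G(x,x-y)\approx -\nabla_{a}G(x,x-y)\cdot(x-y)$, which is \emph{odd} in $x-y$ since $G(a,\cdot)$ is even, so that the principal value converges and it is this cancellation, rather than any pointwise size bound, that one must exploit. The plan is to convert this into the a priori bound $\|\mathcal{S}\varphi\|_{L^2}\lesssim\|\varphi\|_{L^2}$ by a Cotlar--Stein almost-orthogonality argument: cut $K$ dyadically in $|x-y|$, write $\mathcal{S}=\sum_{j}\mathcal{S}_j$, and bound $\|\mathcal{S}_j\mathcal{S}_k^{*}\|_{L^2\to L^2}+\|\mathcal{S}_j^{*}\mathcal{S}_k\|_{L^2\to L^2}\lesssim 2^{-\delta|j-k|}$ using the oddness together with the H\"{o}lder modulus of continuity of $\sigma$ and $\sigma^{-1}$; equivalently one may verify the hypotheses of the $T(1)$ theorem, which are essentially automatic here because $\mathcal{S}^{*}=-\mathcal{S}$ (part (ii)) and the kernel has the stated cancellation. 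Once the $L^2$ bound and the H\"{o}rmander estimates are in place, the Calder\'{o}n--Zygmund theorem upgrades $\mathcal{S}$ to a bounded operator on $L^p(\mathbb{R}^n)$ for every $1<p<\infty$. I expect this $L^2$/cancellation step to be the main obstacle: it is precisely where one must show that the H\"{o}lder continuity of the coefficients (and the range imposed on $\beta$) turns the apparent order-$\alpha$ singularity of the nonlocal operators into a genuine Calder\'{o}n--Zygmund bound, and any estimate that discards the oddness of $K$ is too lossy.

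For (ii), work with $\varphi,\psi\in C_0^{\infty}(\mathbb{R}^n)$ and the $\eps$-truncated kernels, where Fubini applies; relabelling $x\leftrightarrow y$ gives
\[
\int\!\!\int_{\{|x-y|>\eps\}}K(x,y)\varphi(y)\psi(x)\,dy\,dx=\int\!\!\int_{\{|x-y|>\eps\}}K(y,x)\varphi(x)\psi(y)\,dy\,dx,
\]
and, using $k(x,-w)=k(x,w)$,
\[
K(y,x)=\frac{k(x,y-x)-k(y,y-x)}{|x-y|^{n+\alpha}}=\frac{k(x,x-y)-k(y,x-y)}{|x-y|^{n+\alpha}}=-K(x,y).
\]
Letting $\eps\searrow 0$ yields $\langle\mathcal{S}\varphi,\psi\rangle=-\langle\varphi,\mathcal{S}\psi\rangle$ for all $\varphi,\psi\in C_0^{\infty}$; since, by (i), $\mathcal{S}$ is bounded on $L^p$ and on $L^{p'}$ and $C_0^{\infty}(\mathbb{R}^n)$ is dense in each, this identity extends to all $\varphi\in L^p$, $\psi\in L^{p'}$, i.e. $\mathcal{S}^{*}=-\mathcal{S}$ in $L^p(\mathbb{R}^n)$ for $1<p<\infty$.
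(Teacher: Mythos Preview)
For part (i) you and the paper both reach for Calder\'{o}n--Zygmund theory, but the emphasis is quite different. The paper's argument is essentially one line: it asserts that ``by Calder\'{o}n--Zygmund theory it suffices'' to verify the gradient bound $|\nabla_y\mathcal{K}(x,y)|\le C|y|^{-n-1}$, and then estimates the individual terms of $\nabla_y\mathcal{K}$ from $({\bf B})$ and $({\bf C})$. It does not discuss an initial $L^2$ bound, nor the fact that the pointwise size $|\mathcal{K}(x,y)|\lesssim |y|^{1-n-\alpha}$ is, for $\alpha>1$, strictly more singular on the diagonal than a standard Calder\'{o}n--Zygmund kernel. Your plan is more cautious on exactly this point: you isolate the oddness of the leading part of the first difference $G(y,x-y)-G(x,x-y)$ as the cancellation that makes the principal value converge, and you propose to secure the $L^2$ bound first (via Cotlar--Stein or $T(1)$) before invoking the Calder\'{o}n--Zygmund machinery to pass to general $p$. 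That is a genuinely different and more careful route, and your worry is legitimate---the paper glosses over precisely the step you identify as the crux. What you have not done is actually carry out the almost-orthogonality or $T(1)$ verification, which you yourself flag as the main obstacle; so your (i) is a correct diagnosis and a reasonable plan, but still a sketch.

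For part (ii) the two arguments are different but equally short. The paper never inspects the kernel: it applies the adjoint formula (\ref{adjointoperator}) a second time to obtain $A^{\ast\ast}\varphi=A\varphi+(\mathcal{S}+\mathcal{S}^{\ast})\varphi$ and reads off $\mathcal{S}^{\ast}=-\mathcal{S}$ from $A^{\ast\ast}=A$ on the reflexive space $L^p$. Your direct verification of the kernel antisymmetry $K(y,x)=-K(x,y)$ from the evenness $k(a,-w)=k(a,w)$, followed by a density argument, is an equivalent and slightly more elementary path to the same conclusion.
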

\begin{proof}
(i) By Calder\'{o}n-Zygmund theory, it suffices to show that the kernel $\mathcal{K}$ satisfies
\begin{equation*}
  |\nabla_y \mathcal{K}(x,y)| \leq C |y|^{-n-1} \quad \text{for almost every } x\in \mathbb{R}^n, y\neq 0.
\end{equation*}
From $(\bf{B})$ and $(\bf{C})$, for almost every $x\in \mathbb{R}^n, y\in \mathbb{R}^{n}\setminus \{0\}$, we have
\begin{align*}
  |\nabla_y \mathcal{K}(x,y)| \leq  & \frac{|\nabla_y\det(\sigma(x-y))|}{|\det(\sigma(x))|^2|\sigma^{-1}(x-y)y|^{n+\alpha}}+(n+\alpha)\frac{|\det(\sigma^{-1}(x-y))|}{|\sigma^{-1}(x-y)y|^{n+\alpha+1}}|\nabla_y (\sigma^{-1}(x-y)y)| \\
   & + (n+\alpha)\frac{|\det(\sigma^{-1}(x))|}{|\sigma^{-1}(x)y|^{n+\alpha-1}}|\nabla_y (\sigma^{-1}(x)y)|\\
   \leq & C |y|^{-n-1}.
\end{align*}
(ii) From (\ref{adjointoperator}), for each $\varphi(x)\in C^{\infty}_0(\mathbb{R}^n)$, we get
\begin{equation*}
  A^{\ast\ast}\varphi(x)= A^{\ast}\varphi(x)- (2 b(x)\cdot\nabla+\text{div}(b(x))^{\ast}\varphi(x) + \mathcal{S}^{\ast}\varphi(x)=A\varphi(x)+\mathcal{S}^{\ast}\varphi(x)+\mathcal{S}\varphi(x)=A\varphi(x)
\end{equation*}
Thus $\mathcal{S}^{\ast}\varphi(x)=-\mathcal{S}\varphi(x)$ for each $\varphi(x)\in C^{\infty}_0(\mathbb{R}^n)$. Since $L^p(\mathbb{R}^n)$ with $1<p<\infty$ is a reflexive Banach space and $C^{\infty}_0(\mathbb{R}^n)$ is dense in $L^p(\mathbb{R}^n)$, $\mathcal{S}^{\ast}=-\mathcal{S}$ in $L^p(\mathbb{R}^n)$ with $1<p<\infty$.
The proof is complete.
\end{proof}

\begin{lemma}\label{Lpsemigroup}
Assume that $({\bf A})$-$({\bf C})$ hold. Then\\
(i) The semigroup $P^{\ast}_t$ associated with Fokker-Planck equation (\ref{FKequ}) can be extended to a strongly continuous semigroup on $L^p(\mathbb{R}^n)$ for all $1\leq p \leq \infty$ with $\|P^{\ast}_t\|_{L(L^p)} \leq C$ for some constant $C>0$;\\
(ii) The resolvent set $\rho(A^{\ast})\supset (0,\infty)$, and $\|(\lambda I-A^{\ast})^{-1}\|_{L(L^p)}\leq \frac{C}{\lambda}$ for all $\lambda >0$.
\end{lemma}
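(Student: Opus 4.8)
\emph{Proof idea.} The plan is to obtain (i) from the $L^q$‑mapping properties of the Markov semigroup $P_t$ together with the duality $P^{\ast}_t=(P_t)^{\ast}$, and then to read off (ii) from the Laplace‑transform representation of the resolvent of a uniformly bounded $C_0$‑semigroup. First I record boundedness and the semigroup law. By Lemma~\ref{EXuBack}, $P_t$ is bounded on $L^q(\mathbb{R}^n)$ for every $q\in[1,\infty]$ with a norm that does not depend on $t$ (the convolution constant $\int_{\mathbb{R}^n}(t-s)[(t-s)^{1/\alpha}+|y|]^{-n-\alpha}\,dy$ is scale invariant, hence independent of $t-s$). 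Consequently its Banach‑space adjoint $P^{\ast}_t:=(P_t)^{\ast}$ is bounded on $(L^q)^{\ast}$, which gives $\|P^{\ast}_t\|_{L(L^p)}\le C$ for $1<p<\infty$ (take $q=p'$), for $p=\infty$ (take $q=1$), and for $p=1$ (take $q=\infty$); on $L^1$ one moreover has the contraction $\|P^{\ast}_t\|_{L(L^1)}\le 1$ for every $t\ge 0$, since $p(0,x;t,y)\ge 0$ and $\int_{\mathbb{R}^n}p(0,x;t,y)\,dy=1$. Equivalently one may simply set $P^{\ast}_t\varphi(y)=\int_{\mathbb{R}^n}p(0,x;t,y)\varphi(x)\,dx$ and obtain the same $L^p$ bounds from the two‑sided estimate \eqref{2sidesE} and Young's convolution inequality. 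The identity $P^{\ast}_{s+t}=P^{\ast}_sP^{\ast}_t$ is Chapman--Kolmogorov for $p(s,x;t,y)$, equivalently $(P_{s+t})^{\ast}=(P_sP_t)^{\ast}$.

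\emph{Strong continuity.} Given the uniform bounds, it suffices to verify $P^{\ast}_t\varphi\to\varphi$ in $L^p$ as $t\downarrow 0$ on the dense subspace $C^{\infty}_0(\mathbb{R}^n)$. For $1<p<\infty$ I would use reflexivity of $L^{p'}$: $P_t$ is a $C_0$‑semigroup on $L^{p'}$ because, for $f\in C^{\infty}_0$, Dynkin's formula gives $P_tf-f=\int_0^tP_s(Af)\,ds$ with $Af\in L^{p'}$ (the term $b\cdot\nabla f$ is bounded with compact support; the nonlocal term of the generator is bounded and decays like $|x|^{-n-\alpha}$ at infinity), so $\|P_tf-f\|_{L^{p'}}\le Ct\|Af\|_{L^{p'}}\to 0$; since the adjoint of a $C_0$‑semigroup on a reflexive Banach space is again a $C_0$‑semigroup, $P^{\ast}_t$ is a $C_0$‑semigroup on $L^p=(L^{p'})^{\ast}$. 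For $p=1$ reflexivity is unavailable, so for $\varphi\in C^{\infty}_0$ with $\mathrm{supp}\,\varphi\subset B_{R/2}$ I would split $\|P^{\ast}_t\varphi-\varphi\|_{L^1}\le\|P^{\ast}_t\varphi-\varphi\|_{L^1(B_R)}+\|P^{\ast}_t\varphi\|_{L^1(B_R^c)}$, bound the first term by $|B_R|^{1/2}\|P^{\ast}_t\varphi-\varphi\|_{L^2}\to 0$ (the $p=2$ case) and the second by $C_\varphi\,tR^{-\alpha}\to 0$, using that $P^{\ast}_t\varphi(y)\le c_2\|\varphi\|_{\infty}\,t\,|B_{R/2}|\,(|y|/2)^{-n-\alpha}$ for $|y|>R$ by \eqref{2sidesE}, and then extend by density. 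The case $p=\infty$ is the delicate one — $C^{\infty}_0$ is not dense there — and I would handle it by duality with $L^1$, the extension being weak‑$\ast$ continuous and strongly continuous on the natural invariant subspace.

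\emph{The resolvent bound.} Once (i) is available, $(P^{\ast}_t)_{t\ge 0}$ is a $C_0$‑semigroup on $L^p$ whose generator is $A^{\ast}$ and with $\|P^{\ast}_t\|_{L(L^p)}\le C$ for \emph{all} $t\ge 0$; hence the Bochner integral $R_\lambda:=\int_0^{\infty}e^{-\lambda t}P^{\ast}_t\,dt$ converges in $L(L^p)$ for each $\lambda>0$, equals $(\lambda I-A^{\ast})^{-1}$, and satisfies $\|R_\lambda\|_{L(L^p)}\le\int_0^{\infty}e^{-\lambda t}\|P^{\ast}_t\|_{L(L^p)}\,dt\le C/\lambda$, so that $(0,\infty)\subset\rho(A^{\ast})$ with the stated bound. (Alternatively, Lemma~\ref{Backe} applied to the stationary problem with a time‑independent source already yields a $\lambda$‑uniform $L^p$ estimate for $(\lambda I-A)^{-1}$, and hence by transposition for $(\lambda I-A^{\ast})^{-1}$, but the sharp rate $C/\lambda$ is cleanest through the semigroup.)

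\emph{Expected main obstacle.} The crux is the \emph{uniform}‑in‑$t$ operator bound $\|P^{\ast}_t\|_{L(L^p)}\le C$ on all of $[0,\infty)$ — not merely on bounded time intervals — since it is exactly this that makes the resolvent estimate in (ii) come out as $C/\lambda$ rather than $C/(\lambda-\omega)$; on $L^1$ it is free from the contraction property, but it must then be propagated to the remaining exponents (e.g.\ by interpolation between the $L^1$ and $L^\infty$ ends, once the $L^\infty$ bound is secured). This is intertwined with the second delicate point, the $p=\infty$ endpoint of strong continuity, where the standard density argument is not directly available.
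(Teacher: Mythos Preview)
Your proposal is correct and takes a genuinely different route from the paper. The paper argues directly on the kernel side: on $L^1$ it uses probability preservation ($\int p(0,x;t,y)\,dy=1$) to get a contraction, on $L^\infty$ it invokes the two--sided estimate \eqref{2sidesE} together with the continuity statement \eqref{Con}, and then interpolates between these endpoints; part (ii) is then read off from Hille--Yosida--Phillips exactly as you do. You instead pull everything back to $P_t$ via duality: the uniform $L^q$--bounds for $P_t$ come from Lemma~\ref{EXuBack} (with the scale--invariance observation), and strong continuity of $P^{\ast}_t$ on $L^p$, $1<p<\infty$, is obtained by first showing $P_t$ is $C_0$ on $L^{p'}$ through Dynkin's formula and then invoking reflexivity. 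Your $L^1$ argument (near/far splitting combined with the $L^2$ case and the tail bound from \eqref{2sidesE}) is an independent replacement for the paper's one--line probability argument.

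What each approach buys: the paper's endpoint--plus--interpolation argument is shorter and avoids any functional--analytic machinery, but as written it is a bit loose (the displayed $L^\infty$ and interpolated bounds carry a spurious $t^{-n/\alpha}$ factor, which is really the $L^1\!\to\!L^\infty$ smoothing rate rather than the $L^\infty\!\to\!L^\infty$ bound). Your duality/reflexivity route is cleaner for $1<p<\infty$ and makes the uniform--in--$t$ constant transparent via scale invariance; you are also more honest about the $p=\infty$ endpoint, where strict strong continuity on all of $L^\infty$ fails in general and one must restrict to $BUC$ or settle for weak--$\ast$ continuity --- the paper's reference to \eqref{Con} is likewise only stated for bounded uniformly continuous $f$. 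Either way, part (ii) follows identically from the Laplace representation of the resolvent once the uniform bound $\|P^{\ast}_t\|\le C$ for all $t\ge 0$ is in hand.
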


\begin{proof}
By theorem \ref{ExD}, for every $f \in L^1(\mathbb{R}^n)$, let $p_0=|f|/\|f\|_{L^1}$. Then $p_0$ is a probability density, and the probability density at $t$ for all $t>0$ is given by
\begin{equation*}
    p_t(y)=\int_{\mathbb{R}^n}p_0(x)p(0,x;t,y)dx.
\end{equation*}
Note that have $P^{\ast}_t f = \|f\|_{L^1}p_t$. From the definition of transition probability, $p_t \in L^1(\mathbb{R}^n)$ and
\begin{equation*}
    \|p_t\|_{L^1}=\|p_0\|_{L^1}, \quad t\geq 0.
\end{equation*}
So for $\forall t>0$, $\|P^{\ast}_t f\|_{L^1}=\|f\|_{L^1}$, and $P^{\ast}_t$ is a strongly continuous contraction semigroup on $L^1(\mathbb{R}^n)$.

The two-side estimate in theorem \ref{tpdE} shows that for each $f \in L^{\infty}(\mathbb{R}^n)$,
\begin{equation*}
    \|P^{\ast}_t f\|_{L^{\infty}}\leq C t^{-\frac{n}{\alpha}}\|f\|_{L^{\infty}}, \quad \forall t>0.
\end{equation*}
for some constant $C>0$. Moreover, we have
\begin{equation*}
    \lim_{t\rightarrow 0}P^{\ast}_t f =f \quad \text{in} \ L^{\infty}(\mathbb{R}^n).
\end{equation*}
So $P^{\ast}_t$ is a strongly continuous semigroup on $L^{\infty}(\mathbb{R}^n)$ with $\|P^{\ast}_t\|_{L(L^{\infty})}\leq C$.

By interpolation inequality, for all $f\in L^{1}\cap L^{\infty}$, $1<p<\infty$
\begin{equation*}
    \|P^{\ast}_t f\|_{L^p} \leq  Ct^{-\frac{n}{\alpha p'}}\|f\|_{L^p}, \quad \forall t>0,
\end{equation*}
where $\frac{1}{p}+\frac{1}{p'}=1$. So $P^{\ast}_t$ is a strong strongly continuous semigroup on $L^p$ with $\|P^{\ast}_t\|_{L(L^p)}\leq C$.

The Hille-Yosida-Phillips Theorem implies that the resolvent set $\rho(A^{\ast})\supset (0,\infty)$. Moreover, $\|(\lambda I-A^{\ast})^{-1}\|_{L(L^p)}\leq \frac{C}{\lambda}$ for all $\lambda >0$.
\end{proof}

Now we consider the weighted $L^1$ space
\begin{equation*}
 \mathbf{L}^1 = \{ f\in L^1 | \int_{\mathbb{R}^n}|x||f(x)| dx < \infty \} ,
\end{equation*}
which equipped with following $\mathbf{L}^1 $ norm
\begin{equation*}
    \|f\|_{\mathbf{L}^1} =\||x|f\|_{L^1}= \int_{\mathbb{R}^n}|f(x)||x| dx.
\end{equation*}
Let $\mathcal{L}^p=L^p \cap \mathbf{L}^1$, $1\leq p \leq \infty$ be a Banach space equipped with norm
\begin{equation*}
    \|f\|_{\mathcal{L}^p}=\|f\|_{\mathbf{L}^1}+\|f\|_{L^p}.
\end{equation*}
We study the adjoint semigroup $P^{\ast}_t$ on $ \mathcal{L}^p$ in the following Lemma.

\begin{lemma}\label{WL2}
Assume that $({\bf A})$-$({\bf D})$ hold. Then\\
(i) The semigroup $P^{\ast}_t$ is a strongly continuous semigroup on $\mathcal{L}^p$ for all $1\leq p \leq \infty$ with $\|P^{\ast}_t\|_{L(\mathcal{L}^p)} \leq C$ for some constant $C\geq 1$.\\
(ii) The resolvent set $\rho(A^{\ast})\supset (0,\infty)$, and $\|(\lambda I-A^{\ast})^{-1}\|_{L(\mathcal{L}^p)}\leq \frac{C}{\lambda}$ for all $\lambda >0$.
\end{lemma}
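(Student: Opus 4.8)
The plan is to bootstrap from Lemma \ref{Lpsemigroup}, which already supplies the $L^p$-part of every assertion, so the only genuinely new content is control of the weighted seminorm $\||x| \, P_t^\ast f\|_{L^1}$. First I would test the statement on nonnegative $f \in L^1 \cap \mathbf{L}^1$, for which $P_t^\ast f = \|f\|_{L^1} p_t$ with $p_t$ the probability density at time $t$ of the process started from the normalized density $p_0 = f/\|f\|_{L^1}$. Then by definition $\||x| P_t^\ast f\|_{L^1} = \|f\|_{L^1} \, \mathbb{E}|X_t|$, where $X_0$ has law $p_0 \, dx$. Since $\mathbb{E}|X_0| = \|f\|_{\mathbf{L}^1}/\|f\|_{L^1} < \infty$, Lemma \ref{GEME} (the moment estimate for SDE \eqref{LequU}, applied with the unperturbed drift, i.e. $F\equiv 0$) gives $\mathbb{E}|X_t| \le \mathbb{E}\sqrt{1+|X_0|^2} + C \le 1 + \mathbb{E}|X_0| + C$, whence
\begin{equation*}
    \||x| P_t^\ast f\|_{L^1} \le \|f\|_{\mathbf{L}^1} + (1+C)\|f\|_{L^1}, \qquad \forall t > 0 .
\end{equation*}
For general real- or complex-valued $f \in L^1 \cap \mathbf{L}^1$ I would split $f$ into its positive and negative parts (and real/imaginary parts), apply the bound to each piece, and use linearity and the triangle inequality; the resulting constant is still absolute. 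Combining this with $\|P_t^\ast f\|_{L^p}\le C\|f\|_{L^p}$ from Lemma \ref{Lpsemigroup}(i) yields $\|P_t^\ast f\|_{\mathcal{L}^p} \le C \|f\|_{\mathcal{L}^p}$ uniformly in $t$, with $C \ge 1$; a density argument ($L^1 \cap L^\infty \cap \mathbf{L}^1$ is dense in $\mathcal{L}^p$ for $1\le p<\infty$, and the $p=\infty$ case is handled directly using the two-sided kernel estimate) extends the bound and the semigroup property to all of $\mathcal{L}^p$.

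Next I would verify strong continuity on $\mathcal{L}^p$: one must show $\|P_t^\ast f - f\|_{\mathbf{L}^1} \to 0$ as $t \searrow 0$, since $L^p$-strong continuity is already known. For $f$ in the dense class (smooth, compactly supported, nonnegative), $\||x|(P_t^\ast f - f)\|_{L^1} \le \|p_t - p_0\|_{\mathbf{L}^1}$; I would control this by writing $\||x|(p_t-p_0)\|_{L^1} \le \int_{|x|\le R}|x||p_t-p_0|\,dx + \int_{|x|>R}|x|p_t\,dx + \int_{|x|>R}|x|p_0\,dx$. The first term tends to $0$ with $t$ by $L^1$-continuity (Theorem \ref{tpdE}(vi) / \eqref{Con}) on the bounded region; the last term is small for $R$ large since $f$ has compact support; the middle term is handled by the uniform moment bound $\mathbb{E}|X_t| \le C'$ together with a uniform-integrability / tightness argument — here I would use the two-sided estimate \eqref{2sidesE} to get, e.g., $\int_{|x|>R}|x| p_t(x)\,dx \le C \int_{|x|>R}|x|\, t\,(t^{1/\alpha}+|x|)^{-n-\alpha}\,dx$ plus the contribution from the initial mass, which is $O(R^{\alpha-1-n}) \cdot$(const) uniformly for $t$ in a bounded interval, hence $\to 0$ as $R \to \infty$ uniformly in small $t$. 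Then the uniform bound from part (i) upgrades strong continuity from the dense class to all of $\mathcal{L}^p$. Part (ii) is then immediate: since $(P_t^\ast)_{t\ge0}$ is a strongly continuous semigroup on $\mathcal{L}^p$ with $\|P_t^\ast\|_{L(\mathcal{L}^p)}\le C$, the Hille–Yosida–Phillips theorem gives $(0,\infty)\subset\rho(A^\ast)$ and $\|(\lambda I - A^\ast)^{-1}\|_{L(\mathcal{L}^p)} = \|\int_0^\infty e^{-\lambda t}P_t^\ast\,dt\|_{L(\mathcal{L}^p)} \le C/\lambda$.

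The main obstacle I anticipate is the uniformity in $t$ of the tail estimate needed for strong continuity (the middle term above): one needs $\sup_{0\le t\le 1}\int_{|x|>R}|x|p_t(x)\,dx \to 0$ as $R\to\infty$, and although the moment bound of Lemma \ref{GEME} controls $\sup_t \mathbb{E}|X_t|$, that alone does not give equi-tightness of the first moments; the two-sided heat-kernel bound \eqref{2sidesE} has to be invoked carefully — in particular it is only a \emph{local} (finite-horizon) estimate with constants depending on $T$, so one restricts to $t\in[0,1]$ and estimates $\int_{|x|>R}|x|\int_{\mathbb{R}^n}p_0(\xi)\,t(t^{1/\alpha}+|x-\xi|)^{-n-\alpha}\,d\xi\,dx$ by splitting according to $|x-\xi| \gtrless |x|/2$, using that $p_0$ is concentrated (compact support) so $|\xi|$ is bounded and $|x-\xi|\ge|x|/2\ge R/2$ on the relevant set. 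This yields a bound of order $\int_{|x|>R}|x|^{-n-\alpha+1}\,dx \lesssim R^{1-\alpha} \to 0$ (using $1<\alpha$, and $n\ge1$), uniformly for $t\in(0,1]$, which closes the argument. Everything else — the split of $f$ into signed parts, the density arguments, and the application of Hille–Yosida–Phillips — is routine.
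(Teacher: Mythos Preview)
Your proposal is correct and follows essentially the same approach as the paper: normalize $f$ to a probability density, invoke the moment estimate of Lemma~\ref{GEME} (with $F\equiv 0$) to control $\||x|P_t^\ast f\|_{L^1}$, combine with the $L^p$-bound from Lemma~\ref{Lpsemigroup}, and then apply Hille--Yosida--Phillips for part~(ii). In fact you supply considerably more detail than the paper does---in particular the careful treatment of the additive constant in the moment bound, the signed-$f$ decomposition, and the tail/strong-continuity argument in $\mathbf{L}^1$ are all left implicit (or omitted) in the paper's proof.
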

\begin{proof}
For each $f\in \mathcal{L}^p$, we have $|x|f\in L^1$. Let $p_0 =|f|/\|f\|_{L^1}$, and $X_0$ is a random variable on $(\Omega, \mathcal{F}, \mathbb{P})$ with density $p_0$. Consider the SDE (\ref{Lequ}) with $F(t)=0$. Then from Lemma (\ref{GEME}), $\mathbb{E}|X_t|<\infty$ for all $t>0$. Thus
\begin{equation*}
     \|P^{\ast}_t f\|_{\mathbf{L}^1}=\||x|(P^{\ast}_t f)\|_{L^1}\leq C\||x|f\|_{L^1}=C\| f\|_{\mathbf{L}^1}, \quad \forall t>0.
\end{equation*}
Moreover, from Lemma \ref{Lpsemigroup}, we have
\begin{equation*}
    \|P^{\ast}_t f\|_{L^p}\leq C\|f\|_{L^p}, \quad \forall t>0.
\end{equation*}
Thus the semigroup $P^{\ast}_t$ is a strongly continuous semigroup on $\mathcal{L}^p$ for all $1\leq p \leq \infty$ and
\begin{equation*}
    \|P^{\ast}_t f\|_{\mathcal{L}^p} \leq C\|f\|_{\mathcal{L}^p}, \quad \forall t>0.
\end{equation*}
where $C\geq 1$ is a constant. So (i) is proved.
The Hille-Yosida-Phillips Theorem implies the result in (ii).
\end{proof}

\begin{lemma}\label{3434}
Assume that $({\bf A})$-$({\bf D})$ hold. Then \\
(i) The density $p_{ss}$ is positive, i.e. $p_{ss}(x)>0$ for all $x\in \mathbb{R}^n$;\\
(ii) For each $1\leq p \leq \infty$, the density of the invariant measure $p_{ss}\in L^p(\mathbb{R}^n)$ and $p_{ss}\in \mathcal{L}^p$.
\end{lemma}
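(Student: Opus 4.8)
The plan is to leverage the fixed-point identity $p_{ss}=P^{\ast}_t p_{ss}$, valid for every $t>0$ because $\mu$ is invariant, together with the two-sided heat-kernel bound (\ref{2sidesE}) and the first-moment estimate already established in Lemma \ref{1orderM}.

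For part (i), fix any $t>0$ and write $p_{ss}(y)=\int_{\mathbb{R}^n}p(0,x;t,y)\,p_{ss}(x)\,dx$. The upper bound in (\ref{2sidesE}) gives $\sup_x p(0,x;t,y)\le c_2\,t^{-n/\alpha}$, so this integral against the probability density $p_{ss}\in L^1$ converges and, since $p(0,x;t,y)$ is jointly continuous (Theorem \ref{tpdE}), it defines a continuous representative of $p_{ss}$; we work with this representative. The lower bound in (\ref{2sidesE}) yields $p(0,x;t,y)\ge c_1\,t\,(t^{1/\alpha}+|x-y|)^{-n-\alpha}>0$ for all $x,y$, and since $p_{ss}\ge 0$ with $\int_{\mathbb{R}^n}p_{ss}\,dx=1$, the integral is strictly positive. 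Hence $p_{ss}(y)>0$ for every $y\in\mathbb{R}^n$.

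For part (ii), note first that $p_{ss}$ is a probability density, so $p_{ss}\in L^1(\mathbb{R}^n)$ with unit mass. The bound $\sup_x p(0,x;t,y)\le c_2 t^{-n/\alpha}$ used above is precisely the smoothing estimate $\|P^{\ast}_t f\|_{L^{\infty}}\le C\,t^{-n/\alpha}\|f\|_{L^1}$ that underlies Lemma \ref{Lpsemigroup}; applying it to $p_{ss}=P^{\ast}_1 p_{ss}$ gives $p_{ss}\in L^{\infty}(\mathbb{R}^n)$, and therefore $p_{ss}\in L^1\cap L^{\infty}\subset L^p(\mathbb{R}^n)$ for every $p\in[1,\infty]$. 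For the weighted component, Lemma \ref{1orderM} states that a random variable with law $\mu$ has a finite first moment, i.e. $\int_{\mathbb{R}^n}|x|\,p_{ss}(x)\,dx=\int_{\mathbb{R}^n}|x|\,\mu(dx)<\infty$, so $p_{ss}\in\mathbf{L}^1$. Combining the two facts, $p_{ss}\in\mathcal{L}^p=L^p\cap\mathbf{L}^1$ for every $p\in[1,\infty]$.

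The whole argument is a short bootstrap, so there is no genuine difficulty; the only points deserving care are (a) passing to the continuous version of $p_{ss}$ before claiming pointwise positivity, and (b) observing that the a priori integrability $p_{ss}\in L^{p'}$ with $p'<n/(n-\alpha)$ obtained in Lemma \ref{density} is in fact not needed here — the single application of the $L^1\to L^{\infty}$ smoothing of $P^{\ast}_t$ to the fixed point $p_{ss}$ already upgrades its integrability to the full range $[1,\infty]$.
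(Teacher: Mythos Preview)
Your proof is correct and follows essentially the same route as the paper: invariance gives $p_{ss}(y)=\int p(0,x;t,y)p_{ss}(x)\,dx$, the two-sided kernel bound (\ref{2sidesE}) then yields both strict positivity and the $L^\infty$ bound, and the first-moment estimate from Lemma \ref{1orderM} supplies the weighted $\mathbf{L}^1$ part. Your observation that Lemma \ref{density} is not actually needed here (since $p_{ss}\in L^1$ already and the $L^1\!\to\!L^\infty$ smoothing suffices) is a valid sharpening of the paper's exposition, and your care in passing to the continuous representative before asserting pointwise positivity is a detail the paper glosses over.
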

\begin{proof}
Lemma \ref{density} shows that the density of invariant measure $p_{ss} \in L^{p'}(\mathbb{R}^n)$, with $p'<\frac{n}{n-\alpha}$ for $n\geq 2$ and $ p'<\infty$ for $n=1$.  In addition, from definition of invariant measure, the density of invariant measure $p_{ss}$ satisfies
\begin{equation*}
    p_{ss}(y)=\int_{\mathbb{R}^n}p_{ss}(x)p(0,x;t,y)dx
\end{equation*}
for all $t>0$. The two-sides estimates implies that $p(0,x;t,y)>0$. Thus we obtain that $p_{ss}(x)>0$ for each $x\in\mathbb{R}^n$, and $p_{ss}(x)\in L^{\infty}(\mathbb{R}^n)$. Then the interpolation inequality implies that $p_{ss} \in L^p(\mathbb{R}^n)$ for all $1\leq p\leq \infty$.
From Lemma \ref{1orderM}, $\int_{\mathbb{R}^n}|x|p_{ss}(x)dx < \infty$. Thus we have $p_{ss} \in \mathcal{L}^p$ for all $1\leq p \leq \infty$.
\end{proof}

Now we consider the following nonlocal elliptic equation
\begin{equation}\label{nonlocalAE}
  A^{\ast}u(x)-\lambda u(x)=f(x), \quad x\in \mathbb{R}^n,
\end{equation}
where $A^{\ast}$ is defined as (\ref{adjointoperator}), $f\in L^p(\mathbb{R}^n)$ for some $1< p< \infty$, and $\lambda \geq 0$.
The solvability and a prior estimate for nonlocal elliptic equations (\ref{nonlocalAE}) are given as following.

\begin{lemma}\label{3535}
Assume that $({\bf A})$-$({\bf C})$ hold. For some $\lambda_1 \geq 1$ large enough and for all $\mu \geq \lambda_1$, $\vartheta \in [0,\alpha)$, and for $f\in L^p(\mathbb{R}^n)$,  there exists a unique solution $u\in H^{\vartheta}_p(\mathbb{R}^n)$ to the following nonlocal elliptic equation:
\begin{equation}\label{nonlocaleq}
    Au-2b(x)\cdot\nabla u(x)-\mu u(x)=f(x), \quad x\in \mathbb{R}^n.
\end{equation}
Moreover, there is a positive constant $N$, independent of $u$, such that
\begin{equation}\label{apriori}
    \|u\|_{H^{\vartheta}_p} \leq N\|f\|_{L^p}.
\end{equation}
\end{lemma}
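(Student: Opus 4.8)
The plan is to reduce equation \eqref{nonlocaleq} to the already-studied backward problem \eqref{bacdK} and then invoke Lemma \ref{Backe}. First I would rewrite the operator on the left-hand side. Observe that $A - 2b(x)\cdot\nabla$ is again an operator of the same structure as $A$, namely a nonlocal operator with bounded measurable jump kernel plus a bounded, divergence-controlled drift $-b(x)$: writing $\tilde b(x) := -b(x)$, assumptions $({\bf A})$--$({\bf C})$ (Hölder continuity, uniform ellipticity of $\sigma$, and uniform boundedness of $\tilde b$, $\mathrm{div}(\tilde b)$) hold for $\tilde A u := A u - 2b\cdot\nabla u = \tilde b\cdot\nabla u + \int (u(x+y)-u(x))k(x,y)\nu(dy)$. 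Hence $\tilde A$ is the generator of an SDE of the form \eqref{Lequt}, and all of Theorem \ref{tpdE}, Lemma \ref{EXuBack} and Lemma \ref{Backe} apply verbatim to $\tilde A$.

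Next I would pass from the elliptic equation to a parabolic one. Consider, for $\lambda=\mu$, the backward Cauchy problem
\begin{equation*}
  \partial_s w(s,x) = -\tilde A(s) w(s,x) + \mu\, w(s,x) - f(x), \qquad w(t,x)=0,
\end{equation*}
which is exactly \eqref{bacdK} with $A$ replaced by $\tilde A$ and $g(s,x)\equiv f(x)$. By Lemma \ref{Backe} (applied with $\tilde A$, with $q=q'$ chosen so that the admissibility condition $\tfrac{n}{p}+\tfrac{\alpha}{q} < \alpha-\vartheta+\tfrac{n}{p}+\tfrac{\alpha}{q}$ holds — which is automatic once $\vartheta<\alpha$ — and with $p'=p$) there is a unique mild solution $w(s,x)=\int_s^t \tilde P_{s,v} f(x)\,dv$ satisfying
\begin{equation*}
  (1\vee\mu)^{1-\vartheta/\alpha}\,\|w\|_{L^\infty(s,t;H^\vartheta_p)} \le c\,\|f\|_{L^p}.
\end{equation*}
Since $\tilde A$ is time-homogeneous, $\tilde P_{s,v}=\tilde P_{v-s}$, so $w(s,x)=\int_0^{t-s}\tilde P_r f(x)\,dr$ depends on $s$ only through $t-s$; letting $t\to\infty$ (the above bound being uniform in $t$) the function $u(x):=\int_0^\infty \tilde P_r f(x)\,dr$ is well defined in $H^\vartheta_p$ provided $\mu>0$ is large enough that the $L^p\to L^p$ norm of $\tilde P_r$ decays — this is where the largeness of $\lambda_1$ enters, via the exponential decay $\|\tilde P_r\|_{L(L^p)}\lesssim e^{-c\mu r}$ coming from the zeroth-order term $-\mu$. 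Then $u$ solves $\tilde A u - \mu u = f$, i.e. \eqref{nonlocaleq}, and the estimate $\|u\|_{H^\vartheta_p}\le N\|f\|_{L^p}$ follows from integrating the parabolic bound in $r$ (or directly from the stationary resolvent estimate). Uniqueness follows from the maximum-principle/uniqueness part of Lemma \ref{Backe} applied to the difference of two solutions, or from $\mu\in\rho(\tilde A)$.

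The main obstacle I anticipate is making the passage $t\to\infty$ rigorous and pinning down the dependence on $\mu$: Lemma \ref{Backe} is stated on a finite interval $(0,t)$ with a constant that, a priori, could depend on $t$, so I need the $\mu$-weighted form of the estimate (which is uniform in $t$) together with the contraction property $\|\tilde P_r\|_{L(L^p)}\le C$ from the semigroup theory (cf. Lemma \ref{Lpsemigroup}) to justify that $\int_0^\infty \tilde P_r f\,dr$ converges in $L^p$ and in $H^\vartheta_p$ once $\mu\ge\lambda_1$ with $\lambda_1$ large. A clean alternative, avoiding the limit entirely, is to work directly with the resolvent: show $\mu\in\rho(\tilde A)$ on $L^p$ for $\mu\ge\lambda_1$ (Hille--Yosida, as in Lemma \ref{Lpsemigroup}), set $u=(\mu I-\tilde A)^{-1}(-f)$, and then upgrade the regularity from $L^p$ to $H^\vartheta_p$ by feeding $u$ back into the fractional-derivative estimate \eqref{FDE} through the representation $u=\int_0^\infty e^{-\mu r}\tilde P_r(-f)\,dr$ and Minkowski's inequality, exactly as in the proof of Lemma \ref{EXuBack}. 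I would present the resolvent version as the main line of argument and note the parabolic-limit version as a remark.
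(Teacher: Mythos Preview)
Your proposal is correct, and both the resolvent route and the parabolic-limit route you sketch would go through (the obstacle you flag about constants in Lemma~\ref{Backe} depending on $t$ is real, but your workaround via the semigroup property for $r\ge 1$ combined with the short-time bound of Lemma~\ref{EXuBack} on $(0,1]$ handles it). The paper, however, uses a different and somewhat slicker device for the a priori estimate which sidesteps the $t\to\infty$ issue entirely: given any solution $u\in H^\vartheta_p$ of \eqref{nonlocaleq}, it fixes a time-cutoff $\phi\in C^\infty_c(0,T)$ and sets $\hat u(t,x):=\phi(t)u(x)$, so that $\hat u$ solves the parabolic equation $\partial_t\hat u = \tilde A\hat u - \mu\hat u + (u\phi'-f\phi)$ on the \emph{finite} interval $(0,T)$ with zero terminal datum. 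Lemma~\ref{Backe} applied to this equation yields
\[
\|u\|_{H^\vartheta_p}\|\phi\|_{L^\infty}\;\lesssim\;\mu^{-1}\bigl(\|u\|_{L^p}\|\phi'\|_{L^\infty}+\|f\|_{L^p}\|\phi\|_{L^\infty}\bigr),
\]
and for $\mu$ large the $\|u\|_{L^p}$ term is absorbed into the left-hand side. Existence is handled exactly as in your proposal, via the resolvent formula $u(x)=-\mathbb E_x\int_0^\infty e^{-\mu t}f(Y_t)\,dt$ for smooth $f$, followed by density. The cutoff trick has the advantage of using Lemma~\ref{Backe} verbatim on a fixed finite interval, with no need to track how constants behave as $t\to\infty$; your resolvent-integral argument, on the other hand, gives an explicit representation of $u$ and makes the role of $\mu$ as a Laplace parameter more transparent, at the cost of the extra uniformity step you already identified.
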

\begin{proof}
If $f\in C^{\infty}_c(\mathbb{R}^n)$, then we can obtain a unique smooth solution $u$ for (\ref{nonlocaleq}) by
\begin{equation*}
  u(x)=-\mathbb{E}_x \int^{\infty}_{0}e^{-\mu t} f(Y_t)dt,
\end{equation*}
where $(Y_t)_{t\geq 0}$ is the Markov process associated to the operator $\hat{A}u=(A-2b(x)\cdot\nabla) u$. Now we show the a priori estimate (\ref{apriori}). Suppose $u\in H^{\vartheta}_p(\mathbb{R}^n)$ satisfies (\ref{nonlocaleq}). Let $T>0$ and $\phi(t)$ be a nonnegative and nonzero smooth function with support $(0,T)$. Let $\hat{u}(t,x)=\phi(t)u(x)$. Then
\begin{equation*}
    \partial_t \hat{u}= A\hat{u}-2b(x)\cdot\nabla \hat{u}-\mu \hat{u} +(u\phi'-f\phi ).
\end{equation*}
By Lemma \ref{Backe}, we have
\begin{equation*}
    \|u\|_{H^{\vartheta}_p}\|\phi\|_{L^{\infty}}\leq  \frac{c}{\mu} (\|u\|_{L^p}\|\phi'\|_{L^{\infty}}+\|f\|_{L^p}\|\phi\|_{L^{\infty}}).
\end{equation*}
Letting $\mu$ be large enough, we get the a priori estimate
\begin{equation}
    \|u\|_{H^{\vartheta}_p} \leq N\|f\|_{L^p}.
\end{equation}
Then by a dense argument, we get the unique solution $u\in H^{\vartheta}_p(\mathbb{R}^n)$ of (\ref{nonlocaleq}). The result follows.
\end{proof}

Now we consider the weak solution of nonlocal elliptic equation (\ref{nonlocalAE}).
A function $u\in L^p$ is called weak solution to the nonlocal elliptic equation (\ref{nonlocalAE}) if
\begin{equation*}
  \int_{\mathbb{R}^n}u(x)(A\varphi(x)+\lambda\varphi(x))dx=\int_{\mathbb{R}^n}f(x)\varphi(x)dx, \quad \forall\varphi \in C^{\infty}_0(\mathbb{R}^n).
\end{equation*}
We now state the following a priori estimate of weak solution.

\begin{theorem}\label{Regularitynonlocal}
Assume that conditions $({\bf A})$-$({\bf D})$ hold. Let $f\in L^p(\mathbb{R}^n)$ with some $1<p<\infty$, and let $u\in L^p(\mathbb{R}^n)$ be a weak solution to the nonlocal elliptic equation (\ref{nonlocalAE}). Then $u \in H^{\vartheta}_{p}(\mathbb{R}^n)$ for all $\vartheta \in [0,\alpha)$.
\end{theorem}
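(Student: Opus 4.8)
The plan is to bootstrap regularity from the weak formulation by recognizing that a weak solution of $A^{\ast}u - \lambda u = f$ is, after unpacking the structure of $A^{\ast}$ derived in~(\ref{adjointoperator}), a weak solution of a nonlocal elliptic equation whose principal part is exactly the operator $\widehat{A} = A - 2b(x)\cdot\nabla$ treated in Lemma~\ref{3535}. Recall from~(\ref{adjointoperator}) that
\[
A^{\ast}\varphi = A\varphi - 2b(x)\cdot\nabla\varphi - \operatorname{div}(b(x))\varphi + \mathcal{S}\varphi .
\]
Using Lemma~\ref{singularoperator}(ii), namely $\mathcal{S}^{\ast} = -\mathcal{S}$ on $L^{p}$, together with the self-adjointness bookkeeping of the first-order terms, the weak identity $\int u(A\varphi+\lambda\varphi) = \int f\varphi$ for all $\varphi\in C_0^\infty$ can be rewritten as the weak form of
\[
\widehat{A}u - (\mu) u = \widetilde f, \qquad \widetilde f := f + \big(\operatorname{div}(b) - \mathcal{S}\big)u + (\lambda - \mu) u ,
\]
for any fixed $\mu$; here I have simply moved all lower-order contributions of $u$ to the right-hand side. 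The point is that $\widetilde f\in L^p$: $f\in L^p$ by hypothesis, $\operatorname{div}(b)\in L^\infty$ by~$({\bf C})$ so $\operatorname{div}(b)\,u\in L^p$, and $\mathcal{S}u\in L^p$ by Lemma~\ref{singularoperator}(i). Hence $u$ is an $L^p$ weak solution of a nonlocal elliptic equation of the type solved in Lemma~\ref{3535}.

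Next I would invoke uniqueness. Choose $\mu\ge\lambda_1$ as in Lemma~\ref{3535}, so that the equation $\widehat{A}v - \mu v = \widetilde f$ has a unique solution $v\in H^{\vartheta}_p(\mathbb{R}^n)$ for every $\vartheta\in[0,\alpha)$, with the a priori bound $\|v\|_{H^{\vartheta}_p}\le N\|\widetilde f\|_{L^p}$. It remains to check that the weak solution $u$ must coincide with this $v$, i.e. that weak solutions in $L^p$ are unique. This follows by testing the difference $w=u-v\in L^p$ against $\varphi = (\widehat{A}^{\ast} - \mu)\psi$ for $\psi$ ranging over $C_0^\infty$ — or more cleanly, by a duality argument: for arbitrary $h\in C_0^\infty$, solve the adjoint problem $\widehat{A}^{\ast}\psi - \mu\psi = h$ (solvable by the same Lemma~\ref{3535} applied to the formal adjoint, whose structure is again of the same nonlocal type since $\mathcal{S}^{\ast}=-\mathcal{S}$ and $b$ satisfies~$({\bf C})$), and then $\int w\,h = \int w(\widehat{A}^{\ast}-\mu)\psi = 0$ by the weak equation for $w$; since $h$ is arbitrary, $w\equiv 0$. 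Therefore $u = v\in H^{\vartheta}_p$, which is the claim.

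The main obstacle is the duality/uniqueness step: one must be careful that the formal adjoint of $\widehat{A}$ is genuinely an operator to which Lemma~\ref{3535} (or a minor variant of it) applies, and that the test function produced has enough decay and regularity to be legitimately inserted into the weak formulation satisfied by $u\in L^p$. Concretely, $\psi\in H^{\vartheta}_p$ with $\vartheta<\alpha$ need not have compact support, so one should either prove a density statement — that $C_0^\infty$ test functions can be replaced by $H^{\vartheta}_{p'}$ functions in the definition of weak solution, using the decay of $u$ coming from $p_{ss}\in\mathcal{L}^{p}$ (Lemma~\ref{3434}) and the $L^{p}$–$L^{p'}$ pairing — or run a cutoff-and-commutator argument, estimating the commutator of $\widehat{A}$ (really of its nonlocal part and of $\mathcal{S}$) with a smooth cutoff, showing the error terms vanish as the cutoff expands. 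Once this technical point is settled, everything else is a direct assembly of Lemma~\ref{singularoperator}, Lemma~\ref{3535}, and the boundedness hypotheses~$({\bf A})$–$({\bf C})$; the assumption~$({\bf D})$ enters only through Lemma~\ref{3434} to guarantee the decay needed to justify the pairings.
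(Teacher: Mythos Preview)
Your approach is essentially the same as the paper's: rewrite $A^{\ast}$ via~(\ref{adjointoperator}), use $\mathcal{S}^{\ast}=-\mathcal{S}$ from Lemma~\ref{singularoperator} to recast the weak equation as $(\widehat{A}-\mu)u=\widetilde f$ with $\widetilde f\in L^p$, produce a regular solution $\hat u\in H^{\vartheta}_p$ via Lemma~\ref{3535}, and then argue uniqueness of weak solutions to conclude $u=\hat u$.

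The one substantive difference is in the uniqueness step, precisely where you flag an obstacle. The paper does not solve the adjoint PDE or run a cutoff-commutator argument; instead it observes that $\widehat{A}=A-2b\cdot\nabla$ is itself a generator satisfying $({\bf A})$--$({\bf C})$, so Lemma~\ref{Lpsemigroup} applies to $\widehat{A}$ and gives that $\mu I-\widehat{A}^{\ast}$ is bijective on $L^p$ for $\mu>0$. Density of $C_0^\infty$ in the domain then yields that $\{(\mu I-\widehat{A}^{\ast})\varphi:\varphi\in C_0^\infty\}$ is dense in $L^p$, and uniqueness of weak solutions follows immediately by pairing. This resolvent route sidesteps the support and regularity issues you raise for the dual test function $\psi$.

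One minor correction: your final remark that $({\bf D})$ enters through Lemma~\ref{3434} to supply decay for the pairings is not needed here. The theorem concerns an arbitrary weak solution $u\in L^p$, not $p_{ss}$ specifically, and the paper's proof uses only $({\bf A})$--$({\bf C})$ (via Lemmas~\ref{singularoperator}, \ref{Lpsemigroup}, and~\ref{3535}); the $L^p$--$L^{p'}$ duality alone is enough.
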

\begin{proof}
Since $u$ is a weak solution to the nonlocal elliptic equation $(A^{\ast}-\lambda)u(x)=f(x)$, it satisfies
\begin{equation}\label{weaksol}
 \int_{\mathbb{R}^n}u(x) A\varphi(x)-\lambda u(x) \varphi(x) dx = \int_{\mathbb{R}^n}f(x)\varphi(x) dx, \quad \forall \varphi \in C^{\infty}_0(\mathbb{R}^n).
\end{equation}
From (\ref{adjointoperator}), we have
\begin{equation*}
  (A-2b(x)\nabla)^{\ast}\varphi(x)=A\varphi(x)+\text{div}(b(x))\varphi(x) + \mathcal{S}\varphi(x)
\end{equation*}
From Lemma \ref{singularoperator}, $\mathcal{S}^{\ast}=-\mathcal{S}$ in $L^p(\mathbb{R}^n)$ with $1<p<\infty$. So (\ref{weaksol}) is equivalent to
\begin{equation*}
    \int_{\mathbb{R}^n}u(x) (A-2b(x)\nabla)^{\ast}\varphi(x)-\mu u(x) \varphi(x) dx = \int_{\mathbb{R}^n}g(x)\varphi(x) dx, \quad \forall \varphi \in C^{\infty}_0(\mathbb{R}^n),
\end{equation*}
where $\mu \geq \lambda_1$, $g(x)=\text{div}(b(x))u(x)-\mathcal{S}u(x)-(\lambda+\mu) u(x)+f(x)$.

The nonlocal elliptic equation $(A^{\ast}-\lambda)u=f$ can be rewritten as
\begin{equation*}
    Au(x)-2b(x)\cdot\nabla u(x)-\mu u(Rx)= g(x), \quad x \in\mathbb{R}^n,
\end{equation*}
where $\mu \geq \lambda_1$.
From assumption (${\bf C}$) and Lemma \ref{singularoperator}, $F:u\rightarrow \text{div}(b(x))u(x)-\mathcal{S}u(x)$ is a linear bounded mapping from $L^p(\mathbb{R}^n)$ to $L^p(\mathbb{R}^n)$ for all $1<p<\infty$. Thus $g(x)\in L^p(\mathbb{R}^n)$ for all $1<p<\infty$.
Then by Lemma \ref{3535}, there exists a unique solution $\hat{u}\in H^{\vartheta}_p$ for each $\vartheta \in [0,\alpha)$ to the following equation
\begin{equation}\label{equlll}
   (A -2b(x)\cdot\nabla )\hat{u}(x)-\mu \hat{u}(x)=g(x), \quad x\in \mathbb{R}^n,
\end{equation}
where $g(x)=F(u)(x)-(\lambda+\mu) u(x)+f(x) \in L^p(\mathbb{R}^n)$.
Moreover, $\hat{u}$ satisfies following identity
\begin{equation*}
    \int_{\mathbb{R}^n}\hat{u}(x) (A-2b(x)\nabla)^{\ast}\varphi(x)-\mu \hat{u}(x) \varphi(x) dx = \int_{\mathbb{R}^n}g(x)\varphi(x) dx, \quad \forall \varphi \in C^{\infty}_0(\mathbb{R}^n).
\end{equation*}
Now we show that $u=\hat{u}$. It is sufficient to show that the weak solution of equation (\ref{equlll}) is unique. Since the operator $\hat{A}:=A-2b(x)\nabla$ is also a generater of Markov semigroup $\hat{P}_t$ which satisfies $\bf{(A)}$-$\bf{(C)}$. Then by Lemma \ref{Lpsemigroup}, for $\forall \mu >\lambda_1$, the resolvent operator $\mu I-\hat{A}^{\ast}$ is a bijective operator on $L^p(\mathbb{R}^n)$ with $1\leq p \leq \infty$.
So the set
\begin{equation*}
  \{\mu I\varphi -(A-2b(x)\nabla)^{\ast}\varphi : \forall \varphi\in C^{\infty}_0(\mathbb{R}^n)\}
\end{equation*}
is dense in $L^p(\mathbb{R}^n)$ for all $1\leq p \leq \infty$ and $\mu >\lambda_1 $. It implies that the weak solution of (\ref{equlll}) is unique, and $\hat{u} =u$. Thus $u \in H^{\vartheta}_p(\mathbb{R}^n)$ for all $\vartheta\in [0,\alpha)$.
\end{proof}

Now we have the following regularity result for the density of the invariant measure.

\begin{theorem}\label{regularity}
Assume that conditions $({\bf A})$-$({\bf D})$ hold. Then the unique invariant measure has a density $p_{ss} \in H^{\vartheta}_{p}(\mathbb{R}^n)$ for all $\vartheta \in [0,\alpha)$ and $1 < p < \infty$.
\end{theorem}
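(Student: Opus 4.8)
The plan is to recognize the stationary density $p_{ss}$ as a weak solution of the nonlocal elliptic equation (\ref{nonlocalAE}) in the trivial case $\lambda = 0$, $f \equiv 0$, and then read off the conclusion directly from the a priori estimate of Theorem \ref{Regularitynonlocal}. So the theorem is essentially a corollary: the substantive work is already packaged in Lemmas \ref{singularoperator}, \ref{3434}, \ref{3535} and Theorem \ref{Regularitynonlocal}.

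First I would recall from Lemma \ref{3434}(ii) that $p_{ss}$ exists and belongs to $L^p(\mathbb{R}^n)$ for every $1 \le p \le \infty$; in particular $p_{ss} \in L^p(\mathbb{R}^n)$ for each fixed $p \in (1,\infty)$. Next, by the definition of an invariant measure, $\int_{\mathbb{R}^n} A\varphi(x)\,\mu(dx) = 0$ for all $\varphi \in C^\infty_0(\mathbb{R}^n)$, and since $\mu$ has density $p_{ss}$ this is exactly
\begin{equation*}
  \int_{\mathbb{R}^n} p_{ss}(x)\,A\varphi(x)\,dx = 0, \qquad \forall\,\varphi \in C^\infty_0(\mathbb{R}^n).
\end{equation*}
Comparing with the definition of weak solution stated just before Theorem \ref{Regularitynonlocal}, this says precisely that $u := p_{ss}$ is a weak solution of $A^{\ast}u - \lambda u = f$ with $\lambda = 0$ and $f \equiv 0 \in L^p(\mathbb{R}^n)$.

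Then, for each fixed $p \in (1,\infty)$, I would apply Theorem \ref{Regularitynonlocal} with this $u$, $\lambda = 0$ and $f = 0$ (conditions $(\mathbf{A})$–$(\mathbf{D})$ being exactly those assumed here), which yields $p_{ss} \in H^{\vartheta}_{p}(\mathbb{R}^n)$ for all $\vartheta \in [0,\alpha)$. Since $p \in (1,\infty)$ was arbitrary, the stated regularity follows. I expect no real obstacle at this stage: the singular integral analysis ($L^p$-boundedness and skew-adjointness of $\mathcal{S}$, Lemma \ref{singularoperator}), the solvability and $H^{\vartheta}_p$-a priori bound for the symmetrized equation (\ref{nonlocaleq}) via Lemma \ref{Backe} (Lemma \ref{3535}), and the uniqueness bootstrap identifying weak and strong solutions (Theorem \ref{Regularitynonlocal}) have all been established. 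The only point worth a line of care is that $p_{ss}$ qualifies as the $L^p$ datum in Theorem \ref{Regularitynonlocal} \emph{simultaneously} for every $p \in (1,\infty)$ — which is exactly what Lemma \ref{3434}(ii) provides — so one simply lets $p$ sweep the whole range $(1,\infty)$ after applying the theorem once for each $p$.
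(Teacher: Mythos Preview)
Your proposal is correct and follows essentially the same argument as the paper: recognize $p_{ss}$ as a weak solution of $A^{\ast}u=0$ via the definition of invariant measure, invoke Lemma \ref{3434}(ii) for the $L^p$ membership, and then apply Theorem \ref{Regularitynonlocal} with $\lambda=0$, $f=0$ for each $p\in(1,\infty)$.
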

\begin{proof}
  By the definition of the invariant measure, $p_{ss}$ is the weak solution of nonlocal elliptic equation $A^{\ast}p_{ss}=0$, i.e.
 \begin{equation*}
   \int_{\mathbb{R}^n} p_{ss}(x) A \varphi (x) dx, \quad  \forall \varphi\in C^{\infty}_0(\mathbb{R}^n).
 \end{equation*}
From Lemma \ref{3434}, $p_{ss} \in L^p(\mathbb{R}^n)$ for all $1\leq  p \leq \infty$, Then the regularity result in Theorem \ref{Regularitynonlocal} implies that $p_{ss} \in H^{\vartheta}_p(\mathbb{R}^n)$ for all $1< p <\infty$ and $\vartheta \in [0,\alpha)$.
\end{proof}

\section{Linear Response Theory}
In this section, we derive the response function. Furthermore, we establish the linear response theory and the Agarwal-type fluctuation-dissipation theorem for SDE (\ref{Lequ}).

\subsection{The response function}
In this subsection, we derive the response function.
We assume that $(X_{t})_{t\geq 0}$ is a stationary Markov process, which satisfies the SDE (\ref{Lequ}), and its initial distribution is the unique invariant measure $\mu$ of the corresponding Markov semigroup $P_t$. Let $(X^{F}_t)_{t\geq 0}$ be the unique strong solution of the perturbed SDE (\ref{LequU}), which is the perturbed process under perturbation $F$. We denote $(P_t^{ F})_{t\geq 0}$ the corresponding Markov semigroup of the perturbed process $(X_t^{F})_{t\geq 0}$. The generator of $(X^{ F})_{t\geq 0}$ is denoted by
\begin{equation*}
    A^{F}u(x)=Au(x)+F(t) K(x)\cdot \nabla u(x):=(A+F(t) L )u(x),
\end{equation*}
where the external perturbation operator $Lu=K(x)\cdot \nabla u$. Then the associated Fokker-Planck equation of the perturbed process $(X^{F}_t)_{t\geq 0}$ is
\begin{equation}\label{UFKequ}
  \left\{
   \begin{aligned}
   & \partial_t p_t^{ F}(x) = A^{\ast} p_t^{ F}(x)+ F(t) L^{\ast} p_t^{ F}(x),\\
   & p_0^{F}(x)=p_{ss}(x),
   \end{aligned}
   \right.
\end{equation}
where $ L^{\ast} u(x)=- \text{div}(K(x)u(x))$, and $p^F_t$ is the probability density of $X^F_t$.

Now we redefine the response function of an observable in (\ref{resf}) mathematically. This definition means that when the system closed to the steady state, the change in the expectation value of every observable is linear with the small perturbing source.

\begin{definition}
Let $O(x)\in L^p(\mathbb{R}^n)$ be an observable for some $1< p \leq \infty$. Let $(X_{t})_{t\geq 0}$ be a stationary Markov process. For every $\phi\in L^{\infty}(\mathbb{R}^{+})$, let $(X^{\epsilon\phi}_t)_{t\geq 0}$ be the perturbed process under perturbation $\epsilon\phi(t)K(x)$ with initial value $X_0 $. Then a locally integrable function $\mathcal{R}_{O}$ is called the response function of the observable $O$ if it satisfies
\begin{equation*}
\lim_{\epsilon \rightarrow 0}\frac{1}{\epsilon} (\mathbb{E}O(X_t^{\epsilon \phi})-\mathbb{E}O(X_{0}))= \int_0^t \mathcal{R}_{O}(t-s)\phi(s) ds.
\end{equation*}
\end{definition}

The following lemma shows the perturbation property of Markov semigroup $P_t$.

\begin{lemma}\label{43}
Assume that $({\bf A})$-$({\bf D})$ hold. Then for each $f\in L^{p}(\mathbb{R}^n)$ with $1<p\leq \infty$, and $0\leq s\leq t $, we have
\begin{equation*}
    P^{\epsilon \phi}_{s,t} f(x)-P_{s,t} f(x)=\epsilon\int_s^t \phi(r) P_{r-s}(K(x)\cdot \nabla(P^{\epsilon\phi}_{r,t}f( x)))dr.
\end{equation*}
Moreover, for $t>0$,
\begin{equation*}
    \lim_{\epsilon\rightarrow 0}\frac{1}{\epsilon}(P^{\epsilon \phi}_{0,t} f(x)-P_{t} f(x)) = \int_0^t \phi(s) P_{s}(K(x)\cdot \nabla(P_{t-s}f(x)))ds, \quad \text{in} \ L^p(\mathbb{R}^n).
\end{equation*}
\end{lemma}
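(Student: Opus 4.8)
The plan is to establish first the exact (Duhamel) identity by a variation-of-constants computation at the level of the backward Kolmogorov equation, and then derive the first-order limit by passing $\epsilon\to 0$ inside the integral, using the gradient and fractional-derivative estimates of Theorem~\ref{tpdE} packaged in Lemma~\ref{EXuBack}.

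\textit{Step 1: the Duhamel identity.} Fix $t>0$ and first take $f\in C^{\infty}_0(\mathbb{R}^n)$; write $Lu=K\cdot\nabla u$. By It\^{o}'s formula the function $u(r,x):=P^{\epsilon\phi}_{r,t}f(x)$ solves the backward equation $\partial_r u=-(A+\epsilon\phi(r)L)u$ with $u(t,\cdot)=f$. For fixed $s\le t$ I would introduce the interpolating family $\Phi(r):=P_{r-s}\big(P^{\epsilon\phi}_{r,t}f\big)$ on $[s,t]$, so that $\Phi(s)=P^{\epsilon\phi}_{s,t}f$ and $\Phi(t)=P_{t-s}f=P_{s,t}f$. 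Differentiating in $r$, using $\tfrac{d}{dr}P_{r-s}g=P_{r-s}Ag$ on one side and $\partial_r(P^{\epsilon\phi}_{r,t}f)=-(A+\epsilon\phi(r)L)(P^{\epsilon\phi}_{r,t}f)$ on the other, the two copies of $A$ cancel and leave
$$\Phi'(r)=-\,\epsilon\,\phi(r)\,P_{r-s}\big(K\cdot\nabla(P^{\epsilon\phi}_{r,t}f)\big);$$
integrating over $[s,t]$ yields the first identity. The integrand is well defined and $r$-integrable: by Lemma~\ref{EXuBack} applied to the perturbed equation with $\theta=1\in(0,\alpha)$ one has $\|\nabla P^{\epsilon\phi}_{r,t}f\|_{L^p}\le C(t-r)^{-1/\alpha}\|f\|_{L^p}$ with $C$ uniform for $\epsilon\in(0,1]$, while $K\in L^{\infty}$ and $P_{r-s}$ are bounded on $L^p$ and $1/\alpha<1$. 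Finally I extend from $C^{\infty}_0$ to arbitrary $f\in L^p$ by density, using exactly these uniform estimates.

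\textit{Step 2: the first-order limit.} Setting $s=0$ in Step~1 gives
$$\frac{1}{\epsilon}\big(P^{\epsilon\phi}_{0,t}f-P_tf\big)=\int_0^t\phi(r)\,P_r\big(K\cdot\nabla(P^{\epsilon\phi}_{r,t}f)\big)\,dr,$$
so it remains to pass to the limit inside. The dominating function comes directly from Step~1: $\|P_r(K\cdot\nabla P^{\epsilon\phi}_{r,t}f)\|_{L^p}\le C\|K\|_{L^{\infty}}(t-r)^{-1/\alpha}\|f\|_{L^p}$, which is integrable on $[0,t]$. For the pointwise-in-$r$ convergence I must check that $\nabla P^{\epsilon\phi}_{r,t}f\to\nabla P_{t-r}f$ in $L^p$; applying $\nabla_x$ to the Step~1 identity with $s=r$ and using $\|\nabla_x P_{v-r}g\|_{L^p}\le C(v-r)^{-1/\alpha}\|g\|_{L^p}$ (Lemma~\ref{EXuBack}, $\theta=1$) together with the bound on $\|\nabla P^{\epsilon\phi}_{v,t}f\|_{L^p}$ gives
$$\big\|\nabla P^{\epsilon\phi}_{r,t}f-\nabla P_{t-r}f\big\|_{L^p}\ \le\ \epsilon\,C\int_r^t(v-r)^{-1/\alpha}(t-v)^{-1/\alpha}\,dv\,\|f\|_{L^p}\ \le\ \epsilon\,C'\,(t-r)^{1-2/\alpha}\|f\|_{L^p},$$
where the middle integral is a Beta integral, finite because $1/\alpha<1$, with value proportional to $(t-r)^{1-2/\alpha}$. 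Since $\alpha\in(1,2)$ we have $1-2/\alpha>-1$, so $(t-r)^{1-2/\alpha}$ is integrable on $[0,t]$; by dominated convergence the integrand converges in $L^p$ and the whole difference is in fact $O(\epsilon)$ in $L^p$, which is the claimed limit.

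\textit{Main obstacle.} The delicate part is Step~2: one cannot differentiate the mild identity carelessly, and the gradient of the unperturbed semigroup carries the singular factor $(v-r)^{-1/\alpha}$. The resolution is that composing two such fractional kernels produces the Beta-type integral $\int_r^t(v-r)^{-1/\alpha}(t-v)^{-1/\alpha}\,dv\sim(t-r)^{1-2/\alpha}$, whose residual singularity at $r=t$ remains integrable in $r$ precisely when $1<\alpha<2$ --- this is where the hypothesis on $\alpha$ is essential, and where I would be most careful about justifying the interchange of $\nabla_x$ with the $r$-integral and checking that all constants stay uniform in $\epsilon$.
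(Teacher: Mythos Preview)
Your proof is correct and follows the same overall architecture as the paper: derive the Duhamel identity by comparing the backward equations for the perturbed and unperturbed semigroups, then control the gradient of the difference $P^{\epsilon\phi}_{r,t}f-P_{t-r}f$ to pass to the limit under the integral. Your interpolating-family derivation of Step~1 is the standard variation-of-constants argument and matches the paper's use of the mild-solution formula from Lemma~\ref{Backe}.

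The one genuine technical difference is in Step~2. The paper controls $\nabla w$ in a mixed space--time norm: it invokes Lemma~\ref{Backe} with suitable exponents to obtain $\|P^{\epsilon\phi}_{\cdot,t}f-P_{\cdot,t}f\|_{L^{\gamma}(0,t;H^{1}_{p})}\lesssim\epsilon$, and then passes to $L^{1}(0,t;L^{p})$ convergence of $g^{\epsilon}$. You instead bound $\|\nabla w(r,\cdot)\|_{L^{p}}$ pointwise in $r$ via the Beta-type convolution $\int_{r}^{t}(v-r)^{-1/\alpha}(t-v)^{-1/\alpha}\,dv=B(1-\tfrac{1}{\alpha},1-\tfrac{1}{\alpha})(t-r)^{1-2/\alpha}$, which is finite exactly because $\alpha>1$. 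Your route is more elementary---it bypasses Lemma~\ref{Backe} entirely and makes the role of the constraint $\alpha\in(1,2)$ explicit (the residual singularity $(t-r)^{1-2/\alpha}$ is integrable precisely when $\alpha<2$, though strictly speaking only $\alpha>1$ is needed for the Beta integral itself). The paper's route, on the other hand, packages the estimate once and for all in Lemma~\ref{Backe}, which is reused elsewhere. Both arguments are sound; your concern about interchanging $\nabla_{x}$ with the $r$-integral is resolved by the fact that the $L^{p}$ norm of the gradient of the integrand is dominated by the integrable function $(v-r)^{-1/\alpha}(t-v)^{-1/\alpha}$, so the exchange holds at the level of Bochner integrals in $H^{1}_{p}$.
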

\begin{proof}
Denote $P^{\epsilon \phi}_{s,t} f(x)=w_1(s,x)$, $P_{s,t}f(x)=w_2(s,x)$, and $w(s,x)=w_1(s,x)-w_2(s,x)$. Then $w_1$ is the solution of the Kolmogorov backward equation
\begin{equation}
  \left\{
   \begin{aligned}
   & \partial_s w_1(s,x) = -A^{\epsilon\phi}(s)w_1(s,x),\quad (s,x)\in [0,t]\times\mathbb{R}^n,\\
   & w_1(t,x)=f(x),\quad x\in \mathbb{R}^n,
   \end{aligned}
   \right.
\end{equation}
and $w_2$ is the solution of the Kolmogorov backward equation
\begin{equation}
  \left\{
   \begin{aligned}
   & \partial_s w_2(s,x) = -Aw_2(s,x),\quad (s,x)\in [0,t]\times\mathbb{R}^n,\\
   & w_2(t,x)=f(x),\quad x\in \mathbb{R}^n.
   \end{aligned}
   \right.
\end{equation}
Lemma \ref{EXuBack} implies that above two equations exist unique $H^{\vartheta}_{p}(\mathbb{R}^n)$ solution for all $\vartheta \in (1,\alpha)$, such that
\begin{equation*}
  \|w_i(s,t)\|_{H^{\vartheta}_{p}} \lesssim (t-s)^{-\frac{\vartheta}{\alpha}}\|f\|_{L^p},\quad i=1,2.
\end{equation*}
Then for some $\beta \in (1,\alpha)$, by Sobolev embedding $H^{\beta}_p \hookrightarrow H^{1}_{p}$,
\begin{equation*}
  \|w_i(s,t)\|_{H^{1}_{p}} \lesssim (t-s)^{-\frac{\beta}{\alpha}}\|f\|_{L^p},\quad i=1,2.
\end{equation*}
Note that $w(s,x)$ satisfies following equation
\begin{equation}\label{wee}
  \left\{
   \begin{aligned}
   & \partial_s w(s,x) = -Aw(s,x)- \epsilon g^{\epsilon}(s,x), \quad (s,x)\in [0,t]\times\mathbb{R}^n,\\
   & w(t,x)=0, \quad x\in \mathbb{R}^n,
   \end{aligned}
   \right.
\end{equation}
where $g^{\epsilon}(s,x):=\phi(s)Lw_1(s,x)=\phi(s)L(P^{\epsilon\phi}_{s,t}f(x))$. From Lemma \ref{EXuBack}, for $0\leq s \leq t$, we have
\begin{equation*}
    \|g^{\epsilon}(s,x)\|_{L^{p}(\mathbb{R}^n)}\leq \|\phi\|_{L^{\infty}[0,t]}\|\nabla w_1(s,x)\|_{L^{p}(\mathbb{R}^n)}\lesssim (t-s)^{-\frac{\beta}{\alpha}}\|\phi\|_{L^{\infty}[0,t]}\|f\|_{L^{p}(\mathbb{R}^n)}.
\end{equation*}
For some $\gamma \in [\frac{\alpha}{\beta},\alpha)$,
\begin{equation*}
    \|g^{\epsilon}\|_{L^{\gamma}(s,t;L^{p})}\lesssim (t-s)^{\frac{1}{\gamma}-\frac{\beta}{\alpha}}\|\phi\|_{L^{\infty}[0,t]}\|f\|_{L^p(\mathbb{R}^n)}\leq \|\phi\|_{L^{\infty}[0,t]}\|f\|_{L^p(\mathbb{R}^n)}.
\end{equation*}

Then by Lemma \ref{Backe}, there is a unique solution $w$ to (\ref{wee}), and it satisfies
\begin{equation*}
   w(s,x)=P^{\epsilon \phi}_{s,t} f(x)-P_{s,t} f(x)=\epsilon\int^t_s P_{s,r}g^{\epsilon}(r,x)dr=\epsilon\int^t_s P_{s,r}\phi(r)L(P^{\epsilon\phi}_{r,t}f(x))dr,
\end{equation*}
and
\begin{align*}
  \|P^{\epsilon \phi}_{r,t} f-P_{r,t} f\|_{L^{\gamma}(s,t;H^1_{p})} & = \|w\|_{L^{\gamma}(s,t;H^1_{p})} \\
   & \lesssim \epsilon \|g^{\epsilon}\|_{L^{\gamma}(s,t;L^{p})}\\
   & \lesssim \epsilon c\|\phi\|_{L^{\infty}[0,t]}\|f\|_{L^p}.
\end{align*}
So
\begin{equation}
    \lim_{\epsilon\rightarrow 0}\|P^{\epsilon \phi}_{r,t} f-P_{r,t} f\|_{L^{\gamma}(0,t;H^1_{p})} = 0.
\end{equation}
This implies that
\begin{equation*}
  \lim_{\epsilon\rightarrow 0}\|g^{\epsilon}(s,x)-\phi(s) L(P_{t-s}f(x))\|_{L^1(0,t;L^{p})}=0
\end{equation*}
Thus for $t>0$, we have
\begin{align*}
\lim_{\epsilon\rightarrow 0}\frac{1}{\epsilon}(P^{\epsilon \phi}_{0,t} f(x)-P_{t} f(x)) & = \lim_{\epsilon\rightarrow 0}\int_0^t g^{\epsilon}(s,x)ds \\
    & =\int_0^t \lim_{\epsilon\rightarrow 0}g^{\epsilon}(s,x)ds\\
    & =\int_0^t \phi(s) P_{s}L(P_{t-s}f(x))ds \quad \text{in} \ L^p(\mathbb{R}^n).
\end{align*}
\end{proof}

Now we state the main results of this subsection.

\begin{theorem}
Assume that $({\bf A})$-$({\bf D})$ hold. Suppose $F(t)\in L^{\infty}(\mathbb{R}^{+})$, and $K(x), \text{div}(K(x)) \in  L^{\infty}(\mathbb{R}^n)$. Let $O(x)\in L^p(\mathbb{R}^n)$ be an observable with some $1< p \leq \infty$. Then the response function $\mathcal{R}_{O}$ is given by
\begin{equation*}
    \mathcal{R}_{O}(t)=\int_{\mathbb{R}^n} L(P_{t}O(x))p_{ss}(x)dx=\int_{\mathbb{R}^n} O(x)  P^{\ast}_{t}(L^{\ast}p_{ss}(x))dx.
\end{equation*}
\end{theorem}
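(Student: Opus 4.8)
The plan is to combine the perturbation identity of Lemma~\ref{43} with the invariance $P_t^{\ast}p_{ss}=p_{ss}$, and then to transfer the operators $L$ and $P_t$ onto $p_{ss}$ by duality. First I would use stationarity to reduce the defining limit to an $L^p$--$L^{p'}$ pairing. Since the perturbed process $X_t^{\epsilon\phi}$ starts from $X_0\sim\mu$, conditioning on $X_0$ gives $\mathbb{E}O(X_t^{\epsilon\phi})=\int_{\mathbb{R}^n}(P_{0,t}^{\epsilon\phi}O)(x)\,p_{ss}(x)\,dx$, whereas $\mathbb{E}O(X_0)=\int_{\mathbb{R}^n}O(x)p_{ss}(x)\,dx=\int_{\mathbb{R}^n}(P_tO)(x)p_{ss}(x)\,dx$ because $\int(P_tO)p_{ss}=\int O\,(P_t^{\ast}p_{ss})=\int O\,p_{ss}$. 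Hence
\begin{equation*}
\frac{1}{\epsilon}\bigl(\mathbb{E}O(X_t^{\epsilon\phi})-\mathbb{E}O(X_0)\bigr)=\int_{\mathbb{R}^n}\frac{1}{\epsilon}\bigl((P_{0,t}^{\epsilon\phi}O)(x)-(P_tO)(x)\bigr)p_{ss}(x)\,dx .
\end{equation*}
By Lemma~\ref{3434}, $p_{ss}\in L^{p'}(\mathbb{R}^n)$ with $\tfrac1p+\tfrac1{p'}=1$, so this pairing is continuous in the $L^p$ topology of its first argument.

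Next I would let $\epsilon\to 0$. Lemma~\ref{43} gives $\tfrac1\epsilon(P_{0,t}^{\epsilon\phi}O-P_tO)\to\int_0^t\phi(s)\,P_s\bigl(L(P_{t-s}O)\bigr)\,ds$ in $L^p(\mathbb{R}^n)$, so continuity of the pairing yields
\begin{equation*}
\lim_{\epsilon\to0}\frac{1}{\epsilon}\bigl(\mathbb{E}O(X_t^{\epsilon\phi})-\mathbb{E}O(X_0)\bigr)=\int_{\mathbb{R}^n}\Bigl(\int_0^t\phi(s)\,P_s\bigl(L(P_{t-s}O)\bigr)(x)\,ds\Bigr)p_{ss}(x)\,dx .
\end{equation*}
Here I would invoke Fubini's theorem, justified by the gradient bound $\|\nabla(P_rO)\|_{L^p}\lesssim r^{-\theta/\alpha}\|O\|_{L^p}$ for $\theta\in(1,\alpha)$ (from Lemma~\ref{EXuBack} together with $H^{\theta}_p\hookrightarrow H^1_p$; note $\theta/\alpha<1$, hence locally integrable), combined with $\phi\in L^\infty$, $K\in L^\infty$, $p_{ss}\in L^{p'}$ and the $L^p$-boundedness of $P_s$. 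Interchanging the integrals and then using invariance once more in the form $\int_{\mathbb{R}^n}(P_sh)p_{ss}=\int_{\mathbb{R}^n}h\,(P_s^{\ast}p_{ss})=\int_{\mathbb{R}^n}h\,p_{ss}$ (applied to $h=L(P_{t-s}O)\in L^p$) collapses the factor $P_s$:
\begin{equation*}
\lim_{\epsilon\to0}\frac{1}{\epsilon}\bigl(\mathbb{E}O(X_t^{\epsilon\phi})-\mathbb{E}O(X_0)\bigr)=\int_0^t\phi(s)\Bigl(\int_{\mathbb{R}^n}L(P_{t-s}O)(x)\,p_{ss}(x)\,dx\Bigr)ds .
\end{equation*}

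Comparing with the definition of the response function, this identity holds for every $\phi\in L^\infty(\mathbb{R}^+)$, which forces the kernels to agree almost everywhere; hence $\mathcal{R}_O(t)=\int_{\mathbb{R}^n}L(P_tO)(x)\,p_{ss}(x)\,dx$, and this function is locally integrable in $t$ by the same gradient bound. Finally, the second expression follows by moving $L$ and then $P_t$ onto $p_{ss}$: $\int L(P_tO)p_{ss}=\int(P_tO)\,L^{\ast}p_{ss}=\int O\,P_t^{\ast}(L^{\ast}p_{ss})$, where $L^{\ast}p_{ss}=-\text{div}(Kp_{ss})=-(\text{div}\,K)p_{ss}-K\cdot\nabla p_{ss}\in L^{p'}$ is well defined since $K,\text{div}\,K\in L^\infty$ and, by Theorem~\ref{regularity}, $p_{ss}\in H^{\vartheta}_{p'}\hookrightarrow H^1_{p'}$ for $\vartheta\in(1,\alpha)$; the semigroup $P_t^{\ast}$ then acts on $L^{\ast}p_{ss}$ by Lemma~\ref{Lpsemigroup}.

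The main obstacle is the passage to the limit $\epsilon\to0$ under the spatial integral followed by the Fubini step: both rely on the uniform-in-$\epsilon$ $L^p$ bounds and the $L^p$-convergence supplied by Lemma~\ref{43}, together with the integrable singularity $r^{-\theta/\alpha}$ of the gradient estimate. Once these are secured, the remainder is a routine bookkeeping of adjoints, and the regularity input $p_{ss}\in H^{\vartheta}_{p'}$ from Theorem~\ref{regularity} is precisely what makes the conjugate-variable form $\int O\,P_t^{\ast}(L^{\ast}p_{ss})\,dx$ meaningful.
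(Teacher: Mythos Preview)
Your proposal is correct and follows essentially the same route as the paper: express $\mathbb{E}O(X_t^{\epsilon\phi})-\mathbb{E}O(X_0)$ as the pairing of $\tfrac1\epsilon(P_{0,t}^{\epsilon\phi}O-P_tO)$ with $p_{ss}$, invoke Lemma~\ref{43} for the $L^p$-limit, use Fubini and the invariance $P_s^\ast p_{ss}=p_{ss}$ to collapse the inner $P_s$, and finish by duality. Your write-up is in fact more careful than the paper's about justifying the interchange of limits and integrals and about why $L^\ast p_{ss}\in L^{p'}$, but the architecture is identical.
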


\begin{proof}
For every $t>0$ and $ \phi \in C^{\infty}[0,t]$, we have
\begin{equation*}
  \mathbb{E}O(X^{\epsilon\phi}_t)= \int_{\mathbb{R}^n}\mathbb{E}_x O(X^{\epsilon\phi}_t)p_{ss}(x)dx=\mathbb{E}P^{\epsilon \phi}_{0,t} O(X_{0}),
\end{equation*}
and
\begin{equation*}
  \mathbb{E}O(X_t)= \int_{\mathbb{R}^n}\mathbb{E}_x O(X_t)p_{ss}(x)dx=\mathbb{E}P_{0,t} O(X_{0}).
\end{equation*}
Then it follows from Lemma \ref{43} that
\begin{align*}
\lim_{\epsilon \rightarrow 0}\frac{1}{\epsilon} (\mathbb{E}O(X_t^{\epsilon \phi})-\mathbb{E}O(X_{0})) = &  \lim_{\epsilon \rightarrow 0}\mathbb{E}\frac{1}{\epsilon} (P^{\epsilon \phi}_{0,t} O(X_{0})-P_{0,t} O(X_{0})) \\
 =& \int_0^t \phi(s) P_{s}L(P_{t-s}f(X_{0}))ds\\
 =& \int_0^t\int_{\mathbb{R}^n} \phi(s)P_{s}L(P_{t-s}O(x))dsp_{ss}(x)dxds \\
 =& \int_0^t \phi(s)\int_{\mathbb{R}^n} L(P_{t-s}O(x))p_{ss}(x)dxds
\end{align*}
Thus
\begin{align*}
     \mathcal{R}_{O}(t) =\int_{\mathbb{R}^n} L(P_{t}O(x))p_{ss}(x)dx =\int_{\mathbb{R}^n} O(x)  P^{\ast}_{t}(- \text{div}(K(x)p_{ss}(x))))dx.
\end{align*}
The proof is complete.
\end{proof}

\subsection{The conjugate variable}
In nonequilibrium statistical mechanics, the fluctuation-dissipation theorem reveals the response of an observable physical quantity to a small external perturbation, by the correlation function of this observable physical quantity and another observable physical quantity that is a conjugate variable to the perturbation with respect to energy.

First, we have the following Agarwal-type fluctuation dissipation theorem(see \cite{P2016}).
\begin{theorem}\label{AFDT}
(Agarwal-type fluctuation dissipation theorem) Assume that $({\bf A})$-$({\bf D})$ hold, and $F(t)\in L^{\infty}(\mathbb{R}^{+})$, $ K(x), \text{div}(K(x)) \in  L^{\infty}(\mathbb{R}^n)$. Then for every observable $O(x) \in L^p(\mathbb{R}^n)$ for some $1\leq p \leq \infty$, there exists another observable $Y$ defined as
\begin{equation*}
    Y(x) =\frac{- \text{div}(K(x)p_{ss}(x))}{p_{ss}(x)},
\end{equation*}
such that
\begin{equation*}
    \mathcal{R}_{O}(t)=\mathbb{E}( O(X_t)Y(X_{0})).
\end{equation*}
\end{theorem}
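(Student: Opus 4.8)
The plan is to evaluate the right-hand side $\mathbb{E}(O(X_t)Y(X_0))$ directly and identify it with the second expression for $\mathcal{R}_{O}(t)$ obtained in the response-function theorem of the previous subsection; the particular form of $Y$ is chosen precisely so that the bridge $Y\,p_{ss}=-\mathrm{div}(K\,p_{ss})=L^{\ast}p_{ss}$ holds. Concretely, since $(X_t)_{t\geq 0}$ is stationary, $X_0$ has law $\mu=p_{ss}\,dx$; by the Markov property, $\mathbb{E}\big(O(X_t)\mid\mathcal{F}_0\big)=(P_tO)(X_0)$ with $(P_tO)(x)=\int_{\mathbb{R}^n}p(0,x;t,y)O(y)\,dy$ the version supplied by Theorem~\ref{tpdE} (which agrees $dx$-a.e.\ with the semigroup action of Lemma~\ref{EXuBack}). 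Pulling the $\mathcal{F}_0$-measurable factor $Y(X_0)$ out of the conditional expectation and integrating against the law of $X_0$, I would obtain
\begin{equation*}
 \mathbb{E}\big(O(X_t)Y(X_0)\big)=\int_{\mathbb{R}^n}Y(x)\,(P_tO)(x)\,p_{ss}(x)\,dx=\int_{\mathbb{R}^n}(P_tO)(x)\,\big(L^{\ast}p_{ss}\big)(x)\,dx ,
\end{equation*}
the second equality being the definition of $Y$. Then I would invoke the adjoint identity $\langle P_tO,\,L^{\ast}p_{ss}\rangle_{L^2}=\langle O,\,P^{\ast}_t(L^{\ast}p_{ss})\rangle_{L^2}$ — which is just Fubini applied to $\iint p(0,x;t,y)\,O(y)\,L^{\ast}p_{ss}(x)\,dy\,dx$ — to rewrite the last integral as $\int_{\mathbb{R}^n}O(x)\,P^{\ast}_t(L^{\ast}p_{ss})(x)\,dx$, which equals $\mathcal{R}_{O}(t)$ by the response-function theorem. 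That closes the argument; what remains is to check that every object is well-defined and that the two interchanges are legitimate.

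For well-definedness of $Y$: Lemma~\ref{3434}(i) gives $p_{ss}(x)>0$ for every $x$, so the quotient is pointwise meaningful; differentiating the invariance identity $p_{ss}(y)=\int_{\mathbb{R}^n}p_{ss}(x)p(0,x;t,y)\,dx$ under the integral sign (licit by the gradient estimate of Theorem~\ref{tpdE}(ii) together with $p_{ss}\in L^1$) shows $p_{ss}$ is differentiable with $|\nabla p_{ss}|\lesssim p_{ss}$, whence $L^{\ast}p_{ss}=-(\mathrm{div}\,K)\,p_{ss}-K\cdot\nabla p_{ss}\in L^q(\mathbb{R}^n)$ for every $1\leq q\leq\infty$, using $K,\mathrm{div}\,K\in L^{\infty}$ and $p_{ss}\in L^q$ for all $q$ (Lemma~\ref{3434}(ii)). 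Consequently
\begin{equation*}
 \mathbb{E}\,|O(X_t)\,Y(X_0)|=\int_{\mathbb{R}^n}|O(y)|\,P^{\ast}_t\!\big(|L^{\ast}p_{ss}|\big)(y)\,dy\leq \|O\|_{L^p}\,\big\|P^{\ast}_t\!\big(|L^{\ast}p_{ss}|\big)\big\|_{L^{p'}}\leq C\,\|O\|_{L^p}\,\|L^{\ast}p_{ss}\|_{L^{p'}}<\infty ,
\end{equation*}
by H\"older's inequality and the $L^{p'}$-boundedness of $P^{\ast}_t$ from Lemma~\ref{Lpsemigroup} (with $p'=1$ when $p=\infty$ and $p'=\infty$ when $p=1$). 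The same majorant justifies pulling $Y(X_0)$ through the conditional expectation and the Fubini step in the adjoint identity; moreover $P_tO\in L^p$ and $P^{\ast}_t(L^{\ast}p_{ss})\in L^{p'}$, so all the pairings above are genuine $L^2$-type dual pairings.

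I do not expect a real obstacle here, since the conceptual content is already contained in the response-function theorem and in the identity $Y\,p_{ss}=L^{\ast}p_{ss}$. The only point needing attention is the integrability bookkeeping at the endpoints $p\in\{1,\infty\}$, where one must ensure $L^{\ast}p_{ss}$ lands in $L^1\cap L^{\infty}$; this is exactly what the gradient estimate $|\nabla p_{ss}|\lesssim p_{ss}$ and the full-range bound $p_{ss}\in L^q(\mathbb{R}^n)$, $1\leq q\leq\infty$, are used for. With those inputs the displayed chain of equalities is valid and yields $\mathcal{R}_{O}(t)=\mathbb{E}(O(X_t)Y(X_0))$.
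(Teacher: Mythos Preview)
Your chain of equalities is exactly the paper's: condition on $\mathcal{F}_0$, integrate against $p_{ss}$, use $Y\,p_{ss}=L^{\ast}p_{ss}$, and pass to the adjoint to recover the response-function formula. The only wrinkle is in your justification that $Y$ is well-defined. You differentiate the invariance identity $p_{ss}(y)=\int p_{ss}(x)\,p(0,x;t,y)\,dx$ in $y$ and invoke Theorem~\ref{tpdE}(ii), but that estimate controls $\nabla_x p(s,x;t,y)$, not $\nabla_y p$; as stated in the paper it does not license differentiation of the integrand in $y$, and hence does not directly yield $|\nabla p_{ss}|\lesssim p_{ss}$. The paper instead appeals to Theorem~\ref{regularity} (together with Lemma~\ref{3434}), which gives $p_{ss}\in H^{\vartheta}_p(\mathbb{R}^n)$ for all $\vartheta\in[0,\alpha)$ and $1<p<\infty$; since $\alpha>1$ this puts $\nabla p_{ss}\in L^p$ and makes $L^{\ast}p_{ss}=-\mathrm{div}(K\,p_{ss})$ a bona fide function. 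With that substitution your argument goes through unchanged.
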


\begin{proof}
From Theorem \ref{regularity} and Lemma \ref{3434}, we see that the density $p_{ss} >0$ and $p_{ss} \in H^{\vartheta}_p(\mathbb{R}^n)$ for all $\vartheta\in[0,\alpha)$ and $1< p < \infty$. So the observable $Y$ is well-defined.

Then the cross correlation function with the invariant measure satisfies
\begin{align*}
   \mathbb{E}(O(X_t)Y(X_{0})) = & \mathbb{E}((P_t O(X_{0}))Y(X_{ss}))\\
   =&\int_{\mathbb{R}^n}(P_t O(x))Y(x)p_{ss}(x)dx\\
   = & \int_{\mathbb{R}^n} O(x)P^{\ast}_t (Y(x)p_{ss}(x))dx \\
   = & \int_{\mathbb{R}^n} O(x)  P^{\ast}_{t}(L^{\ast}p_{ss}(x))dx \\
   = & \mathcal{R}_{O}(t).
\end{align*}
The proof is complete.
\end{proof}

Now we define the conjugate variable to the perturbation, and provide the linear response theory of SDE (\ref{Lequ}). Suppose there is a perturbation $\epsilon K(x)$ applied on the drift term, and the perturbed process has a unique invariant measure with density $p_{ss}^{\epsilon}$. From \cite{SS2010,PJP2009}, the conjugate variable $U(x)$ is given by
\begin{equation}\label{CVnotime}
    U(x)=-\left.\frac{ \partial\log p_{ss}^{\epsilon}}{\partial \epsilon}\right|_{\epsilon=0} = \left.\frac{\Phi^{\epsilon}(x)}{\partial \epsilon}\right|_{\epsilon=0}.
\end{equation}
In this deﬁnition, $\Phi^{\epsilon}(x)= \log p_{ss}^{\epsilon}(x)$ stands for a nonequilibrium potential, or stochastic entropy \cite{HS2001,PJP2009}. In \cite{CJ2020}, the authors show that if the conjugate variable $U(x)$ exists, then it has following form
\begin{equation*}
    U(x)=\frac{v(x)}{p_{ss}(x)},
\end{equation*}
where $v(x)$ is a solution of the following elliptic equation
\begin{equation}\label{nonEl}
    A^{\ast}v(x)=L^{\ast}p_{ss}(x).
\end{equation}

Since our perturbation depends on time $t$, it is difficult to define the conjugate variable as in (\ref{CVnotime}) directly. But motivated by above necessary condition, we can define the conjugate variable as $U(x)=v(x)/p_{ss}(x)$, where $v(x)$ is a solution of (\ref{nonEl}).
Before defining the conjugate variable, we need to prove the existence of nonlocal elliptic equation (\ref{nonEl}).
Our approach to investigate the solvability of (\ref{nonEl}) is the Fredholm alternative theorem. Before proving it, we first recall the following compactness result,  which can be found in \cite{RS1978}, Theorem XIII.67.

\begin{definition}\label{CVdef}
The space $\mathcal{H}$ is defined by $\mathcal{H}=\{u\in H^1(\mathbb{R}^n):\int_{\mathbb{R}^n}|x||u|^2dx < \infty \}$ with norm
\begin{equation*}
    \|u\|_{\mathcal{H}}= \|\nabla u\|_{L^2}+\||x|u\|_{L^2}.
\end{equation*}
\end{definition}

\begin{lemma}\label{conpact}
The Sobolev embedding $\mathcal{H}(\mathbb{R}^n)\hookrightarrow L^p(\mathbb{R}^n)$ is compact for $2 \leq p < 2^{\ast}$, where
\begin{equation*}
  2^{\ast}:=\left\{
   \begin{aligned}
   & \frac{2n}{n-2},\quad 1\leq 2 <n\\
   & \infty,\quad 2\geq n
   \end{aligned}
   \right.
\end{equation*}
is the Sobolev conjugate of $2$.
\end{lemma}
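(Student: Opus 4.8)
The plan is to establish compactness first in the borderline case $p=2$ by a standard \emph{local compactness plus tightness at infinity} argument, and then to upgrade to every $p\in(2,2^{\ast})$ by interpolating against the continuous Sobolev embedding $H^1\hookrightarrow L^{2^{\ast}}$. The role of the weight $|x|$ in the definition of $\mathcal{H}$ is precisely to kill the mass of bounded sequences near infinity, which is the only obstruction to compactness that the $H^1$-bound alone cannot control; this is the mechanism behind the compactness criterion quoted from Reed--Simon, Theorem XIII.67 (a potential tending to $+\infty$ forces the associated form domain to embed compactly into $L^2$).

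First I would check that $\mathcal{H}\hookrightarrow H^1(\mathbb{R}^n)$ continuously, since the $\mathcal{H}$-norm contains only $\|\nabla u\|_{L^2}$ and $\||x|u\|_{L^2}$. On the region $|x|\ge 1$ one has $\|u\|_{L^2(|x|\ge1)}^2\le\int_{\mathbb{R}^n}|x|^2|u|^2\,dx\le\|u\|_{\mathcal{H}}^2$, while on $B_3$ a Poincar\'e inequality relative to the annulus $B_3\setminus B_2$ gives $\|u\|_{L^2(B_3)}\le C\big(\|u\|_{L^2(B_3\setminus B_2)}+\|\nabla u\|_{L^2(B_3)}\big)\le C\|u\|_{\mathcal{H}}$; together these yield $\|u\|_{H^1}\le C\|u\|_{\mathcal{H}}$. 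Now let $(u_k)$ be bounded in $\mathcal{H}$, hence in $H^1(\mathbb{R}^n)$. For each $R\in\mathbb{N}$ the Rellich--Kondrachov theorem on the ball $B_R$ extracts a subsequence converging in $L^2(B_R)$; a diagonal extraction over $R$ produces one subsequence, still denoted $(u_k)$, converging in $L^2(B_R)$ for every $R$. Tightness comes from the weight: $\sup_k\int_{|x|>R}|u_k|^2\,dx\le R^{-2}\sup_k\int_{\mathbb{R}^n}|x|^2|u_k|^2\,dx\le CR^{-2}\to0$ as $R\to\infty$. Splitting $\|u_j-u_k\|_{L^2(\mathbb{R}^n)}\le\|u_j-u_k\|_{L^2(B_R)}+\|u_j\|_{L^2(|x|>R)}+\|u_k\|_{L^2(|x|>R)}$, choosing $R$ to make the tail terms small and then $j,k$ large, shows $(u_k)$ is Cauchy in $L^2(\mathbb{R}^n)$. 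Hence $\mathcal{H}\hookrightarrow L^2(\mathbb{R}^n)$ is compact.

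For $2<p<2^{\ast}$ I would choose $\theta\in(0,1)$ with $\tfrac1p=\tfrac{1-\theta}{2}+\tfrac{\theta}{2^{\ast}}$ and use the interpolation inequality $\|v\|_{L^p}\le\|v\|_{L^2}^{1-\theta}\|v\|_{L^{2^{\ast}}}^{\theta}$ together with $\mathcal{H}\hookrightarrow H^1\hookrightarrow L^{2^{\ast}}$. Given a bounded sequence in $\mathcal{H}$, the previous step provides a subsequence converging in $L^2$ (to some limit $u\in L^{2^{\ast}}$ by boundedness in $H^1$), and since this subsequence is bounded in $L^{2^{\ast}}$, the interpolation inequality forces convergence in $L^p$. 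When $n\le2$ one has $2^{\ast}=\infty$ and the same argument works for every finite $p\ge2$, using $H^1(\mathbb{R})\hookrightarrow L^\infty$ for $n=1$ and $H^1(\mathbb{R}^2)\hookrightarrow L^q$ for all $q<\infty$ for $n=2$.

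The main obstacle is the $p=2$ step: one must combine genuinely local compactness (Rellich on balls is all that the $H^1$-bound yields) with the global decay supplied by the weight $|x|$, and must take a little care that the $\mathcal{H}$-norm — which does not explicitly contain $\|u\|_{L^2}$ — nevertheless dominates the full $H^1$-norm, so that Rellich is applicable at all. Once that is in place, the passage to $2<p<2^{\ast}$ is routine interpolation.
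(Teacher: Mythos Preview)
Your proof is correct. The paper does not actually prove this lemma; it merely recalls the result as a citation of Reed--Simon, Theorem~XIII.67, and provides no argument. You have supplied a complete, self-contained proof via the standard \emph{local Rellich + tightness from the weight + interpolation} route, which is precisely the mechanism underlying the cited abstract criterion (a potential tending to infinity forces compact embedding of the form domain into $L^2$). Your care in verifying that the $\mathcal{H}$-norm, which omits $\|u\|_{L^2}$, nevertheless controls the full $H^1$-norm is a point the paper does not address at all. In short: you prove what the paper only quotes, and your argument is sound.
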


Applying the above compactness result, we are now ready to establish the following solvability result.

\begin{lemma}\label{LLPP}
Assume that conditions $({\bf A})$-$({\bf D})$ hold. Suppose $f \in \mathcal{L}^p$ for some $2\vee(\frac{n}{\alpha}+1)<p< \infty$, and it satisfies $\int_{\mathbb{R}^n}f(x)dx\neq 0$. Then there exists a nonzero solution $u \in H^{\vartheta}_p$ for all $\vartheta \in [0,\alpha)$ to the equation
\begin{equation}\label{nonlocalff}
  A^{\ast}u(x)=f(x), \quad x\in \mathbb{R}^n.
\end{equation}
And there exists a nonzero solution $u_{ss} \in H^{\vartheta}_p$ for all $\vartheta \in [0,\alpha)$ and all $2\vee(\frac{n}{\alpha}+1)<p< \infty$ to the equation
\begin{equation}\label{nonlocal00}
  A^{\ast}u_{ss}(x)=0, \quad x\in \mathbb{R}^n.
\end{equation}
Moreover, if $u_1$, $u_2$ are both nonzero solutions of the equation (\ref{nonlocalff}), then $u_1 -u_2 =cu_{ss}$, where $c$ is a constant.
\end{lemma}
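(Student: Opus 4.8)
The plan is to solve $A^{\ast}u=f$ by the Fredholm alternative, with the required compactness supplied by the weighted embedding of Lemma \ref{conpact}. I would start from the decomposition (\ref{adjointoperator}), $A^{\ast}=\widehat{A}-\text{div}(b)+\mathcal{S}$ with $\widehat{A}:=A-2b\cdot\nabla$, and recall (as in the proof of Theorem \ref{Regularitynonlocal}) that $\widehat{A}$ again generates a Markov semigroup satisfying $(\mathbf{A})$--$(\mathbf{C})$. By Lemma \ref{3535} there is $\lambda_1\geq1$ such that, for any fixed $\mu\geq\lambda_1$ and every $\vartheta\in[0,\alpha)$, the operator $R_\mu:=(\mu I-\widehat{A})^{-1}$ is bounded from $L^p(\mathbb{R}^n)$ into $H^{\vartheta}_p(\mathbb{R}^n)$. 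Rearranging $A^{\ast}u=f$ into $(\mu I-\widehat{A})u=\mu u-\text{div}(b)\,u+\mathcal{S}u-f$ and applying $R_\mu$, the equation becomes the Fredholm equation
\[
(I-\mathcal{K})u=-R_\mu f,\qquad \mathcal{K}u:=R_\mu\bigl(\mu u-\text{div}(b)\,u+\mathcal{S}u\bigr),
\]
where $u\mapsto\mu u-\text{div}(b)\,u+\mathcal{S}u$ is bounded on $L^p$, $1<p<\infty$, by $(\mathbf{C})$ and Lemma \ref{singularoperator}.

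The crucial and most technical step --- the one I expect to be the main obstacle --- is to exhibit a Banach space $Z$ (the natural candidates being $\mathcal{H}$ of Definition \ref{CVdef}, $\mathcal{L}^2$, or $L^2$) on which $\mathcal{K}$ is compact, so that the Fredholm alternative applies to $I-\mathcal{K}$. The compactness would come from Lemma \ref{conpact}: one shows $\mathcal{K}$ sends bounded subsets of $Z$ into bounded subsets of $\mathcal{H}$, using on one hand the smoothing $R_\mu:L^p\to H^{\vartheta}_p$ with $\vartheta>1$ (together with the Sobolev embedding recalled in Section 2) to control the $H^1$ part of the $\mathcal{H}$-norm, and on the other hand the dissipativity $(\mathbf{D})$ through Lemmas \ref{GEME} and \ref{WL2}, which propagate the first-moment / $\mathbf{L}^1$ control to the resolvent and, interpolated against the $H^{\vartheta}_p$ estimate, control the $|x|$-weighted $L^2$ part. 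The constraints $p>2$, $p>\tfrac{n}{\alpha}+1$ and the full force of $(\mathbf{D})$ are what make these two controls simultaneously available and let $\mathcal{H}$ embed compactly back into $Z$; this bookkeeping is the heart of the lemma.

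Granting compactness, the Fredholm alternative gives that $I-\mathcal{K}$ has index $0$: $\ker(I-\mathcal{K})$ is finite dimensional and $\operatorname{Range}(I-\mathcal{K})$ is closed of codimension $\dim\ker(I-\mathcal{K})$. Transported through $R_\mu$, $\ker(I-\mathcal{K})$ is the space of $H^{\vartheta}_p$-solutions of $A^{\ast}w=0$, and this space is exactly one dimensional. It is nonzero since $p_{ss}$ belongs to it and, by Theorem \ref{regularity} together with Lemma \ref{3434}, $p_{ss}\in H^{\vartheta}_p(\mathbb{R}^n)$ for all $\vartheta\in[0,\alpha)$ and all $1<p<\infty$; this already establishes the second assertion with $u_{ss}:=p_{ss}$. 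It is no larger than one dimensional since any $L^p$ solution of $A^{\ast}w=0$ satisfies $P^{\ast}_tw=w$ for all $t$ and is therefore a constant multiple of $p_{ss}$, by the uniqueness of the invariant measure in Lemma \ref{1orderM}. Consequently, if $u_1$ and $u_2$ both solve (\ref{nonlocalff}) then $A^{\ast}(u_1-u_2)=0$ and $u_1-u_2=c\,u_{ss}$, which is the third assertion.

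It remains to produce, for $f$ as in the statement, at least one solution of (\ref{nonlocalff}). Here I would identify $\operatorname{Range}(A^{\ast})$ via the closed range theorem as the annihilator of the kernel of the formal adjoint of $A^{\ast}$ acting on the dual space; since bounded $A$-harmonic functions are constant (a Liouville/ergodicity statement again rooted in Lemma \ref{1orderM}), that kernel is one dimensional, spanned by the constant function, so solvability of $A^{\ast}u=f$ in $H^{\vartheta}_p$ reduces to a single scalar condition involving $\int_{\mathbb{R}^n}f\,dx$. I would then check that the hypotheses $f\in\mathcal{L}^p$ --- which makes the relevant pairings finite and keeps the subtracted multiple $(\int_{\mathbb{R}^n}f\,dx)\,p_{ss}$ inside the admissible class --- and $\int_{\mathbb{R}^n}f\,dx\neq0$ do place $f$ in that range, and finally upgrade the solution to $u\in H^{\vartheta}_p$ for every $\vartheta\in[0,\alpha)$ by bootstrapping with Theorem \ref{Regularitynonlocal} and Lemma \ref{3535}. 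Pinning down the exact compatibility condition and reconciling it with the hypothesis on $\int f$ is the one place in the existence part where I would proceed with particular care.
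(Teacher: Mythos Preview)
Your overall strategy---Fredholm alternative with compactness supplied by the weighted embedding of Lemma~\ref{conpact}---is exactly the paper's. The differences are in bookkeeping, and the paper's choices resolve precisely the points you flag as uncertain.

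First, instead of passing through $\widehat{A}=A-2b\cdot\nabla$ and the operator $\mathcal{K}=R_\mu(\mu\cdot-\text{div}(b)\cdot+\mathcal{S}\cdot)$, the paper takes $T:=(I-A^{\ast})^{-1}$ directly and rewrites $A^{\ast}u=f$ as $(I-T)u=-Tf$. The point is that Lemma~\ref{WL2} already shows $T$ is bounded on $\mathcal{L}^p$ itself, so there is no need to search for a space $Z$: the Fredholm argument runs entirely on $\mathcal{L}^p$. Theorem~\ref{Regularitynonlocal} gives $T:\mathcal{L}^p\to H^{\vartheta}_p\cap\mathcal{L}^p$, and the inclusion $H^{\vartheta}_p\cap\mathcal{L}^p\subset\mathcal{H}$ (checked via $\int|x||u|^2\le\|u\|_{L^\infty}\||x|u\|_{L^1}$, which is where $p>2\vee(\tfrac{n}{\alpha}+1)$ is used) together with Lemma~\ref{conpact} yields compactness. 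Your route through $R_\mu$ can be made to work, but you would have to redo the moment propagation of Lemma~\ref{WL2} for the $\widehat{A}$-semigroup rather than the $A^{\ast}$-semigroup; the paper avoids this detour.

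Second, to identify the adjoint kernel the paper invokes the maximum principle for the nonlocal operator $A$: if $w\in C^\infty_b$ solves $Aw=0$ then $w$ is constant, hence $N(I-T^{\ast})$ is the one-dimensional span of constants. Your appeal to ergodicity/Liouville is morally the same statement. Likewise, you argue $\dim N(I-T)=1$ via uniqueness of the invariant measure, while the paper gets it from $\dim N(I-T)=\dim N(I-T^{\ast})=1$; either works.

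Your closing caution about the compatibility condition is well placed: the Fredholm solvability condition that falls out is orthogonality to the constants, i.e.\ a condition on $\int_{\mathbb{R}^n}f\,dx$, and matching this with the hypothesis as stated requires care.
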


\begin{proof}\label{Exinh}
From Lemma \ref{WL2}, the inverse operator $T=(I-A^{\ast})^{-1}$ on $\mathcal{L}^p$ exists with $1\leq p \leq \infty$. Moreover, Theorem \ref{Regularitynonlocal} implies that $T$ is a linear bounded operator from $\mathcal{L}^p$ to $H^{\vartheta}_p\cap\mathcal{L}^p $ for all $\vartheta \in [0,\alpha)$.

Then the nonlocal elliptic equation $A^{\ast}u =f$ can be rewritten as
\begin{equation}\label{Fredholm}
    (I-T)u=h,
\end{equation}
where $h:=-Tf\in \mathcal{L}^p$ with $2\vee(\frac{n}{\alpha}+1)<p< \infty$.
We now claim that $T:\mathcal{L}^p\rightarrow \mathcal{L}^p$ is a bounded, linear, compact operator. By Sobolev embedding, $H^{\vartheta}_p(\mathbb{R}^n) \hookrightarrow H^1(\mathbb{R}^n)$ for some $\vartheta \in (1,\alpha)$. Moreover, since $p >\frac{n}{\alpha}+1$, we have $H^{\vartheta}_p(\mathbb{R}^n) \hookrightarrow L^{\infty}(\mathbb{R}^n)$ for some $\vartheta \in (\frac{n\alpha}{n+\alpha},\alpha)$. Then the H\"{o}lder inequality implies that
\begin{equation*}
    \int_{\mathbb{R}^n}|x||u(x)|^2dx \leq \|u\|_{L^{\infty}}\||x|u\|_{L^1}\leq C\|u\|_{\mathcal{L}^p}\|u\|_{H^{\vartheta}_p}.
\end{equation*}
Thus $(H^{\vartheta}_p \cap \mathcal{L}^p ) \subset \mathcal{H}$ with $\vartheta \in (1\vee \frac{n\alpha}{n+\alpha},\alpha)$, and
\begin{equation*}
    \|u\|_{\mathcal{H}} \leq C(\|u\|_{\mathcal{L}^p}+\|u\|_{H^{\vartheta}_p}).
\end{equation*}
From Lemma \ref{conpact}, $T:\mathcal{L}^p\rightarrow \mathcal{L}^p$ is a compact operator. So Fredholm alternative holds for the equation $(I-T)u=h$.

From Lemma \ref{3434}, the density of the unique invariant measure $p_{ss}\in \mathcal{L}^p$ with $1\leq p \leq \infty$.. It implies that the equation $(I-T)u=0$ has a nonzero solution $p_{ss} \in \mathcal{L}^p$. Then by Fredholm alternative theorem, $dim N(I-T)=dim N(I-T^{\ast})>0$. So when $f=0$, then $h=-Tf=0$ and there exists a nonzero solution $u\in N(I-T)$. Furthermore, for every $f \in \mathcal{L}^p$ with $f\neq 0$, the equation $A^{\ast}u=f$ has a solution $u_{ss} \in H^{\vartheta}_p\cap\mathcal{L}^p $ if and only if $(f,v)=\int_{\mathbb{R}^n}f(x)v(x) dx=0$ for all $v\in N(I-T^{\ast})$, where $T^{\ast}$ is a bounded linear operator from $(\mathcal{L}^p)^{\ast}$ to $(\mathcal{L}^p)^{\ast}$.

Now we describe the subspace $N(I-T^{\ast})$. By definition of $\mathcal{L}^p$, the smooth bounded function space $C^{\infty}_b(\mathbb{R}^n)$ is a dense subspace of $(\mathcal{L}^p)^{\ast}$. If $w\in C^{\infty}_b(\mathbb{R}^n)$ satisfies equation $(I-T^{\ast})w=0$, then $Aw=0$. By maximum principle of nonlocal elliptic operator $A$, if $w\in C^{\infty}_b(\mathbb{R}^n)$ is a solution to the equation $Aw=0$, then the solution $w(x)$ is a constant. Note that for every constant $c$, $w(x)=c$ is a solution of $Aw=0$, and all constant functions constitute a $1$-dimension linear subspace of $C^{\infty}_b(\mathbb{R}^n)$. We obtain that $N(I-T^{\ast})=\{f=c | c\in \mathbb{R} \ \text{is a constant}\}$, and the above condition $(f,v)=0$ for all $v\in N(I-T^{\ast})$ holds if and only if  $\int_{\mathbb{R}^n}f(x)dx\neq 0 $. Moreover, $dim N(I-T)=dim N(I-T^{\ast})=1$, thus $N(I-T)=\text{span}\{u_{ss}\}$. So if $u_1$, $u_2$ are both nonzero solutions of above equation, then $u_1 -u_2 =cu_{ss}$, where $c$ is a constant. We complete the proof.
\end{proof}

Now we define the conjugate variable of perturbation $F(t)K(x)$ as following:
\begin{definition}\label{CVCV}
The conjugate variable $U(x)$ of perturbation $F(t)K(x)$ is defined by
\begin{equation*}
    U(x)=\frac{v(x)}{p_{ss}(x)},
\end{equation*}
where $v(x)$ is a solution of following nonlocal elliptic equation
\begin{equation*}
    A^{\ast}v(x)=L^{\ast}p_{ss}(x).
\end{equation*}
\end{definition}

Now we state the linear response theory for SDE (\ref{Lequ}).
\begin{theorem}\label{LRThm}
(Linear response theory) Assume that $({\bf A})$-$({\bf D})$ hold, $F(t)\in L^{\infty}(\mathbb{R}^{+})$, $\text{div}(K(x)) \in  L^{\infty}(\mathbb{R}^n)$, $|x|K(x) \in L^{\infty}(\mathbb{R}^n)$, and $\int_{\mathbb{R}^n}\text{div}(K(x)p_{ss}(x)) dx \neq 0$. Then for every observable $O(x) \in L^{p}(\mathbb{R}^n)$ for some $1<p\leq\infty$, there exists a conjugate variable $U$ defined as Definition (\ref{CVCV}), such that
\begin{equation*}
    \mathcal{R}_{O}(t)=\frac{d}{dt}\mathbb{E}( O(X_t)U(X_{0})).
\end{equation*}
\end{theorem}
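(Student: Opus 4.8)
The plan is to construct the conjugate variable $U$ by solving the stationary nonlocal equation $A^{\ast}v=L^{\ast}p_{ss}$ via Lemma \ref{LLPP}, set $U=v/p_{ss}$, and then differentiate the correlation function $t\mapsto\mathbb{E}(O(X_t)U(X_0))$, matching the result against the formula already derived for the response function $\mathcal{R}_O$.

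First I would verify that $f:=L^{\ast}p_{ss}=-\mathrm{div}(K(x)p_{ss}(x))$ meets the hypotheses of Lemma \ref{LLPP}. By Theorem \ref{regularity} and Lemma \ref{3434}, $p_{ss}>0$ and $p_{ss}\in H^{\vartheta}_p\cap\mathcal{L}^p$ for all $\vartheta\in[0,\alpha)$ and $1<p<\infty$; writing $\mathrm{div}(Kp_{ss})=(\mathrm{div}K)p_{ss}+K\cdot\nabla p_{ss}$, the boundedness of $\mathrm{div}K$ together with the weighted bound $|x|K\in L^{\infty}$ and these regularity facts yield $f\in\mathcal{L}^p$ for some $2\vee(\tfrac{n}{\alpha}+1)<p<\infty$, while the hypothesis $\int_{\mathbb{R}^n}\mathrm{div}(K(x)p_{ss}(x))\,dx\neq0$ is precisely $\int f\neq0$. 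Hence Lemma \ref{LLPP} produces a nonzero $v\in H^{\vartheta}_p$, $\vartheta\in[0,\alpha)$, solving $A^{\ast}v=L^{\ast}p_{ss}$; since $p_{ss}$ is strictly positive and, by Sobolev embedding, $p_{ss},v$ are bounded and continuous, $U:=v/p_{ss}$ is well defined, and $Up_{ss}=v$ lies in every $L^{p'}$, so $\mathbb{E}(O(X_t)U(X_0))$ is finite for $O\in L^p$.

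Next I would compute the time derivative. Stationarity gives
\[
\mathbb{E}(O(X_t)U(X_0))=\int_{\mathbb{R}^n}(P_tO)(x)\,U(x)\,p_{ss}(x)\,dx=\int_{\mathbb{R}^n}(P_tO)(x)\,v(x)\,dx .
\]
For $t>0$, Lemma \ref{EXuBack} furnishes $P_tO\in H^{\vartheta}_p$ with $\vartheta\in(1,\alpha)$ and the backward identity $\partial_t P_tO=AP_tO$ in $L^p$; pairing with $v$ and using the duality between the generator $A$ and its adjoint $A^{\ast}$ (with $A^{\ast}v=L^{\ast}p_{ss}\in L^{p'}$) I obtain
\[
\frac{d}{dt}\int_{\mathbb{R}^n}(P_tO)\,v\,dx=\int_{\mathbb{R}^n}(AP_tO)\,v\,dx=\int_{\mathbb{R}^n}(P_tO)\,A^{\ast}v\,dx=\int_{\mathbb{R}^n}(P_tO)\,L^{\ast}p_{ss}\,dx .
\]
By the response-function formula and the adjoint relation $\langle P_tO,g\rangle=\langle O,P^{\ast}_tg\rangle$, the last integral equals $\int_{\mathbb{R}^n}O\,P^{\ast}_t(L^{\ast}p_{ss})\,dx=\mathcal{R}_O(t)$; equivalently it equals $\int(P_tO)\,Y\,p_{ss}\,dx=\mathbb{E}(O(X_t)Y(X_0))$ with $Y$ as in Theorem \ref{AFDT}. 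This is the assertion.

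I expect the main obstacle to be making the differentiation and the ``integration by parts'' $\int(AP_tO)v=\int(P_tO)A^{\ast}v$ rigorous: one must identify the $L^p$-generator of the Feller semigroup with the weak adjoint $A^{\ast}$ used in Lemma \ref{LLPP}, check that all pairings stay in dual $L^p$ spaces (this is where $p_{ss}\in H^{\vartheta}_p$ for every $p<\infty$, $\vartheta<\alpha$, and the weighted bound on $K$ are essential), and control the $t\downarrow0$ singularity of $\partial_t P_tO$ so that the identity, first proved for $t>0$, extends as stated. A clean way to organize this is to write $\langle P_{t+h}O-P_tO,v\rangle=\int_t^{t+h}\langle P_sO,A^{\ast}v\rangle\,ds$ using the semigroup property and Lemma \ref{EXuBack}, then divide by $h$ and pass to the limit using continuity of $s\mapsto\langle P_sO,L^{\ast}p_{ss}\rangle$.
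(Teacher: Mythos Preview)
Your proposal is correct and follows essentially the same route as the paper: construct $v$ via Lemma~\ref{LLPP} after checking $L^{\ast}p_{ss}\in\mathcal{L}^p$, set $U=v/p_{ss}$ (well defined since $p_{ss}>0$ by Lemma~\ref{3434}), and then differentiate the correlation $\int(P_tO)\,v\,dx$ to recover $\int O\,P^{\ast}_t(L^{\ast}p_{ss})\,dx=\mathcal{R}_O(t)$. The only organizational difference is that the paper first passes to the adjoint side, writing $\int(P_tO)\,v=\int O\,P^{\ast}_tv$ and invoking the generator identity $\tfrac{d}{dt}P^{\ast}_tv=P^{\ast}_tA^{\ast}v$ (justified via Lemma~\ref{Lpsemigroup}), whereas you differentiate $P_tO$ on the forward side and then integrate by parts; these are dual forms of the same one-line computation, and your suggested integral formulation $\langle P_{t+h}O-P_tO,v\rangle=\int_t^{t+h}\langle P_sO,A^{\ast}v\rangle\,ds$ is in fact a cleaner way to handle the domain issue you flag than what the paper writes.
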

\begin{proof}
By theorem \ref{regularity}, $p_{ss} \in H^{\vartheta}_p(\mathbb{R}^n)$ for all $1<p<\infty$ and $\vartheta\in [0,\alpha)$. By $K(x)\in H^1_{\infty}(\mathbb{R}^n)$ and Sobolev embedding, $L^{\ast}p_{ss}=-\text{div}(K(x)p_{ss}(x))\in L^p(\mathbb{R}^n)$ with $1\leq p \leq \infty$. Moreover, by Lemma \ref{3434}, $p_{ss} \in\mathcal{L}^p$ with $1\leq p \leq \infty$. Combing with $|x|K(x)\in L^{\infty}(\mathbb{R}^n)$, we have $L^{\ast}p_{ss} \in \mathcal{L}^p$ for all $1<p<\infty$.
Then Lemma \ref{Exinh} implies that there exists a solution $v(x)$ of equation $A^{\ast}v(x)=L^{\ast}p_{ss}(x)$. Since $p_{ss}(x)>0$ for all $x\in \mathbb{R}^n$, the conjugate variable $U(x)=v(x)/p_{ss}(x)$ exists.

From definition of the conjugate variable $U(x)$, we have
\begin{equation*}
    \mathbb{E}(  O(X_t)U(X_{0}))=\int_{\mathbb{R}^n}O(x)P^{\ast}_t v(x)dx.
\end{equation*}
From Lemma \ref{Lpsemigroup}, $P^{\ast}_t$ is a strongly continuous semigroup on $L^p(\mathbb{R}^n)$ for all $1\leq p \leq \infty$. Then by the dominated convergence theorem and definition of generator, we have
\begin{align*}
    \frac{d}{dt}\mathbb{E}(  O(X_t)U(X_{0}))& = \frac{d}{dt}\int_{\mathbb{R}^n}(P_t O(x))U(x)p_{ss}(x)dx\\
    & =\int_{\mathbb{R}^n} O(x)\frac{d}{dt}P^{\ast}_t (v(x))dx\\
     & =\int_{\mathbb{R}^n}O(x)P^{\ast}_t A^{\ast}v(x)dx \\
    & =\int_{\mathbb{R}^n}O(x)P^{\ast}_t (L^{\ast}p_{ss}(x)) dx \\
    & = \mathcal{R}_{O}(t).
\end{align*}
This completes the proof.
\end{proof}

\begin{remark}
The linear response theory for SDE (\ref{Lequ}) is also called Seifert-Speck type fluctuation-dissipation theorem \cite{SS2010}.
\end{remark}

\section{Conclusion}
We have established a linear response theory for the nonlinear stochastic differential equation driven by an $\alpha$-stable L\'{e}vy process ($1<\alpha<2$) under a perturbation $F(t)K(x)$ on the drift term. In addition, we have developed the Agarwal-type fluctuation-dissipation theorem for this stochastic system. Our results show the susceptibility of every observable under an small time-dependent perturbation when the system is close to the steady state.
During the proof,  we prove a new ergodicity results by the Bogoliubov-Krylov argument, the response function was also obtained by investigating the perturbation property of the corresponding Markov semigroup $P_t$. We also have shown existence and regularity results for the stationary Fokker-Planck equations by the a priori estimate.

There are still some limitations of our results. Due to the requirement of solvability and regularity of Kolmogorov backward equations for the corresponding SDEs, we restrict $1<\alpha<2$ in this paper. For the same reason, we only consider bounded drift term $b(x)$. In order to prove the existence of conjugate variable, it is important to assume that $|x|K(x)$ is bounded. We also ask that the perturbation can be written in the form $F(t)K(x)$, and the perturbation only applied on the drift term. These points will be the subjects of future work.


\begin{thebibliography}{31}

%\bibitem{Adams} R. A. Adams and  J. J. F. Fournier, {\it Sobolev spaces}, $2^{\rm nd}$ ed., Academic Press (2003).

%\bibitem {Wang} X. F. Wang, {\em On concentration of positive bound states of nonlinear Schr\"odinger equations}, Comm. Math. Phys. {\bf 153},  229--244 (1993).

%\bibitem {W} M. I. Weinstein, {\em Nonlinear Schr\"odinger equations and sharp interpolations estimates}, Comm. Math. Phys. {\bf 87}, 567--576 (1983).

\bibitem{A2004} Applebaum, D.: {\it L\'{e}vy Processes and Stochastic Calculus}. Cambridge University Press, Cambridge (2004).

\bibitem{BL1955} Bergmann, P. G., Lebowitz, J. L.: {\it New approach to nonequlibrium processes}. Phys. Rev. {\bf99}:578 (1955).

\bibitem{CG2008} Chetrite, R., Gawedzki, K.: {\it Fluctuation relations for diffusion processes}. Commun. Math. Phys {\bf282}, 469–518(2008).

\bibitem{CJ2020} Chen, X., Jia, C.: {\it Mathematical foundation of nonequilibrium fluctuation-dissipation theorems for inhomogeneous diffusion processes with unbounded coefficients},  Stochastic Process Appl. {\bf 130}, 171--202(2020).


\bibitem{CZ2018} Chen, Z., Zhang, X.: {\it Heat kernels for time-dependent non-symmetric stable-like operators}, J. Math. Anal. Appl. {\bf 465}, 1–21(2018)

\bibitem{D2015} Duan, J.: {\it An Introduction to Stochastic Dynamics}. Cambridge University Press, Cambridge (2015).

\bibitem{D2006} Da Prato, G.: {\it An Introduction to Infinite-Dimensional Analysis}, Springer-Verlag, Berlin, (2006).

\bibitem{DD2010} Dembo, A., Deuschel, J.D.: {\it Markovian perturbation, response and fluctuation dissipation theorem}, Ann. I. H. Poincare {\bf 46}, 822–852(2010).

\bibitem{CZZ2017} Chen, Z., Zhang, X., Zhao, G.: {\it Well posedness of supercritical SDEdriven by L\'{e}vy processes with irregular drifts}, Preprint arXiv:1709.04632 [math.PR] (2017)

\bibitem{Dy2012} Dybiec, B., Parrondo, J. M. R., Gudowska-Nowak, E.: {\it Fluctuation-dissipation relations under L\'{e}vy noises}. EPL {\bf 98}  50006(2012).

\bibitem{HS2001} Hatano, T., Sasa, S.: {\it Steady-State Thermodynamics of Langevin Systems}. Phys. Rev. Lett. {\bf 86},  3463-3466(2001).

\bibitem{LVS2015} Lisowski, B., Valenti, D., Spagnolo, B., Bier, M., Gudowska-Nowak, E.: { \it Stepping molecular motor amid L\'{e}vy white noise}. Phys. Rev. E , {\bf 91}, 042713(2015).

\bibitem{Kubo196} Kubo, R.: {\it The fluctuation-dissipation theorem}.  Rep. Prog. Phys. {\bf 29} , 255(1966).

\bibitem{KDG2018} Kusmierz, L., Dybiec, B., Gudowska-Nowak, E.: {\it Thermodynamics of Superdiffusion Generated by L\'{e}vy–Wiener Fluctuating Forces}. Entropy. {\bf 20}. 658(2018).

\bibitem{KESG2013} Kusmierz, L., Ebeling, W.,Sokolov, I. M., Gudowska-Nowak E.: {\it Onsagers fluctuation theory and new developments including non-equlibrium L\'{e}vy fluctuations}. Acta Phys. Pol. B {\bf 44} 859–80(2013)

\bibitem{KRS2008} Klages, R., Radons, G., Sokolov, I. M.: {\it Anomalous transport: Foundations and applications}. Wiley-VCH(2008).


\bibitem{P2016} Pavliotis, G. A.: {\it Stochastic Processes and Applications: Diffusion Processes, the Fokker-Planck and Langevin Equations}. Springer(2016).

\bibitem{RS1978}  Reed, M., Simon, B.: {\it Methods of Modern Mathematical Physics. IV. Analysis of Operators}, Academic Press, New York-London, 1978.

\bibitem{PJP2009} Prost, J., Joanny, J.-F., Parrondo, J. M. R.: {\it Generalized Fluctuation-Dissipation Theorem for Steady-State Systems}. Phys. Rev. Lett. {\bf 103}, 090601 (2009).

\bibitem{SNG2011} Sch\"{o}tz, T., Neher, R. A., Gerland, U.: {\it Target search on a dynamic DNA molecule}. Phys. Rev. E , {\bf 84}, 051911(2011).

\bibitem{SLDY2000} Schertzer, D., Larchevêque, M., Duan, J., Yanovsky, V. V., Lovejoy, S.: { \it Fractional Fokker-Planck equation for nonlinear stochastic differential equations driven by non-Gaussian L\'{e}vy stable noises}. J. Math. Phys, {\bf 42}, 200–212 (2001).

\bibitem{SS2010} Seifert, U., Speck, T.: {\it Fluctuation-Dissipation Theorem in Nonequilibrium Steady States}.  Euro. phys. Lett.{\bf 89}, 10007 (2010).

\bibitem{XZ2020} Xie, L., X, Zhang, X.: {\it Ergodicity of stochastic differential equations with jumps and singular coefficients}. Ann. Inst. H. Poincar\'{e} Probab. Statist.  56(1), 175-229(2020).

\bibitem{Z2001} Zwanzig, R.: {\it Nonequilibrium statistical mechanics}, Oxford University Press, Oxford (2001)


\bibitem{ZQQ2012} Zhang, X. J., Qian, H., Qian, M.: {\it Stochastic theory of nonequilibrium steady states and its applications. Part I}. Phys. Rep. {\bf 510}, 1–86(2012).


\end{thebibliography}
\end{document}